\documentclass{amsart}
\usepackage[utf8]{inputenc}
\usepackage{amsthm}
\usepackage{amsmath}
\usepackage{tikz}
\usepackage{comment}
\usetikzlibrary{arrows}

\usepackage{enumerate}   
\usepackage{amssymb}
\usepackage{fancyhdr} 
\usepackage[left=3cm,right=3cm,top=3cm,bottom=3cm]{geometry}
\usepackage{xcolor}
\newcommand{\norm}[1]{\left\lVert#1\right\rVert}
\usepackage{graphicx}
\usepackage{ esint }
\newtheorem{theorem}{Theorem}[section]

\newtheorem{corollary}[theorem]{Corollary}
\newtheorem{lemma}[theorem]{Lemma}
\newtheorem{definition}[theorem]{Definition}

\theoremstyle{remark}

\newcommand{\veps}{\varepsilon}
\newcommand{\CC}{\mathbb C}
\newcommand{\RR}{\mathbb R}
\newcommand{\DD}{\mathbb D}

\newcommand{\TT}{\mathbb T}
\newcommand{\NN}{\mathbb N}

\newcommand{\ovdd}{\overline{\DD}^2}
\newcommand{\ovdt}{\overline{\DD}^3}
\newcommand{\ovD}{\overline{\DD}^d}
\newcommand{\m}[1]{\vert #1 \vert}
\newcommand{\TTI}{\TT^{\m{I}}}
\newcommand{\tk}{\theta_k}
\newcommand{\rk}{\rho_k}

\newcommand{\vb}[1]{V_{\beta_#1}}
\newcommand{\sed}{S(\eta,\ovd)}
\newcommand{\wed}{W(\eta,\ovd)}
\newcommand{\eio}{e^{i\theta}}
\newcommand{\dss}{\displaystyle\sum}
\newcommand{\dsi}{\displaystyle\int}
\newcommand{\Ab}{A_\beta^2(\DD^3 )}
\newcommand{\tu}{\theta_1}
\newcommand{\td}{\theta_2}
\newcommand{\tr}{\theta_3}
\newcommand{\I}[1]{\lbrace 1,\dots,#1\rbrace}
\newcommand{\Si}{S_I(\eta,\ovd)}
\newcommand{\Abd}{A^2_\beta(\DD^d)}
\newcommand{\Abdi}[1]{A^2_{\beta_{#1}}(\DD^d)}
\newcommand{\wik}{\omega_{I,k}(\ovd)}
\newcommand{\dsp}{\displaystyle\prod}
\newcommand{\oii}{(0,1)^{\m{I}}}

\def\Imm{\mathrm{Im}}
\def\Ree{\mathrm{Re}}
\def\ovd{\overline{\delta}}

\usepackage{url}
\usepackage[hidelinks]{hyperref}
\hypersetup{
	colorlinks=true,
	linkcolor= cyan,
	filecolor=green,
	citecolor =blue,      
	urlcolor=purple,
}

\author{Anne Dorval}
\address{Laboratoire de Math\'ematiques Blaise Pascal UMR 6620 CNRS, Universit\'e Clermont Auvergne, Campus universitaire des C\'ezeaux, 3 place Vasarely, 63178 Aubi\`ere Cedex, France.}
\email{anne.dorval@uca.fr}

\thanks{The author is partially supported by the grant ANR-24-CE40-0892-01 of the French National Research Agency ANR}

\begin{document}

\title{Compactness of composition operator on weighted Bergman spaces of the polydisc}

\begin{abstract}
We study composition operators induced by a smooth symbol between weighted Bergman spaces of the polydisc. We first prove a compactness criterion that only requires knowing what happens on the distinguished boundary. Then we prove simple geometric characterizations of boundedness and compactness on some $\Abd$,  particularly for $\beta > d-3$.
 
\end{abstract}
\maketitle
\section{Introduction}
Let $\mathcal U $ be a domain in  $\CC^d$, let $X$ be a Banach space of holomorphic functions on $\mathcal{U}$ and let $\phi $ be a holomorphic self map of $\mathcal{U}$ (we will denote  $ \phi \in \mathcal{O}(\mathcal{U},\mathcal{U})) .$ Then the composition operator with symbol $\phi$ is defined by $C_\phi(f) = f \circ \phi, \ f \in X. $ The first question to study is about boundedness: for which symbols $\phi $ does $C_\phi$ induced a bounded operator on $X$ ? When $X$ is the Hardy space or a weighted Bergman space  of the unit disc, Littlewood's Subordination principle states that every analytic function on the disc defines a bounded operator. However, as the dimension grows the problem gets trickier. On weighted Bergman spaces of the bidisc a characterisation using only first order derivatives for smooth symbols up to the boundary was given in \cite{Ko22}. It gets a lot harder to completely characterise  boundedness on Bergman spaces of the tridisc (see for example \cite{Baytridisc}, \cite{BD26}  or \cite{Ko22}). Other authors have worked with affine symbols (see \cite{BAYPOLY},\cite{SZ06a} or \cite{SZ06b}) or rational inner functions (see \cite{Bes25a} \cite{Bes25b},\cite{Bes26} or \cite{Bes26b}) for example.
The next question to study is the one about compactness. On the Hardy space of the unit disc, compactness of composition operator was characterised by Shapiro in \cite{Shap} using the Nevanlinna counting function of $\phi.$ On the Hardy space of the bidisc or the tridisc, the question also gets trickier:  in \cite{Baytridisc}, Bayart only gives  a necessary and a sufficient condition for compactness for smooth symbols up to the boundary using properties on the derivative of $\phi$. In \cite{CCS25}, Clos, Čučković and Şahutoğlu give a geometric compactness criterion on the unweighted Bergman space of the bidisc for smooth symbols that are Lipshitz on the boundary, precisely: let $\phi \in \mathcal{O}(\DD^2,\DD^2)  $ be Lipschitz on the closure, then $C_\phi : A^2(\DD^2) \rightarrow A^2(\DD^2)$ is compact if and only if $\phi(\ovdd) \cap \TT^2 = \emptyset$ and $ \phi(\ovdd \setminus \TT^2) \cap b\DD^2 = \emptyset,$ where $b\DD^2$ denote the boundary of the bidisc. \\
In this paper, we investigate compactness of composition operators on weighted Bergman spaces of the polydisc of smooth symbol that are of class $\mathcal{C}^1$ up to the boundary. Our first result, Theorem \ref{result_gen_comp}, is a technical result that characterise  the compactness using only the values of $\phi $ on the polydisc. As a consequence, we obtain a stability result for compatness on weighted Bergman spaces. \\
In the second part, we take a more geometric approach to characterise the  boundedness (resp compactness) of composition operators on $\Abd$ when $\beta \geq d-3$ (resp $\beta >d-3$). As for the bidisc, our characterisation of boundedness only uses first order derivatives and the behaviour of $\phi$ near the distinguished boundary of $\DD^d$. For $d\geq 1$ and $ j\in \I{d}, $ we define  the set $T_{j,d}$ by $ \lbrace z \in \ovD : \exists I\subset\I{d},\ \m{I} = j,\  \m{z_k} = 1, \ \forall k \in I  \rbrace  $ and our result is: 
\begin{theorem}
Let $d \geq 2. $ Let $\beta \geq d-3$. Let $\phi \in \mathcal{O}( \DD^d,\DD^d) \cap \mathcal{C}^1(\ovD). $ Then, $C_\phi : A^2_{\beta}(\DD^d) \rightarrow A^2_{\beta}(\DD^d)$ is bounded if and only if 
\begin{enumerate}[(a)]
    \item For all $ j = 1,\dots,d-1,\ \phi(\ovD \setminus T_{j+1,d} ) \cap T_{j+1,d} = \emptyset$
    \item For all  $I \subset \I{d}, \ \m{I } = j$ and all $ \xi \in T_{j,d} \setminus T_{j+1,d}$ such that $ \phi_I(\xi) \in \TT^j $ then $ \left( \dfrac{\partial\phi_i(\xi)}{\partial z_k }\right)_{i \in I, \ k \in P_I} $is invertible. 
\end{enumerate}
\end{theorem}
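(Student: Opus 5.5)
The plan is to reduce boundedness to a Carleson-measure condition and then localise it near the points of $\ovD$ that $\phi$ sends to the distinguished parts of $b\DD^d$. First I fix notation: $P_I$ denotes the set of indices $k$ with $\m{\xi_k}=1$ (so $\m{P_I}=j$ when $\xi\in T_{j,d}\setminus T_{j+1,d}$). Write $dV_\beta(z)=\prod_k(1-\m{z_k}^2)^\beta dA(z_k)$. Then $C_\phi$ is bounded on $\Abd$ if and only if the pull-back measure $\mu_\phi=V_\beta\circ\phi^{-1}$ is a $\beta$-Carleson measure for $\Abd$. I would test this on products of Carleson boxes $S(\eta,\ovd)=\prod_k S(\eta_k,\delta_k)$ with $\eta\in\TT^d$, in the spirit of the bidisc argument of \cite{Ko22}. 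Since products of boxes do not suffice for the polydisc, I will instead use the sufficient condition $\mu_\phi(S(\eta,\ovd))\lesssim \prod\delta_k^{2+\beta}$ together with a Chang-type or dyadic argument on unions. This should be valid here because $\phi\in\mathcal C^1$ makes $\mu_\phi$ locally comparable to a pull-back of Lebesgue measure under a diffeomorphism.

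For sufficiency I would proceed as follows.
\begin{enumerate}[(1)]
\item By compactness of $\ovD$, cover it by finitely many neighbourhoods.
\item Away from $\phi^{-1}(b\DD^d)$ there is nothing to prove.
\item Near a point $\xi$ with $\phi_I(\xi)\in\TT^j$ and $\m{\phi_i(\xi)}<1$ for $i\notin I$, condition (a) forces $\xi\in T_{j,d}$. If $\xi\in T_{j+1,d}$, I would pass to a larger index set, so we may take $\xi\in T_{j,d}\setminus T_{j+1,d}$ with $\m{P_I}=j$.
\item Condition (b) lets me apply the inverse function theorem to $z_{P_I}\mapsto\phi_I(z)$ after an angular/radial reparametrisation. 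Write $\phi_i=e^{i\theta}(1-\rho)$; then $\mathcal C^1$ regularity and the Julia–Carathéodory inequality give $1-\m{\phi_i(z)}\gtrsim\max_{k\in P_I}(1-\m{z_k})$ and $\m{\arg\phi_i-\arg\eta_i}$ controlled linearly by the boundary variables.
\item Changing variables in the integral defining $\mu_\phi(S(\eta,\ovd))$ then gives the bound $\prod_{i\in I}\delta_i^{2+\beta}\cdot\prod_{i\notin I}\delta_i^{2+\beta}$.
\end{enumerate}
The factors with $i\notin I$ are harmless because $\m{\phi_i}$ stays bounded away from $1$ there. However, the variables $z_k$ with $k\notin P_I$ are integrated against a weight on which $\phi_I$ may still depend. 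This is where $\beta\ge d-3$ should enter: integrating out the remaining $d-j$ interior variables costs a loss that is absorbed exactly when $\beta+2\ge d-1$.

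For necessity, I would use normalized reproducing kernels $k_w/\norm{k_w}$ and the lower estimate $\norm{C_\phi^*k_w}\le\norm{C_\phi}\norm{k_w}$, that is $\prod_k\frac{(1-\m{w_k}^2)^{2+\beta}}{(1-\m{\phi_k(z)}^2)^{2+\beta}}$ bounded when tested appropriately. Equivalently, I would use the Carleson box lower bound. If (a) fails, some $z_0\notin T_{j+1,d}$ maps into $T_{j+1,d}$. Then $\phi$ sends a set of $z$ having at most $j$ unimodular coordinates onto $j+1$ boundary coordinates. A measure count near $z_0$ shows $\mu_\phi(S(\eta,\delta))\gtrsim\delta^{j(2+\beta)+c}$ with $c<2+\beta$, using only $\mathcal C^1$ (Lipschitz) bounds $1-\m{\phi_i(z)}\lesssim\m{z-z_0}$. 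This contradicts the Carleson condition. If (b) fails, the Jacobian block is singular, so the image of a small polybox around $\xi$ concentrates. One direction of the box is compressed, giving $\mu_\phi(S)\gg\prod\delta^{2+\beta}$ along a suitable sequence of boxes with one $\delta_i\ll$ the others.

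The main obstacle I expect is the sufficiency step near points of $T_{j,d}\setminus T_{j+1,d}$ with $j<d$. There, the partial invertibility in (b) must control $\phi_I$ uniformly while the other variables range over $\DD$, and the Carleson-measure theory of the polydisc is not box-based. I would first try to get around the latter by proving directly the integral estimate $\int\m{f\circ\phi}^2dV_\beta\lesssim\norm f^2$ via a local change of variables $w_I=\phi_I(z)$. This uses a Fubini argument slicing in $z_{P_I}$ and applies the one-variable Littlewood subordination (valid in every weight) in the remaining coordinates. The constraint $\beta\ge d-3$ would be needed to compare the weight $(1-\m{z_k}^2)^\beta$ with that of the image slices.
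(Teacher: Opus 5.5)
\textbf{The gap is in step (3) of your sufficiency argument.} At a point $\xi$ you must take $I=\{i:\ \m{\phi_i(\xi)}=1\}$, and this set is already maximal. So if $\xi\in T_{j+1,d}$ while $\m{I}=j$, there is no larger index set to pass to. Neither (a) nor (b) rules this situation out or controls it: (b) only speaks about points of $T_{j,d}\setminus T_{j+1,d}$.

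Note also that $P_I$ in the paper is not the set of unimodular coordinates of $\xi$. It is $\{k:\ \partial\phi_i(\xi)/\partial z_k\neq 0 \text{ for some } i\in I\}$. At the points you discard one can have $\m{P_I}>\m{I}$. Then the block $\bigl(\partial\phi_i(\xi)/\partial z_k\bigr)_{i\in I,\,k\in P_I}$ is not square, and no inverse function theorem is available.

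This is exactly where the threshold comes from. The map $\phi=(z_1\cdots z_d,\dots,z_1\cdots z_d,0)$ satisfies (a) and (b). At $\xi=(1,\dots,1)$ it has $\m{I}=d-1<\m{P_I}=d$, and $C_\phi$ is bounded on $A^2_\beta(\DD^d)$ if and only if $\beta\geq d-3$.

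Your explanation of where $\beta\ge d-3$ enters is therefore not right. In the case you keep ($\xi\in T_{j,d}\setminus T_{j+1,d}$ with an invertible $j\times j$ block):
\begin{itemize}
\item the coordinates outside $P_I$ are interior near $\xi$, so $V_\beta$ is comparable to Lebesgue measure in them;
\item $\theta_{P_I}\mapsto\arg\phi_I(\theta)$ is a local diffeomorphism, which gives $\lambda_d(\cdots)\lesssim\prod_{j\in I}\delta_j$;
\item by Lemma 2.9 of \cite{BD26} with $\m{P_I}=\m{I}$, this is at most $\prod_{j\in I}\delta_j^{2+\beta}/\prod_k\omega_{I,k}(\overline{\delta})^{1+\beta}$ for every $\beta\ge -1$.
\end{itemize}
Integrating out the interior variables costs nothing.

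\textbf{What is missing is an argument at the degenerate points $\m{I}<\m{P_I}$.} The paper treats $\beta=d-3$ through the localized criterion of Theorem 2.10 of \cite{BD26}. That is, it must show
\[
\lambda_d(\{\theta\in O:\ \m{\phi_j(\theta)-\eta_j}<\delta_j,\ j\in I\})\lesssim \prod_{j\in I}\delta_j^{d-1}\Big/\prod_{k=1}^d\omega_{I,k}(\overline{\delta})^{d-2}.
\]
At degenerate points it uses only the crude bound $\lambda_d(\cdots)\lesssim\min_{j\in I}\delta_j$ (Lemma 2.17 of \cite{BD26}), combined with Lemma \ref{lemme_cont}. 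Since $\prod_k\omega_{I,k}(\overline{\delta})\le\prod_{j\in I}\delta_j^{\m{P_I}/\m{I}}$, the right-hand side is at least $\min_j\delta_j$ as soon as $d\ge 1+\frac{\m{P_I}-1}{\m{P_I}-\m{I}}$. This always holds when $\m{I}<\m{P_I}\le d$, and it is sharp for $\m{P_I}=d$, $\m{I}=d-1$. The exponents $d-1=2+\beta$ and $d-2=1+\beta$ are precisely where $\beta=d-3$ is used. The case $\beta>d-3$ then follows from the stability result, Theorem 2.13 of \cite{BD26}.

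\textbf{A smaller gap is in your necessity proof for (a).} Let $P$ be the set of unimodular coordinates of $z_0$. A Lipschitz neighbourhood of $z_0$ in all $d$ coordinates only has volume $\approx\delta^{\m{P}(2+\beta)+2(d-\m{P})}$. This does not beat $\delta^{(j+1)(2+\beta)}$ in general; for example it fails for $d=2$ and $\beta\le 0$. You need the maximum principle to see that $\phi_I$ is constant in the interior coordinates of $z_0$. The preimage then contains a product set that is full in those coordinates, as in the paper.
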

From this result, we deduce a compactness criterion which only requires knowing the behaviour of $\phi$ on the polycircle. It generalize Clos, Čučković and Şahutoğlu result to all dimensions.

\begin{theorem}
Let $d \geq 2$. Let $\phi \in \mathcal{O}(\DD^d ,\DD^d) \cap \mathcal{C}^1(\ovD). $ Let $\beta >d-3.$ Then 
$C_\phi : \Abd \rightarrow \Abd$ \text{ is compact  if and only if } $\forall j \in \I{d},\ \phi(\ovD\setminus T_{j+1,d} ) \cap T_{j,d} = \emptyset. $ 

\end{theorem}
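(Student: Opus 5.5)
We need a plan for the compactness theorem. Note the condition includes $j=d$, where $T_{d+1,d}=\emptyset$, so it reads $\phi(\ovD)\cap\TT^d=\emptyset$. For $j<d$ it reads $\phi(\ovD\setminus T_{j+1,d})\cap T_{j,d}=\emptyset$: a point of the closed polydisc with at most $j$ unimodular coordinates is never sent to a point with at least $j$ unimodular coordinates.

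The plan is to combine Theorem \ref{result_gen_comp} (compactness read off from the distinguished boundary behaviour) with the boundedness theorem just stated, after an approximation trick. Since $\beta>d-3$, we can pick $\beta'$ with $d-3\le\beta'<\beta$. The stability consequence of Theorem \ref{result_gen_comp} should then give the following: if $C_\phi$ is bounded on $A^2_{\beta'}(\DD^d)$, then it is compact on $\Abd$. I expect the mechanism to be interpolation or a Carleson-measure comparison, because the extra weight $(1-|z|^2)^{\beta-\beta'}$ gives a vanishing factor near the boundary.

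\emph{Sufficiency.} Assume the geometric condition holds for all $j$.
\begin{itemize}
\item Condition (a) of the boundedness theorem follows at once, since $T_{j+1,d}\subset T_{j,d}$.
\item Condition (b) is vacuous for $j=d$, because $\phi(\ovD)\cap\TT^d=\emptyset$.
\item For $j<d$ and $\xi\in T_{j,d}\setminus T_{j+1,d}$, we have $\xi\notin T_{j+1,d}$. Hence $\phi(\xi)\notin T_{j,d}$, so in particular $\phi_I(\xi)\notin\TT^j$, and (b) is again vacuous.
\end{itemize}
So $C_\phi$ is bounded on $A^2_{\beta'}(\DD^d)$ for every $\beta'\ge d-3$. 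Choosing $\beta'\in[d-3,\beta)$ and applying the stability result gives compactness on $\Abd$. If the stability statement in the paper instead takes the form ``compact on $A^2_{\beta'}$ implies compact on $\Abd$'', I would argue directly from Theorem \ref{result_gen_comp}. Its criterion presumably asks that a local quantity of Carleson type, built from $\phi$ near points of $\TT^d$ and tested near $\phi^{-1}$ of the boundary strata, tends to $0$. The geometric condition forces $\phi$ near the relevant strata to lose at least one unimodular coordinate, and with the $\mathcal C^1$ regularity this should give a gain of a power of $(1-|z_k|)$, hence a vanishing limit.

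\emph{Necessity.} Suppose there exist $j$ and $\xi\in\ovD\setminus T_{j+1,d}$ with $\phi(\xi)\in T_{j,d}$. Choose $I$ with $|I|=j$ and $|\phi_i(\xi)|=1$ for all $i\in I$.
\begin{itemize}
\item If $\xi$ has at most $j-1$ unimodular coordinates, the image has strictly more unimodular coordinates than the source. I would show, as in the proof of necessity of (a), that then $C_\phi$ is not even bounded on $A^2_{\beta''}$ for $\beta''$ close to $\beta$, and a fortiori not compact.
\item If $\xi\in T_{j,d}\setminus T_{j+1,d}$, the unimodular coordinates are preserved in number. Test against the normalized reproducing kernels $k_w/\|k_w\|$ with $w\to\phi(\xi)$, or with $w=\phi(\xi_n)$ for $\xi_n\to\xi$ inside $\DD^d$. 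These tend weakly to $0$, and I want to show that $\|C_\phi^*(k_w/\|k_w\|)\|=\|k_{\phi(\xi_n)}\|/\|k_{\xi_n}\|$, choosing $w=\phi(\xi_n)$, stays bounded below.
\end{itemize}
In coordinates in $I$, the $\mathcal C^1$ regularity together with the Julia--Carathéodory-type inequality $1-|\phi_i(\xi_n)|\lesssim \sum_{k}(1-|\xi_{n,k}|)$ gives a comparison of kernel norms. The kernel of $A^2_\beta(\DD^d)$ is a product, so the $j$ coordinates approaching the circle contribute $\prod_{i\in I}(1-|\phi_i|)^{-(\beta+2)}$ against the $\xi_n$ side.

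The main obstacle is this last comparison. The number of coordinates of $\xi_n$ tending to the circle is only guaranteed to be $j$, and we must approach $\xi$ along a path where $1-|\phi_i(\xi_n)|\lesssim 1-|\xi_{n,k}|$ pairwise, so that the ratio of kernel norms is bounded below. I would choose $\xi_n$ radially, $\xi_n=r_n\xi$ in the unimodular coordinates, and use that each $\phi_i$, $i\in I$, maps into $\DD$ with $|\phi_i(\xi)|=1$. By the Hopf lemma and Julia's lemma, $1-|\phi_i(r\xi)|\le C(1-r)$. This yields the uniform lower bound and contradicts compactness.
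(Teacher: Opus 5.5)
\textbf{There is a genuine gap in the sufficiency direction.}

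Your check that the geometric condition implies the hypotheses of Theorem~\ref{thm_cont_geom} is correct; it is the paper's Corollary~\ref{condition_suff_cont}. The next step is the problem. You claim that boundedness on $A^2_{\beta'}(\DD^d)$ for some $\beta'<\beta$ implies compactness on $\Abd$. This is false in general: $\phi(z)=z$ gives the identity, which is bounded on every weighted space and compact on none. The factor $(1-|z|^2)^{\beta-\beta'}$ appears both in the pulled-back measure of $\phi^{-1}(S(\eta,\ovd))$ and in the normalisation $\prod_j\delta_j^{2+\beta}$. So passing from $\beta'$ to $\beta$ only multiplies the Carleson ratio by roughly $\bigl(\prod_{k}\omega_{I,k}(\ovd)/\prod_{j\in I}\delta_j\bigr)^{\beta-\beta'}$. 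Boundedness keeps this factor bounded (Lemma~\ref{lem:wikcontinu}), but nothing you have said makes it small. Theorem~\ref{thm:stability} cannot rescue this: it transfers compactness to compactness, not boundedness to compactness, and $C_\phi$ is typically not compact on $A^2_{d-3}(\DD^d)$ (see the example $\varphi=z_1\cdots z_d$ after the theorem). Your fallback states the right intuition, but it does not say which quantity gains the power, nor where the Carleson bound it multiplies comes from.

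\textbf{The missing ingredient} is a quantitative form of ``the source has strictly more unimodular coordinates than the image''. The hypothesis forces $|P_J|>|J|$ for every $J\subset I$ whenever $\phi_I$ touches the polycircle; here $|I|\le d-1$ because $\phi(\TT^d)\cap\TT^d=\emptyset$. An induction over the ordered $\delta_j$ (Lemma~\ref{lemme_recurrence}) turns this into
\[
\prod_{k=1}^d\omega_{I,k}(\ovd)\le\min_{j\in I}\delta_j\prod_{j\in I}\delta_j
\]
(Corollary~\ref{lemme_comp}). The paper combines this with the $A^2_{d-3}$ Carleson estimate that follows from boundedness (Theorem 2.10 of \cite{BD26}). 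Writing $E=\{\theta:|\phi_j(\theta)-\eta_j|<\delta_j,\ j\in I\}$, one gets
\[
\lambda_d(E)\le C\,\frac{\prod_{j\in I}\delta_j^{d-1}}{\prod_{k}\omega_{I,k}(\ovd)^{d-2}}
= C\Bigl(\frac{\prod_{k}\omega_{I,k}(\ovd)}{\prod_{j\in I}\delta_j}\Bigr)^{\beta-d+3}\frac{\prod_{j\in I}\delta_j^{2+\beta}}{\prod_{k}\omega_{I,k}(\ovd)^{1+\beta}}.
\]
The prefactor is at most $(\min_{j\in I}\delta_j)^{\beta-d+3}\to 0$, which is condition (b) of Theorem~\ref{result_gen_comp}. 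This is exactly where both $\beta>d-3$ and the geometric condition are used.

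\textbf{Your necessity argument is valid} and takes a more elementary route than the paper. The paper goes through Corollary~\ref{coro_noncom}, a first-order expansion on the torus, and $|P_I|\le|I|$. Two corrections to your version:
\begin{itemize}
\item The test vectors must be $K_{\xi_n}/\|K_{\xi_n}\|$, kernels at $\xi_n$ rather than at $\phi(\xi_n)$, with $C_\phi^*K_{\xi_n}=K_{\phi(\xi_n)}$.
\item The bound $1-|\phi_i(\xi_n)|\lesssim 1-r_n$ follows directly from Lipschitz continuity of $\phi\in\mathcal C^1(\ovD)$. The Hopf lemma gives the reverse inequality.
\end{itemize}
If $\xi$ has $m$ unimodular coordinates and you shrink exactly those, you get $\|K_{\phi(\xi_n)}\|/\|K_{\xi_n}\|\gtrsim(1-r_n)^{(m-j)(2+\beta)/2}$. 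This is bounded below whenever $m\le j$, so both of your subcases are handled at once, without appealing to the boundedness theorem.
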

Finally, in the last part, we provide several examples illustrating  our results. 

\section{Some general results}
\subsection{Notations}Let us start by introducing some notations that will be used throughout this paper. The unit vector $(1,\dots,1)$ will be denoted by $e$. 
For $\theta=(\theta_1,\dots,\theta_d)\in\RR^d,$ we will write $e^{i\theta}$ for $(e^{i\theta_1},\dots,e^{i\theta_d}).$ Any $z\in\overline{\DD}^d$ such that $z_k\neq 0$ for all $k$ will be uniquely written $((1-\rho_k)e^{i\theta_k})$
with $\theta_k\in[-\pi,\pi)$ and $\rho_k\in[0,1).$ 

For  $f : \TT^d \rightarrow A$  and $\theta \in \RR^d$, we write $f(\theta)$ for $f(e^{i\tu},\dots,e^{i\theta_d})$. 
If $f$ maps $A$ into $\CC^d,$ the map $f_I$, where $I=\{i_1,\dots,i_p\}\subset \{1,\dots,d\}$, will denote $(f_{i_1},\dots,f_{i_p}):A\to\CC^p.$ 
For $f,g : A \rightarrow \RR$, we shall write ${f\lesssim g} $ provided there exists $C>0$ such that for all $x \in A,\ f(x) \leq Cg(x).$ When we write $f\simeq g,$ we mean $f\lesssim g$ and $g\lesssim f$.

The Lebesgue measure on $\RR^d$ will be denoted by  $\lambda_d$ and the normalized surface measure on $\TT^d$ will be denoted by $\sigma_d$. Let $dA$ denote the normalized area measure on $\DD$. For $\beta >-1$, we put $dA_\beta(z) = (\beta+1)(1-\m{z}^2)^\beta dA(z)$. On the polydisc $\DD^d$, we define 
$$dV_\beta(z) = dA_\beta(z_1)\cdots dA_\beta(z_d), \hspace{5mm} z=(z_1,\dots,z_d) \in \DD^d.$$
For $\beta>-1,$ the Bergman space $\Abd$ is the function space defined by 
$$ \Abd = \left\lbrace f \in \mathcal{O}(\DD^d,\DD^d):\ \norm{f}^2_2 = \int_{\DD^d}\m{f(z)}^2dV_\beta(z)<+\infty \right\rbrace,$$ and the Hardy space $H^2(\DD^d)$ is defined by 
$$ H^2(\DD^d) = \left\lbrace f \in \mathcal{O}(\DD^d,\DD^d):\  \norm{f}^2_2 = \sup\limits_{0<r<1}\int_{\TT^d}\m{f(re^{i\theta})}^2d\sigma_d(z) <+\infty \right\rbrace.$$ 
For simplicity, we will sometimes write $A^2_{-1}(\DD^d)$ for $H^2(\DD^d).$ 

For $\eta \in \TT^d$ and $\ovd \in (0,2]^d$, the Carleson box is the set $${\sed = \lbrace z \in \DD^d :\ \m{z_j-\eta_j}<\delta_j,\ j=1,\dots,d\rbrace}.$$ If $I\subset \{1,\dots,d\}$, $S_I(\eta,\overline\delta)$
will mean $\lbrace z \in \DD^d :\ \m{z_j-\eta_j}<\delta_j,\ j\in I\rbrace$. \\
For $\eta = (e^{i\tu},\dots,e^{i\theta_d}) \in \TT^d$ and $\ovd \in (0,2]^d$, the Carleson window is the set $${\wed = \lbrace  (r_ke^{it_k})_{k=1,\dots,d} \in \DD^d :\ 1-\delta_k \leq r_k \leq 1, \ \m{\theta_k-t_k} \leq \delta_k, \ k=1,\dots,d\rbrace}.$$
Since  there exists $c>0$ such that, for all $\eta \in \TT^d $ and all $\ovd \in (0,2]^d,$ 
$$ \sed \subset \wed \subset S(\eta,c\ovd),$$ 
in some results that we will state, we may replace $\sed$ by $\wed,$ and conversely. 

\subsection{Characterisation of compactness }
First,  we characterise the compactness of $C_\phi : \Abdi{1} \rightarrow \Abdi{2}$ with a condition on the measure using Carleson windows as it was done for the continuity using Carleson boxes. It is what the two following theorems are all about. The condition varies depending on whether $\Abdi{1}$ is the Hardy space or not. The following theorem is a precise formulation of Theorem $2.5$ of \cite{Jaf91} :   

\begin{theorem}\label{caract_compacite_mesure}
\begin{enumerate}[(a)]
    \item Let $\beta>-1$ and $I_\beta: A^2_\beta(\DD^d) \rightarrow L^2(\mu) $ be the identity map. Then $I_\beta$ is a compact operator if and only if $$\forall \veps>0, \hspace{2mm} \exists \gamma >0,\hspace{2mm} \forall \ovd \in (0,2]^d,\hspace{2mm} \forall \eta \in \TT^d,\hspace{2mm} \min\limits_{i = 1,\dots,d}\delta_i \leq\gamma\hspace{2mm} \Rightarrow \hspace{2mm}\dfrac{\mu(W(\eta,\ovd))}{\prod\limits_{j=1}^d \delta_j^{2+\beta}} \leq \veps . $$ 
    
    \item Let $I_\beta : H^2(\DD^d) \rightarrow L^2(\mu)$ be the identity map. If $I_\beta$ is a compact operator then
$$\forall \veps>0, \hspace{2mm} \exists \gamma >0,\hspace{2mm} \forall \ovd \in (0,2]^d,\hspace{2mm} \forall \eta \in \TT^d,\hspace{2mm} \min\limits_{i = 1,\dots,d}\delta_i \leq\gamma\hspace{2mm} \Rightarrow \hspace{2mm}\dfrac{\mu(W(\eta,\ovd))}{\prod\limits_{j=1}^d \delta_j }\leq \veps . $$ 

\end{enumerate}

\end{theorem}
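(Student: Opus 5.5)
We prove the two directions of (a) separately and obtain (b) from the necessity argument of (a), with $\gamma=0$ in the weight (i.e.\ $\beta=-1$ formally). Throughout we use the normalized reproducing kernels
$$k_a(z)=\prod_{j=1}^d \frac{(1-|a_j|^2)^{(2+\beta)/2}}{(1-\overline{a_j}z_j)^{2+\beta}},\qquad a\in\DD^d,$$
for which $\|k_a\|_{A^2_\beta}\simeq 1$. Moreover, $k_a\to 0$ weakly as soon as $\min_j(1-|a_j|)\to0$. Indeed, the kernel factorizes, and the coordinate $j$ with $|a_j|\to1$ gives a one-variable normalized kernel tending weakly to $0$, while the other factors stay bounded in norm. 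Pairing with polynomials then suffices.

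\emph{Necessity.} Assume $I_\beta$ is compact and suppose the condition fails. Then there are $\veps>0$, $\eta^{(n)}\in\TT^d$ and $\ovd^{(n)}$ with $\min_i\delta^{(n)}_i\to0$ and $\mu(W(\eta^{(n)},\ovd^{(n)}))>\veps\prod_j(\delta_j^{(n)})^{2+\beta}$. Set $a^{(n)}_j=(1-\delta^{(n)}_j)\eta^{(n)}_j$. The coordinates with $\delta_j$ close to $2$ need care: there we cap $|a_j|$ at $1/2$, which only changes constants. On $W(\eta,\ovd)$ each factor satisfies $|1-\overline{a_j}z_j|\lesssim\delta_j$, so $|k_a(z)|^2\gtrsim\prod_j\delta_j^{-(2+\beta)}$. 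Hence
$$\|k_{a^{(n)}}\|_{L^2(\mu)}^2\gtrsim \frac{\mu(W(\eta^{(n)},\ovd^{(n)}))}{\prod_j(\delta_j^{(n)})^{2+\beta}}>\veps .$$
But $k_{a^{(n)}}\to0$ weakly, and compact operators map weakly null sequences to norm null ones, a contradiction. The same argument with the Szeg\H{o} kernels (exponent $1$) proves (b).

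\emph{Sufficiency.} This is the harder direction. First, the hypothesis implies that $\mu$ is a Carleson measure for $\Abd$. Windows with all $\delta_i\ge\gamma$ are handled by the finite total mass of $\mu$, which follows by covering $\DD^d$ with a single window. Then we split $\mu=\mu_r+\nu_r$ with $\mu_r=\mu|_{r\overline{\DD}^d}$. The restriction $f\mapsto f|_{r\overline\DD^d}$ into $L^2(\mu_r)$ is compact, since bounded sets of $\Abd$ are normal families converging uniformly on compacts. It therefore suffices to show $\|I\|_{A^2_\beta\to L^2(\nu_r)}\to0$ as $r\to1$. Any window meeting $\DD^d\setminus r\overline\DD^d$ intersects $\operatorname{supp}\nu_r$ only if it reaches a coordinate with $|z_j|>r$. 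One then checks that $\nu_r(W(\eta,\ovd))\le C\veps\prod_j\delta_j^{2+\beta}$ for all windows, once $r$ is close enough to $1$. Windows with some small $\delta_j$ are covered by the hypothesis directly. For windows with all $\delta_j\ge\gamma$, we cover the part of the window in $\{|z_j|>r\}$ by boundedly many (depending on $\gamma$) windows whose $j$-th side is $1-r\le\gamma$, and apply the hypothesis to each.

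The main obstacle is then to deduce a small operator norm from a small window constant for $\nu_r$. On the polydisc this is not automatic: the one-box Carleson condition does not characterise Carleson measures for $H^2(\DD^d)$ (Carleson's counterexample), which is exactly why (b) is only stated as a necessary condition. For $\beta>-1$, however, it does, and quantitatively: the norm of the embedding is at most $C_d$ times the window constant, as in Jafarian's theorem \cite{Jaf91}, which we invoke. To prove it directly, I would use the subharmonic mean value inequality. We bound $|f(z)|^2$ by its $V_\beta$-average over a product of hyperbolic discs $D(z)$ of comparable size. By Fubini,
$$\int|f|^2d\nu_r\lesssim\int|f(w)|^2\frac{\nu_r(\widetilde D(w))}{V_\beta(\widetilde D(w))}\,dV_\beta(w),$$
where $\widetilde D(w)$ is a product of slightly larger hyperbolic discs. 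Each $\widetilde D(w)$ is contained in a window with $\delta_j\simeq1-|w_j|$ and $V_\beta(\widetilde D(w))\simeq\prod_j\delta_j^{2+\beta}$, so the ratio is $\lesssim C\veps$. This yields $\|I\|_{L^2(\nu_r)}^2\lesssim\veps$, and compactness follows as a norm limit of compact operators.
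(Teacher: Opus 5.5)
The paper gives no argument for this statement. It presents it as a reformulation of Theorem~2.5 of \cite{Jaf91} and cites it. Your proposal actually derives it, and the derivation is sound.

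\emph{Necessity.} Testing on normalized kernels works. On the $j$-th factor of $W(\eta,\ovd)$ you have $|1-\overline{a_j}z_j|\le 3\delta_j$ and $1-|a_j|^2\ge\delta_j$. Also $k_a\to0$ weakly, since $|\langle p,k_a\rangle|\le\|p\|_\infty\prod_j(1-|a_j|^2)^{(2+\beta)/2}$ for polynomials $p$. The same argument with Szeg\H{o} kernels gives (b).

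\emph{Sufficiency.} You split off $\mu|_{r\overline{\DD}^d}$, which gives a compact piece by normal families. You then upgrade the small-window hypothesis to a window condition with constant $C\veps$ for all windows for $\nu_r$. This uses the same angular covering the paper uses in the proof of Theorem~\ref{comp_Hardy_Bergman}. Finally, for $\beta>-1$ the embedding norm is controlled by the window constant (Jafarian/Hastings, or the subharmonic averaging you sketch).

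Compared with the bare citation, your route makes explicit how the ``$\min_i\delta_i\le\gamma$'' formulation follows. It shows exactly where $\beta>-1$ enters. It also explains why (b) is only one-directional (Carleson's counterexample), which is why the paper needs Theorem~\ref{comp_Hardy_Bergman} with the auxiliary family $\beta\in(-1,0]$.

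Two small corrections.

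First, finiteness of $\mu$ does not follow from the hypothesis by covering $\DD^d$ with one window. Such a window has all sides larger than $\gamma$, and the hypothesis says nothing about it. In fact, an infinite point mass at the origin satisfies the condition for every $\gamma<1$, while $1\notin L^2(\mu)$. So you must either take $\mu$ finite as a standing assumption (as in \cite{Jaf91}), or assume it locally finite. In the latter case the mass of $\DD^d\setminus(1-\gamma)\overline{\DD}^d$ is finite, because it is covered by finitely many windows having one side equal to $\gamma$.

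Second, if your covering windows have $j$-th side $1-r$, their number is of order $\delta_j/(1-r)$, which is not bounded in terms of $\gamma$. The estimate still closes, since the sum is $\lesssim\veps\,\delta_j(1-r)^{1+\beta}\prod_{i\neq j}\delta_i^{2+\beta}\le\veps\prod_i\delta_i^{2+\beta}$, using $1-r<\delta_j$. Alternatively, take side $\gamma$ as the paper does.
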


However, as the reciprocal does not hold on the Hardy space, we need one more result to prove compactness on Hardy spaces:

\begin{theorem}\label{comp_Hardy_Bergman}
Let $\beta_1 \geq-1 $ and $\phi \in \mathcal{O}(\DD^d,\DD^d).$ Assume that there exist $a >0$ and $ \veps : (0,a] \rightarrow (0,+\infty) $ with $\lim\limits_{0^+}\veps = 0, $ such that for all $\ovd \in (0,a]^d, $ all $\beta \in (-1,0] $ and all $\eta \in \TT^d, $
$$ \vb{1}(\phi^{-1}(\sed)) \leq \veps(\min(\delta_i, \ i=1,\dots,d))\dsp_{j=1}^d\delta_j^{2+\beta}. $$
Then $C_\phi : H^2(\DD^d) \rightarrow \Abdi{1}$  is compact. 
\end{theorem}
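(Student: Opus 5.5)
The plan is to reduce to the Bergman case in Theorem \ref{caract_compacite_mesure} (a) using a single value of $\beta$, and then factor $C_\phi$ through the inclusion $H^2(\DD^d)\hookrightarrow A^2_\beta(\DD^d)$. The hypothesis is stated for all $\beta\in(-1,0]$, but I expect one fixed value, say $\beta=0$, to suffice. Write $\mu=V_{\beta_1}\circ\phi^{-1}$ for the pull-back measure on $\DD^d$. By the change of variables formula,
$$\|f\circ\phi\|^2_{A^2_{\beta_1}(\DD^d)}=\int_{\DD^d}|f|^2\,d\mu .$$
Hence $C_\phi=J\circ I$, where $I:H^2(\DD^d)\to A^2_0(\DD^d)$ is the inclusion and $J:A^2_0(\DD^d)\to L^2(\mu)$ is the identity map, whose range is isometrically identified with a subspace of $A^2_{\beta_1}(\DD^d)$. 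Since a compact operator composed with a bounded one is compact, it suffices to prove two things: $I$ is bounded, and $J$ is compact.

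\emph{Boundedness of $I$.} For $f\in H^2(\DD^d)$, the integral means $r\mapsto\int_{\TT^d}|f(r_1e^{i\theta_1},\dots,r_de^{i\theta_d})|^2\,d\sigma_d$ are nondecreasing in each $r_k$. Integrating in polar coordinates against the probability measure $dV_0$ then gives $\|f\|_{A^2_0}\le\|f\|_{H^2}$. (The same argument works for any $\beta\in(-1,0]$.)

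\emph{Compactness of $J$.} I verify the criterion of Theorem \ref{caract_compacite_mesure} (a) with $\beta=0$. Since $\sed\subset\wed\subset S(\eta,c\ovd)$, the hypothesis yields
$$\mu(W(\eta,\ovd))\le\veps\bigl(c\min_i\delta_i\bigr)\,c^{2d}\prod_j\delta_j^{2}\qquad\text{whenever }c\ovd\in(0,a]^d .$$
Fix $\veps_0>0$ and choose $\gamma$ with $c\gamma\le a$ and $c^{2d}\veps(t)$ small for $t\le c\gamma$.

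\emph{Main obstacle.} The hypothesis only controls boxes with all $\delta_j\le a$, whereas Theorem \ref{caract_compacite_mesure} (a) requires windows with $\min_i\delta_i\le\gamma$ but with the other sides possibly as large as $2$. To handle these, I cover $W(\eta,\ovd)$ in each such direction $j$ by $N\lesssim 1/a$ windows whose $j$-th side is $a/c$. For these directions $\delta_j\ge a/c$, so $\delta_j^2$ is bounded below by a constant depending only on $a$. Summing the estimate over the at most $N^d$ sub-windows therefore gives
$$\mu(W(\eta,\ovd))\le C(a,d)\,\veps(c\min_i\delta_i)\prod_j\delta_j^2 .$$
This expression tends to $0$ uniformly as $\min_i\delta_i\to0$, which is exactly the condition in Theorem \ref{caract_compacite_mesure} (a). Hence $J$ is compact, and so is $C_\phi=J\circ I$. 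The only care needed is to check that the covering constant does not depend on $\eta$ or $\ovd$, which is clear since it depends only on $a$, $c$ and $d$.
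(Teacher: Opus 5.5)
The gap is in the step you call the main obstacle. A window whose $j$-th side is $a/c$ only contains points with $|z_j|\ge 1-a/c$. Finitely many such windows therefore cover only the outer shell of $W(\eta,\ovd)$ in the $j$-th variable, and the part where $|z_j|<1-a/c$ is left uncovered.

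Nothing in the hypothesis controls $\mu$ on that part. When $a\le 1$, every box $S(\eta,\ovd)$ with $\ovd\in(0,a]^d$ lies in $\{|z_j|>1-a \text{ for all } j\}$, so the hypothesis is blind to the mass $\mu$ puts where some $|z_j|\le 1-a$. In fact the statement is false in that case. Take $d=3$, $\phi(z)=(z_1,z_1,0)$, $\beta_1=0$ and $a=1/2$:
\begin{itemize}
\item Since $|\phi_3-\eta_3|=1$, every $\phi^{-1}(S(\eta,\ovd))$ with $\ovd\in(0,1/2]^3$ is empty, so the hypothesis holds with $\veps(t)=t$.
\item The functions $g_k=(k+1)^{-1/2}\sum_{n+m=k}z_1^nz_2^m$ form an orthonormal sequence in $H^2(\DD^3)$.
\item Their images $C_\phi g_k=(k+1)^{1/2}z_1^k$ have norm $1$ in $A^2_0(\DD^3)$, so $C_\phi$ is not compact.
\end{itemize}
For $a>1$ your step can be repaired by covering with Carleson boxes of side $a$ instead of windows, since such boxes reach the centre of the disc. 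The paper's proof faces the same issue and resolves it tacitly by using the estimate for all $\ovd\in(0,2]^d$. It bounds $\mu_{\gamma,\phi,\beta_1'}(W(\eta,\ovd))$ ``by assumptions'' for every window with $\delta_1\le\gamma$ or $\delta_2\le\gamma$, with the other side arbitrary in $(0,2]$.

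Once the estimate holds on all of $(0,2]^d$, your argument is correct and much shorter than the paper's. The case $\beta=0$ alone is the vanishing Carleson condition of Theorem~\ref{caract_compacite_mesure}(a) for $V_{\beta_1}\circ\phi^{-1}$. Hence $C_\phi:A^2_0(\DD^d)\to A^2_{\beta_1}(\DD^d)$ is compact, and composing with the contractive inclusion $H^2(\DD^d)\hookrightarrow A^2_0(\DD^d)$ finishes.

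The paper instead approximates $C_\phi$ by the finite-rank operators $C_\phi\circ P_N$. It proves error bounds for $C_\phi:A^2_\beta\to A^2_{\beta_1+1+\beta}$ that are uniform in $\beta\in(-1,0]$, and then lets $\beta\to-1$. That heavier route suits the hypothesis the paper actually uses, in which the pulled-back weight moves with $\beta$: $V_{\beta_1+1+\beta}$ in the proof, and $V_{\beta_2+d(\beta+1)}$ when the theorem is applied in Case~3 of Theorem~\ref{result_gen_comp}. Under that hypothesis, fixing $\beta=0$ would only give the weaker conclusion that $C_\phi$ is compact into $A^2_{\beta_1+1}(\DD^d)$. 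Your shortcut therefore depends on the fixed weight $V_{\beta_1}$ of the literal statement.
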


\begin{proof}
The idea will be to write $C_\phi$ as the limit of  finite rank operators (this proof will use the same ideas as the proof of Theorem  $8.2$ of \cite{Baytridisc}). Let us consider $\beta_1' = \beta_1+ (1+\beta) $ be such that $\beta_1' \rightarrow \beta_1 $ when $\beta \rightarrow -1. $ We aim to show that  there exist a sequence of finite rank operators $(T_N)$ and a sequence of positive real numbers $(\omega_N)_N$ going to zero when $N$ tends to $+\infty$ such that  $\norm{C_\phi-T_N }_{\mathcal{L}(\Abd, A^2_{\beta_1'}(\DD^d)) } \leq \omega_N$ for all $\beta\in (-1,0]$ with $\omega_N$ independent from $\beta. $ For simplicity reasons, we are only going to prove the case $d=2.$ Let $\gamma \in (0,1)$ and $N \in \NN$. For $f = \dss_{\alpha \in \NN} a_\alpha z_1^{\alpha_1}z_2^{\alpha_2}, $ we consider  $P_N(f) = \dss_{\m{\alpha} \leq N }a_\alpha z_1^{\alpha_1}z_2^{\alpha_2} $ of rank $N^2. $ We put $T_N = C_\phi \circ P_N$ and assume that $f$ is polynomial. We write $\vb{1',\phi}$ the image by $\phi$ of $\vb{1'} $ and $ \mu_{\gamma,\phi, \beta_1'}$ its restriction to $\DD^2 \setminus (1-\gamma)\DD^2. $ 
 We can write 
 \begin{align*}
     \norm{C_\phi(f) - T_N(f)}^2_{\beta_1'} & = \dsi_{\DD^2}\m{(f-P_N(f))\circ \phi}^2d\vb{1'} \\
     & = \dsi_{\DD^2}\m{f-P_N(f)}^2d\vb{1',\phi} \\
     & = \dsi_{(1-\gamma)\DD^2}\m{f-P_N(f)}^2d\vb{1',\phi} +\dsi_{\DD^2 \setminus (1-\gamma)\DD^2}\m{f-P_N(f)}^2d\mu_{\gamma,\phi, \beta_1'}.
 \end{align*}

Let us start by the second term. We want to apply Lemma $8.1$ of \cite{Baytridisc} to $ \mu_{\gamma,\phi, \beta_1'}. $ Let $\eta \in \TT^2 $ and $\ovd\in (0,2]^2$. If $\delta_1 \leq \gamma $ or $\delta_2 \leq \gamma$, then 
$$ \mu_{\gamma,\phi, \beta_1'}(\wed) \leq \vb{1'}(\wed) \leq \veps(\gamma) (\delta_1\delta_2)^{2+\beta}$$
with $\veps(\gamma) \xrightarrow [ \gamma \to 0]{} 0$ by assumptions. If $\delta_1 >\gamma$ and $\delta_2 > \gamma$, we cover $\wed \cap (\DD^2 \setminus(1-\gamma)\DD^2)$ as follows. Let $p_j = \lfloor \frac{2\delta_j}{\gamma} \rfloor$ for $j=1,2  $ (see the proof of Theorem $3.12$ of \cite{CMc95}). To simplify, we will write $p$ for $p_1+p_2 . $  Let $(\tk)_{k=1,\dots,p} $ be such that 
\begin{align*}
&\tu = -\delta_1, \hspace{3mm} \td = -\delta_1+\gamma, \hspace{3mm} \dots\ , \hspace{3mm} \theta_{p_1} = -\delta_1+(p_1-1)\gamma \\
&\theta_{p_1+1} = -\delta_2, \hspace{3mm} \theta_{p_1+2} = -\delta_2+\gamma, \hspace{3mm} \dots \ , \hspace{3mm} \theta_{p_1+p_2} = -\delta_2+(p_2-1)\gamma 
\end{align*}
and for
\begin{align*}
& k = 1,\dots, p_1, \hspace{3mm} \eta(k) = (e^{i\theta_k},1) \hspace{3mm} \text{and} \hspace{3mm} \ovd(k) = (\gamma, \delta_2) \\
& k = p_1+1 ,\dots, p_1+p_2 , \hspace{3mm} \eta(k) = (1,e^{i\theta_k}) \hspace{3mm} \text{and} \hspace{3mm} \ovd(k) = (\delta_1,\gamma).
\end{align*}
Then, $\wed \cap N \subset \bigcup\limits_{k=1}^pW(\eta(k),\ovd(k))$. Indeed, let $z\in \wed \cap N,\ z =(r_1e^{it_1},r_2e^{it_2})$. If $r_1 \geq 1-\gamma, $ there exists $k \in \lbrace 1,\dots, p_1 \rbrace$ such that $t_1 \in (\theta_k,\theta_k+\gamma)$ and if $r_2 \geq 1-\gamma$, there exists $k \in \{p_1+1,\dots, p_1+p_2\} $ such that $ t_2 \in (\theta_k,\theta_k+\gamma)$. Thus, $z \in W(\eta(k),\ovd(k))$ and $\wed \cap (\DD^2 \setminus(1-\gamma)\DD^2) \subset \bigcup\limits_{k=1}^{p_1+p_2}W(\eta(k),\ovd(k)). $ Hence, 
\begin{align*}
 \mu_{\gamma,\phi, \beta_1'}(\wed) &=    \mu_{\gamma,\phi, \beta_1'}(\wed\cap\DD^2 \setminus(1-\gamma)\DD^2 ) \\
 & \leq \dss_{k=1}^{p_1+p_2} \mu_{\gamma,\phi, \beta_1'}(W(\eta(k),\ovd(k)))\\
 & \leq \veps(\gamma)p_1\gamma^{2+\beta}\delta_2^{2+\beta} + \veps(\gamma) p_2\gamma^{2+\beta}\delta_1^{2+\beta}\\
 &\leq \veps(\gamma)\left(\dfrac{2\delta_1}{\gamma}\gamma^{2+\beta}\delta_2^{2+\beta}+ \dfrac{2\delta_2}{\gamma}\gamma^{2+\beta}\delta_1^{2+\beta}\right)\\
 &\leq 2\veps(\gamma)\delta_1\delta_2(\delta_1^{1+\beta}+\delta_2^{1+\beta})\\
 & \leq 4\veps(\gamma)(\delta_1\delta_2)^{2+\beta}. 
\end{align*}
By Lemma $8.1 $ of \cite{Baytridisc}, there exist $C>0$ independent of $\beta \in (-1,0], \ \gamma \in (0,1)$ and $N \in \NN$ such that for all $f$ holomorphic  polynomial
$$\dsi_{\DD^2}\m{f-P_N(f)}^2d\mu_{\gamma,\phi, \beta_1'} \leq \dsi_{\DD^2\setminus(1-\gamma)\DD^2}\m{(f-P_N(f))\circ \phi }^2d\vb{1'}  \leq C\veps(\gamma)^2\norm{f-P_N(f)}^2_2 \leq C\veps(\gamma)^2 \norm{f}_{2}^2. $$
Now, about the first term of the sum. We decompose $f-P_N(f)$ as 
$$f-P_N(f) = \dss_{k,l \geq N}a_{k,l}z_1^kz_2^l + \dss_{k <N \atop l\geq N}a_{k,l}z_1^kz_2^l+ \dss_{k \geq N \atop l < N}a_{k,l}z_1^kz_2^l = z_1^Nz_2^Ns_1(z)+z_2^Ns_2(z)+z_1^Ns_3(z). $$
Hence 
\begin{align*}
    \dsi_{(1-\gamma)\DD^2}\m{f-P_N(f)}^2d\vb{1',\phi} &\leq (1-\gamma)^{2N}\dsi_{\DD^2} \m{s_1}^2d\vb{1',\phi} +(1-\gamma)^{N}\dsi_{\DD^2} \m{s_2}^2d\vb{1',\phi} +(1-\gamma)^{N}\dsi_{\DD^2} \m{s_3}^2d\vb{1',\phi}\\
    & \leq C'((1-\gamma)^{2N}\norm{s_1}^2_{\beta_1'} +(1-\gamma)^{N}\norm{s_2}^2_{\beta_1'} +(1-\gamma)^{N}\norm{s_3}^2_{\beta_1'} ),
\end{align*}
with $C' = \sup\limits_{\beta \in (-1,0]}  \norm{C_\phi}_{\mathcal L(\Abd,A^2_{\beta_1'}(\DD^d))}<+\infty$. Indeed, our assumptions imply that we may apply Proposition $9.3$ of \cite{BAYPOLY}.
\begin{align*}
\norm{s_1}^2_{\beta_1'} &= \dss_{k,l\geq N} \dfrac{\m{a_{k,l}}^2}{(k-N+1)^{\beta_1'+1}(l-N+1)^{\beta_1'+1}}\\
&=  \dss_{k,l\geq N} \dfrac{\m{a_{k,l}}^2}{(k-N+1)^{\beta+1 + \beta_1+1}(l-N+1)^{\beta+1 +\beta_1+1}} \\
&= \dss_{k,l\geq N} \dfrac{\m{a_{k,l}}^2}{(k+1)^{\beta+1 }(l+1)^{\beta+1 }} \cdot \dfrac{((k+1)(l+1))^{\beta+1 }}{((k-N+1)(l-N+1))^{1+\beta+\beta_1+1}}\\
&\leq \dss_{k,l\geq N} \dfrac{\m{a_{k,l}}^2}{(k+1)^{\beta+1 }(l+1)^{\beta+1 }} \cdot \left(\dfrac{(k+1)(l+1)}{(k-N+1)(l-N+1)}\right)^{\beta+1}\\
& \leq (N+1)^{2(\beta+1)}\norm{f}^2_{2}.
\end{align*}  
In the same way, we can show that 
$$\norm{s_2}^2_{\beta_1'} ,\norm{s_3}^2_{\beta_1'}  \leq (N+1)^{2(\beta+1)}\norm{f}^2_{2}. $$
Hence, 
\begin{align*}
 \dsi_{(1-\gamma)\DD^2}\m{f-P_N(f)}^2d\vb{1',\phi} & \leq C'\norm{f}_2^2((1-\gamma)^{2N}(N+1)^{2(\beta+1)} + 2 (1-\gamma)^{N}(N+1)^{2(\beta+1)}) \\
 &\leq C'\norm{f}_2^2((1-\gamma)^{2N}(N+1)^{2} + 2 (1-\gamma)^{N}(N+1)^2), 
\end{align*}
because $\beta \in (-1,0]. $ Finally, 
$$ \norm{ C_\phi(f) -T_N(f)}^2_{\beta_1'} \leq C''(\veps(\gamma)^2 +(1-\gamma)^{2N}(N+1)^{2} + 2 (1-\gamma)^{N}(N+1)^2) \norm{f}_2^2,  $$
with $C'' >0$ depending only on $C$ and $C'$. We set $\gamma_N = \dfrac{1}{\log(N+1)}$, with $\veps(\gamma_N) \xrightarrow [N\to +\infty] {} 0,$ and 
$$ \omega_N =  C''(\veps(\gamma_N)^2 +(1-\gamma_N)^{2N}(N+1)^{2} + 2 (1-\gamma_N)^{N}(N+1)).$$ 
Thus,  $\omega_N \xrightarrow[N \rightarrow +\infty]{}0$ and for all holomorphic polynomials $f$ and all $\beta \in(-1,0], $we have  $\norm{C_\phi(f)-T_N(f)}_{\beta_1'}^2 \leq \omega_N\norm{f}^2_2.$ By letting  $ \beta $ go to $-1$, we get $\norm{C_\phi-T_N}_{\mathcal{L}(H^2(\DD^d), \Abdi{1})}^2 \leq \omega_N. $ Thus, $C_\phi : H²(\DD^d) \rightarrow \Abdi{1}$ is compact. 
\end{proof}

\subsection{Some technical lemmas }
Before stating our result, we need a lemma to estimate the volume of a certain subset which is going to be very useful in what follows (see \cite{BD26} for the proofs of all the lemmas): 
\begin{lemma}\label{equiv}
Let $\beta >-1$. There exist $C_1,C_2>0$ such that, for all measurable subsets $A$ of $\mathbb{R}^d$ and for all $\ovd \in (0,1)^d,$
$$C_1 (\delta_1\cdots \delta_d)^{(1+\beta)}\lambda_d(A) \leq V_\beta(\lbrace ((1-\rk)e^{i\tk})_{k=1,\dots,d}: 0\leq \rho_k\leq \delta_k, \theta \in A \rbrace) \leq C_2(\delta_1\cdots \delta_d)^{(1+\beta)}\lambda_d(A).$$
\end{lemma}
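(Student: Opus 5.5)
The product structure of both the measure and the set reduces the estimate to one dimension, and then a change of variables together with Fubini finishes it.

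The measure $dV_\beta$ is a product measure. In polar coordinates $z_k=(1-\rho_k)e^{i\theta_k}$, the one–dimensional weighted measure is
$$dA_\beta(z_k)=\frac{\beta+1}{\pi}\bigl(1-(1-\rho_k)^2\bigr)^\beta(1-\rho_k)\,d\rho_k\,d\theta_k .$$
The set in question is $E=\{((1-\rho_k)e^{i\theta_k}) : 0\le\rho_k\le\delta_k,\ \theta\in A\}$. We take $A\subset[-\pi,\pi)^d$, or more generally use the parametrisation with multiplicity; the statement implicitly assumes this, so that $\lambda_d(A)$ matches the angular measure. Thus $E$ is the image of the product set $\prod_k[0,\delta_k]\times A$ under the polar change of variables, which is injective on $\rho_k\in[0,1)$, $\theta\in[-\pi,\pi)^d$. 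By Fubini,
$$V_\beta(E)=\int_A\prod_{k=1}^d\left(\frac{\beta+1}{\pi}\int_0^{\delta_k}\bigl(\rho_k(2-\rho_k)\bigr)^\beta(1-\rho_k)\,d\rho_k\right)d\lambda_d(\theta)=\lambda_d(A)\prod_{k=1}^d g(\delta_k),$$
where $g(\delta)=\frac{\beta+1}{\pi}\int_0^\delta(\rho(2-\rho))^\beta(1-\rho)\,d\rho$. The $\rho$-integral does not depend on $\theta$, so the factorisation is exact.

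It therefore suffices to show $g(\delta)\simeq\delta^{1+\beta}$ uniformly for $\delta\in(0,1)$. For $\rho\in[0,1)$ we have $2-\rho\in[1,2]$, so $(2-\rho)^\beta$ lies between $\min(1,2^\beta)$ and $\max(1,2^\beta)$. The upper bound then follows from $1-\rho\le1$:
$$g(\delta)\lesssim\int_0^\delta\rho^\beta\,d\rho=\frac{\delta^{1+\beta}}{1+\beta},$$
which is finite because $\beta>-1$.

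The lower bound is the only delicate step, because the factor $1-\rho$ degenerates as $\delta\to1$. We handle it by restricting the integral to $[0,\delta/2]$, where $1-\rho\ge1/2$ since $\delta<1$. This gives
$$g(\delta)\gtrsim\int_0^{\delta/2}\rho^\beta\,d\rho=\frac{(\delta/2)^{1+\beta}}{1+\beta}\simeq\delta^{1+\beta}.$$

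Multiplying the $d$ one–dimensional estimates gives constants $C_1=c^d$ and $C_2=C^d$, which depend only on $\beta$ and $d$. These yield the two-sided inequality of Lemma \ref{equiv}.
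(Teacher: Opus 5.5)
Your argument is correct. The factorisation $V_\beta(E)=\lambda_d(A)\prod_{k}g(\delta_k)$ via polar coordinates and Fubini, followed by the one-dimensional bounds $g(\delta)\simeq\delta^{1+\beta}$ (restricting to $[0,\delta/2]$ for the lower bound), is the standard computation; the paper gives no proof of its own here and defers to \cite{BD26}.

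Your caveat is also right and worth keeping. The upper bound holds for every measurable $A$, but the lower bound needs $A$ to lie inside a fundamental domain such as $[-\pi,\pi)^d$; it fails for $A=\RR^d$, for instance. This restriction is consistent with how the lower bound is used in the proof of Theorem \ref{result_gen_comp}.
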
 

Our result gives a characterisation of compactness of $C_\phi : \Abdi{1} \rightarrow \Abdi{2} $ using only the boundary values of $\phi $ provided $\phi$ is regular enough. But before stating it, we need some definitions: 

\begin{definition}
\begin{enumerate}[(a)]
\item Let $\varphi \in \mathcal{O}(\DD^d,\DD) \cap \mathcal{C}^1(\ovD)$. Assume that there exists $\theta \in \RR^d$ such that $\varphi(\theta) \in \TT. $ We define
$$P_\varphi = \left\lbrace k \in \I{d} : \dfrac{\partial\varphi(\theta)}{\partial z_k} \neq 0 \right\rbrace.$$
\item Let $\phi\in\mathcal O(\DD^d,\DD^d)\cap\mathcal C^1(\ovD)$ and 
let $I\subset\{1,\dots,d\}$. We say that $\phi_I$ touches the polycircle if $\phi_I(\TT^d)\cap\TT^{|I|}\neq\varnothing.$
\item Let $\phi\in\mathcal O(\DD^d,\DD^d)\cap\mathcal C^1(\ovD)$ and 
$I\subset\{1,\dots,d\}$ be such that $\phi_I$ touches the boundary. For $j\in I,$ we simply write $P_j=P_{\varphi_j}$ and we set $P_I=\bigcup\limits_{j\in I  } P_j.  $
\item Let $\phi\in\mathcal O(\DD^d,\DD^d)\cap\mathcal C^1(\ovD)$, let 
$I\subset\{1,\dots,d\}$ be such that $\phi_I$ touches the boundary, let  $\ovd \in (0,1)^{|I|}$ and let $k\in\{1,\dots,d\}$. We set 
$$ \omega_{I,k}(\ovd) = \left\lbrace\begin{array}{ll} 
1 & \text{provided } k \notin P_I \\
\min\lbrace \delta_j,\ j\in I, \ k\in P_j \rbrace      & \text{otherwise.}  \\
\end{array}\right.$$
\end{enumerate}
 \end{definition}

This number $\wik$ can be quite difficult to compute as it really depends on the sequence $(\delta_1,\dots,\delta_d). $ However, it is very useful to compare the sets $\phi^{-1}(\sed)$ with sets whose volumes we know how to compute: 

\begin{lemma}[See Lemma $2.8$ of \cite{BD26}]\label{lem:doubleinclusion}
Let $\phi\in\mathcal O(\DD^d,\DD^d)\cap \mathcal C^1(\ovD).$
\begin{enumerate}[(a)]
\item For all $I\subset\{1,\dots,d\}$
such that $\phi_I$ touches the polycircle, there exists $D\geq 1$ such that, for all $\eta\in\TT^{|I|}$ and  all $\ovd\in(0,1)^{|I|},$
$$\left\{((1-\rho_k)e^{i\theta_k}):\ 0\leq \rho_k\leq \omega_{I,k}(\ovd),\ k\in P_I\textrm{ and }e^{i\theta}\in \phi_I^{-1}(S_{I}(\eta,\ovd))\right\}
\subset \phi_I^{-1}(S_I(\eta,D\ovd)).$$
\item For all $\xi\in\ovD,$ for all $I\subset\{1,\dots,d\}$ with $\phi_I(\xi)\in\TT^{|I|},$ there exist $D\geq 1$ and a neighbourhood $\mathcal U$ of $\xi$ in $\ovD$ such that, for all $\eta\in\TT^{|I|}$ and all $\ovd\in(0,1)^{|I|},$
$$\phi_I^{-1}(S_I(\eta,\ovd))\cap\mathcal U\subset \left\{((1-\rho_k)e^{i\theta_k}):\ 0\leq \rho_k\leq D\omega_{I,k}(\ovd),\ k\in P_I\textrm{ and }e^{i\theta}\in \phi_I^{-1}(S_{I}(\eta,D\ovd))\right\}.$$
\end{enumerate}
\end{lemma}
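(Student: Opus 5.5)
This is Lemma~2.8 of \cite{BD26}, and the plan is to follow the Taylor expansion of $\phi_I$ at the distinguished boundary, with first order only.

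\textbf{Part (a).} Let $z=((1-\rho_k)e^{i\theta_k})$ with $e^{i\theta}\in\phi_I^{-1}(S_I(\eta,\ovd))$ and $\rho_k\le\omega_{I,k}(\ovd)$ for $k\in P_I$. For $k\notin P_I$ we take $\rho_k=0$, consistently with the set description. Fix $j\in I$. By the mean value inequality along the radial segment from $e^{i\theta}$ to $z$,
$$|\phi_j(z)-\phi_j(e^{i\theta})|\le\sum_{k}\sup\Big|\frac{\partial\phi_j}{\partial z_k}\Big|\,\rho_k .$$
We need each term to be $\lesssim\delta_j$. If $k\in P_j$, then $\rho_k\le\omega_{I,k}(\ovd)\le\delta_j$ by definition of $\omega_{I,k}$, and the derivative is bounded since $\phi\in\mathcal C^1(\ovD)$.

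If $k\notin P_j$ but $k\in P_I$, the factor $\rho_k$ need not be small compared with $\delta_j$, so we must use smallness of $\partial\phi_j/\partial z_k$ instead. This is the delicate point. I would use the fact that $|\phi_j|\le1$ on $\ovD$ and attains modulus one at boundary points. Indeed, $P_j$ is meant to be defined from $\theta$ with $\phi_j(\theta)\in\TT$, and by Julia--Carathéodory-type reasoning (maximum of $|\phi_j|$ attained on $\TT^d$, so the gradient of $|\phi_j|^2$ is normal there) the derivative $\partial_{z_k}\phi_j$ is either identically zero on the touching set or nonzero there. For a $\mathcal C^1$ function the set $P_j$ is independent of the chosen touching point: one first checks this, possibly replacing $\phi_j$ by a function of fewer variables when $\partial_k\phi_j\equiv0$, which is what the definition implicitly assumes. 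Then for $k\notin P_j$ we get $\partial_k\phi_j\equiv0$ near the touching set, so the term vanishes.

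Points $e^{i\theta}$ away from the touching set have $|\phi_j(e^{i\theta})|\le1-c$. There $S_I(\eta,\ovd)$ with $\delta_j$ small excludes them, and for $\delta_j$ bounded below the claim is trivial with a large $D$. Summing over $k$ gives $|\phi_j(z)-\eta_j|<D\delta_j$.

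\textbf{Part (b).} Localize at $\xi$. For $j\in I$ and $k\in P_j$, write $\phi_j=\phi_j(\xi)\,(1-\sum_k a_{jk}(\text{normal displacement})+o)$ near $\xi$. The Julia-type sign condition gives $\Ree$ of $\overline{\phi_j(\xi)}\,\xi_k\,\partial_k\phi_j(\xi)>0$ for $k\in P_j$. Hence $1-|\phi_j(z)|\gtrsim\sum_{k\in P_j}\rho_k$ on a small neighbourhood $\mathcal U$. Since $z\in S_I(\eta,\ovd)$ forces $1-|\phi_j(z)|<\delta_j$, we get $\rho_k\lesssim\delta_j$ for all $j$ with $k\in P_j$, that is $\rho_k\le D\omega_{I,k}(\ovd)$. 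Then the estimate of part (a), applied in reverse, gives $|\phi_j(e^{i\theta})-\eta_j|\le\delta_j+C\sum_{k\in P_j}\rho_k\le D\delta_j$.

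The main obstacle is the positivity of the radial derivative, uniform on $\mathcal U$, together with the independence of $P_j$ from the touching point.
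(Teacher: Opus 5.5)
The paper gives no proof of this lemma (it is quoted from \cite{BD26}), so I judge your argument on its own. It has a genuine gap exactly at the step you call delicate.

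In (a) you assert that for $k\in P_I\setminus P_j$ one has $\partial_k\phi_j\equiv 0$ near the touching set, so the $\rho_k$-terms drop out. Julia's lemma gives much less. If $\partial_k\phi_j(\zeta)=0$ at a touching point $\zeta$, it only says that the slice $w\mapsto\phi_j(\zeta_1,\dots,w,\dots,\zeta_d)$ is a unimodular constant. That says nothing at nearby points, and it does not make $\phi_j$ independent of $z_k$. Your split into ``near'' and ``away'' points therefore misses the relevant regime. As $\delta_j\to 0$, the point $e^{i\theta}$ approaches the touching set only qualitatively, so $\sup|\partial_k\phi_j|$ is merely $o(1)$. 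The mean value estimate, however, needs $\sup|\partial_k\phi_j|\,\rho_k\lesssim\delta_j$, where $\rho_k\le\omega_{I,k}(\ovd)$ is some $\delta_{j'}$ and may be far larger than $\delta_j$.

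This genuinely fails. Let $\varphi(z_1,z_2)=\frac{1+z_1}{2}+\frac{(1-z_1)^2}{8}z_2$.
\begin{itemize}
\item The function $\frac{|1+z_1|}{2}+\frac{|1-z_1|^2}{8}$ is subharmonic and equals $1-\frac12(1-\cos\frac t2)^2$ at $z_1=e^{it}$. Hence $\varphi$ maps $\DD^2$ into $\DD$, and $|\varphi|=1$ on $\ovdd$ exactly on $\{z_1=1\}$.
\item At those points $\partial_1\varphi=\frac12$ and $\partial_2\varphi=0$, so $P_\varphi=\{1\}$. Yet $\partial_2\varphi=\frac{(1-z_1)^2}{8}$ vanishes nowhere else.
\end{itemize}
Take $\phi=(\varphi,\frac{1+z_2}{2})$ and $I=\{1,2\}$, so that $P_1=\{1\}$, $P_2=\{2\}$, $\omega_{I,1}=\delta_1$ and $\omega_{I,2}=\delta_2$. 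For small $t>0$ put $c=\cos\frac t2$, $s=\pi-\frac t2$ and $\theta=(t,s)$. Then $\varphi(e^{it},re^{is})=e^{it/2}\bigl(c+r\frac{1-c^2}{2}\bigr)$. Choose $\eta=(e^{it/2},e^{is/2})$, $\delta_1=(1-c)^2$ and $\delta_2=1-\frac12\sin\frac t4$.
\begin{itemize}
\item One checks $e^{i\theta}\in\phi_I^{-1}(S_I(\eta,\ovd))$.
\item The point $z=(e^{it},\frac12 e^{is})$ is admissible, with $\rho_1=0$ and $\rho_2=\frac12\le\delta_2$. (Take $\rho_1=\delta_1$ instead if you want $z\in\DD^2$; this changes $\phi_1(z)$ by at most $\delta_1$.)
\item Yet $|\phi_1(z)-\eta_1|\ge\frac{1-c}{2}=\frac{\delta_1}{2(1-c)}$, so no fixed $D$ works as $t\to 0$.
\end{itemize}

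So the inclusion (a), for all $\ovd\in(0,1)^{|I|}$, cannot be obtained from first-order information alone in this generality. Your mean value argument does prove it under an extra hypothesis: that each $\phi_j$, $j\in I$, depends only on $(z_k)_{k\in P_j}$. This is what your argument silently uses.

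Two further points.
\begin{itemize}
\item Setting $\rho_k=0$ for $k\notin P_I$ is not what the set means. Nothing is imposed on those coordinates (equivalently $\rho_k\le\omega_{I,k}=1$). This is how the lemma is used in the proof of Theorem \ref{result_gen_comp}, via Lemma \ref{equiv} with $\omega_{I,k}=1$; with $\rho_k=0$ the set would be $V_\beta$-null. Under the intended reading, the same example with $I=\{1\}$ (so $\rho_2$ is free) fails even as $\delta_1\to 0$, with the same $\theta$, $\eta_1$, $\delta_1$ and $z$.
\item The independence of $P_j$ from the touching point, which you plan to check first, is false in general. The map $\bigl(\frac{1+z_1}{2}\bigr)^2\frac{1+z_2}{2}-\bigl(\frac{1-z_1}{2}\bigr)^2$ sends $\DD^2$ into $\DD$, and has $P=\{1,2\}$ at $(1,1)$ but $P=\{1\}$ at $(-1,1)$.
\end{itemize}

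Part (b) inherits the main problem. Applying (a) ``in reverse'' again ignores the directions $k\notin P_j$. Moreover, when some $\xi_k\in\DD$, the point $z\in\mathcal U$ is not close to its radial projection $e^{i\theta}$, so the first-order expansion you invoke does not apply there.
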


Moreover, even if we can't compute $\wik$, we can estimate its product as it is done in the following lemmas: 

\begin{lemma}[See Lemma $2.9$ of \cite{BD26}]\label{lem:produitwk}
Let $\phi\in\mathcal O(\DD^d,\DD^d)\cap \mathcal C^1(\ovD)$ and  $I\subset\{1,\dots,d\}$ be such that $\phi_I$ touches the polycircle. Then for all $\ovd\in(0,1)^{|I|}, $ 
\begin{equation}\label{eq:produitwk}
 \dsp_{k=1}^d\wik  \leq \dsp_{j \in I} \delta_j^{\frac{\m{P_I}}{\m{I}}}. 
\end{equation}

\end{lemma}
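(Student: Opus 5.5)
The plan is to reduce the inequality to an exponent-counting statement and then settle that statement combinatorially. Only indices $k\in P_I$ matter, since $\wik=1$ otherwise. For $k\in P_I$ put $J_k=\{j\in I:\ k\in P_j\}$, which is nonempty by definition of $P_I$. Then
$$\dsp_{k=1}^d\wik=\dsp_{k\in P_I}\min_{j\in J_k}\delta_j .$$
Since every $\delta_j\in(0,1)$, a product $\prod_{j\in I}\delta_j^{a_j}$ is decreasing in each exponent $a_j$. It therefore suffices to bound each minimum by a weighted geometric mean.

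Concretely, I will choose weights $t_{k,j}\ge 0$ for $k\in P_I$ and $j\in J_k$ with $\sum_{j\in J_k}t_{k,j}=1$. For any such choice,
$$\min_{j\in J_k}\delta_j\le \dsp_{j\in J_k}\delta_j^{t_{k,j}},$$
and hence
$$\dsp_{k=1}^d\wik\le \dsp_{j\in I}\delta_j^{a_j},\qquad a_j=\sum_{k\in P_j}t_{k,j}.$$
The whole proof then reduces to a fractional assignment problem. We must distribute the $|P_I|$ unit masses, one per $k\in P_I$ and each carried by the $j$'s in $J_k$, so that every $j\in I$ receives at least $|P_I|/|I|$. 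Because the total mass is exactly $|P_I|$, each $j$ must receive exactly $|P_I|/|I|$. The natural first try is the uniform choice $t_{k,j}=1/|J_k|$. It works immediately when all the $P_j$, $j\in I$, coincide, since then every $a_j=|P_I|/|I|$.

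By the max-flow/min-cut theorem (a fractional Hall condition), such an exact assignment exists if and only if, for every $K\subset I$,
$$\Big|\bigcup_{j\in K}P_j\Big|\ \ge\ \frac{|K|\,|P_I|}{|I|}.$$
This is where the hypothesis that $\phi_I$ touches the polycircle must enter, and it is the main obstacle. Purely combinatorially the inequality can fail. For example, take $I=\{1,2\}$, $P_1=\{1,2,3\}$, $P_2=\{1\}$, $\delta_1\approx1$ and $\delta_2$ small: the left side of the lemma is about $\delta_2$, while the right side is $\delta_2^{3/2}$.

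So the key step will be to extract structural information on the sets $P_j$ from holomorphy at a touching point $e^{i\theta}$. First, Julia's lemma / Hopf-type positivity for the one-variable slices gives
$$\frac{z_k\,\partial_k\phi_j}{\phi_j}(e^{i\theta})\ge0 .$$
Second, the definition of $P_j$ is made at a common point where all the $\phi_j$, $j\in I$, are unimodular. I would then try to show that these facts force the Hall condition above; the most optimistic version is that $P_j=P_I$ for every $j\in I$. If that fails, I would need to re-examine how $\omega_{I,k}$ is meant to be used in Lemma \ref{lem:doubleinclusion}. In that case I would settle for proving the inequality whenever the fractional Hall condition holds, since the weighted-geometric-mean argument above then gives it directly.
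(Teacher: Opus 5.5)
Your reduction to a fractional assignment problem is correct. The gap is the step you leave open: you hope to extract the Hall-type inequality, or even $P_j=P_I$, from holomorphy at a touching point. That step cannot be carried out. With $P_j$ and $\omega_{I,k}(\ovd)$ as defined in this paper, the inequality is false in general, and your combinatorial configuration is realised by an actual symbol.

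On $\DD^3$ take $\phi(z)=\bigl(\tfrac{z_1+z_2+z_3}{3},\,z_1,\,0\bigr)$ and $I=\{1,2\}$. Then $\phi_I(1,1,1)=(1,1)\in\TT^2$, $P_1=\{1,2,3\}$ and $P_2=\{1\}$. Hence $\omega_{I,1}(\ovd)=\min(\delta_1,\delta_2)$ and $\omega_{I,2}(\ovd)=\omega_{I,3}(\ovd)=\delta_1$, so for $\delta_2<\delta_1$
\[
\frac{\prod_{k=1}^{3}\omega_{I,k}(\ovd)}{(\delta_1\delta_2)^{3/2}}=\Bigl(\frac{\delta_1}{\delta_2}\Bigr)^{1/2}>1 .
\]
The positivity $z_k\partial_k\phi_j/\phi_j\ge 0$ holds here: the values are $\tfrac13,\tfrac13,\tfrac13$ for $\phi_1$ and $1,0,0$ for $\phi_2$. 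So it gives no leverage. The Hall inequality simply fails for $K=\{2\}$, where $|P_K|=1<|K|\,|P_I|/|I|=\tfrac32$.

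Consequently your fallback proves a conditional statement, not the one asked. No argument can close the gap without an extra hypothesis. Redefining $\omega_{I,k}$ as $\min_{j\in I}\delta_j$ does not help either. It makes the inequality trivial, but the point $(1-\tfrac{\delta_2}{2},1-\delta_1,1-\delta_1)$ lies in $\phi_I^{-1}(S_I((1,1),\ovd))$ with $\rho_2=\delta_1$. That is not $O(\min(\delta_1,\delta_2))$ as $\delta_2/\delta_1\to 0$, so Lemma \ref{lem:doubleinclusion}(b) would fail.

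Your setup does give several things worth keeping. The condition $\bigl|\bigcup_{j\in K}P_j\bigr|\ge |K|\,|P_I|/|I|$ for all $K\subset I$ is necessary as well as sufficient for the inequality to hold for every $\ovd$. To see necessity, put $\delta_j=\delta$ on $K$ and $\delta_j=\tfrac12$ off $K$. The two sides are then constant multiples of $\delta^{|P_K|}$ and $\delta^{|K||P_I|/|I|}$, and one lets $\delta\to0$.

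Unconditionally, your uniform weights $t_{k,j}=1/|J_k|\ge 1/|I|$ yield $\prod_{k=1}^d\omega_{I,k}(\ovd)\le\prod_{j\in I}\delta_j^{|P_j|/|I|}$. On the diagonal $\delta_j\equiv\delta$ one has exactly $\prod_k\omega_{I,k}(\ovd)=\delta^{|P_I|}$, which is the form used in Corollary \ref{coro_noncom}.

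The paper supplies no argument of its own here; it only refers to Lemma 2.9 of \cite{BD26}, so there is no route to compare with. Note, however, that the proofs of Corollary \ref{coro_compacite} and Lemma \ref{lemme_cont} invoke the general-$\ovd$ form. They would need such a Hall-type hypothesis or a separate justification.
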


 \begin{lemma}[See Lemma $2.14$ of \cite{BD26}]\label{lem:wikcontinu}
Let $\phi\in\mathcal O(\DD^d,\DD^d)  \cap  \mathcal{C}^1(\ovD)$ and let $\beta_1,\beta_2\geq -1$. Assume that $C_\phi$ maps boundedly $A_{\beta_1}^2(\DD^d)$
into $A_{\beta_2}^2(\DD^d)$. Then there exists $D>0$ such that, for all $I\subset\{1,\dots,d\}$ such that $\phi_I$ touches the polycircle,
$$\prod_{k=1}^d \wik^{2+\beta_2}\leq D\prod_{j\in I}\delta_j^{2+\beta_1}.$$
\end{lemma}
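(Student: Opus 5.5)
The plan is to test the Carleson-type condition for boundedness on boxes that only constrain the coordinates in $I$. Then we compare the preimage with the "product-shaped" set of Lemma \ref{lem:doubleinclusion}(a), whose $V_{\beta_2}$-volume Lemma \ref{equiv} computes.

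First, boundedness of $C_\phi : A^2_{\beta_1}(\DD^d)\to A^2_{\beta_2}(\DD^d)$ means that the pull-back measure $V_{\beta_2}\circ\phi^{-1}$ is a Carleson measure for $A^2_{\beta_1}(\DD^d)$. By the standard characterisation on the polydisc (the bounded counterpart of Theorem \ref{caract_compacite_mesure}), this gives some $C>0$ such that $V_{\beta_2}(\phi^{-1}(S(\eta,\ovd)))\le C\prod_{j=1}^d\delta_j^{2+\beta_1}$ for all $\eta\in\TT^d$ and all $\ovd$. Fix $I$ such that $\phi_I$ touches the polycircle. Choose $\delta_k=2$ for $k\notin I$, so that $S(\eta,\ovd)\supset S_I(\eta_I,\ovd_I)\cap \DD^d$ and hence $\phi^{-1}(S(\eta,\ovd))\supset\phi_I^{-1}(S_I(\eta_I,\ovd_I))$. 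This yields
$$V_{\beta_2}\bigl(\phi_I^{-1}(S_I(\eta,\ovd))\bigr)\lesssim \prod_{j\in I}\delta_j^{2+\beta_1}$$
for all $\eta\in\TT^{|I|}$ and $\ovd\in(0,1)^{|I|}$.

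Next, apply Lemma \ref{lem:doubleinclusion}(a) with $\ovd/D$ in place of $\ovd$; since $\omega_{I,k}(\ovd/D)\ge \omega_{I,k}(\ovd)/D$, this only costs constants. Write $A_\eta=\{\theta: e^{i\theta}\in\phi_I^{-1}(S_I(\eta,\ovd/D))\}$. The set $\{((1-\rho_k)e^{i\theta_k}) : \rho_k\le \omega_{I,k}/D \ (k\in P_I),\ \theta\in A_\eta\}$ is contained in $\phi_I^{-1}(S_I(\eta,\ovd))$. For $k\notin P_I$ we have $\omega_{I,k}=1$, so we further restrict to $\rho_k\le 1/2$. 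Lemma \ref{equiv} then bounds the $V_{\beta_2}$-volume of this set from below by a constant times $\prod_{k}\omega_{I,k}^{1+\beta_2}\,\lambda_d(A_\eta)$. It therefore remains to choose $\eta$ so that $\lambda_d(A_\eta)\gtrsim\prod_{k=1}^d\omega_{I,k}(\ovd)$. Combining the two bounds then gives
$$\prod_k\omega_{I,k}^{2+\beta_2}\lesssim\prod_{j\in I}\delta_j^{2+\beta_1}.$$
A single constant $D$ works for every $I$ because there are finitely many subsets $I$.

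For the lower bound on $\lambda_d(A_\eta)$, pick $\theta^0$ with $\phi_I(\theta^0)\in\TT^{|I|}$ and set $\eta=\phi_I(\theta^0)$. On a box centred at $\theta^0$ with side $c\,\omega_{I,k}$ in the directions $k\in P_I$, the mean value inequality bounds the contribution of those directions to $|\phi_j(\theta)-\phi_j(\theta^0)|$ by $C\sum_{k\in P_j}\omega_{I,k}\le C\,|P_j|\,\delta_j$. This is because $\omega_{I,k}\le\delta_j$ whenever $k\in P_j$, by the very definition of $\omega_{I,k}$. Taking $c$ small, this contribution is at most $\delta_j/(2D)$.

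The main obstacle is the directions $k\notin P_j$ (in particular $k\notin P_I$, where the box side should be of order $1$). There $\partial\phi_j/\partial z_k(\theta^0)=0$, but with only $\mathcal C^1$ regularity we merely get an $o(|\theta_k-\theta^0_k|)$ control, which is not bounded by $\delta_j$. I would first try to exploit that $|\phi_j|\le1$ with $|\phi_j(\theta^0)|=1$: by a Julia-type argument on the one-variable slices $z_k\mapsto\phi_j$, this forces $\phi_j$ to be independent of $z_k$ near the touching set on the torus. If that fails, I would instead let $\theta^0$ range over the set where $\phi_I$ touches $\TT^{|I|}$ and integrate. Alternatively, I would average over $\eta$: using Fubini, $\int_{\TT^{|I|}}\lambda_d(A_\eta)\,d\eta$ is comparable to $\prod_{j\in I}\delta_j$, localised near the touching set. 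This would produce some $\eta$ with $\lambda_d(A_\eta)$ large enough, after which the argument of the previous paragraph applies.
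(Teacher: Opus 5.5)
Your reduction is sound. The Carleson-box necessary condition (still valid when $\beta_1=-1$), the choice $\delta_k=2$ for $k\notin I$, Lemma \ref{lem:doubleinclusion}(a) at scale $\ovd/D$, and Lemma \ref{equiv} do reduce everything to finding $\eta$ with $\lambda_d(A_\eta)\gtrsim\prod_{k=1}^d\omega_{I,k}(\ovd)$.

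\textbf{The gap.} That lower bound is the real content of the lemma, and you do not prove it. As you note, comparing $\theta$ with $\theta^0$ by the mean value inequality fails in two kinds of directions: $k\in P_I\setminus P_j$, where the box side $c\,\omega_{I,k}(\ovd)$ may be much larger than $\delta_j$, and $k\notin P_I$, where the side is of order $1$. You then only list tentative fixes, and two of them fail.

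\emph{First fix.} It rests on a false statement: a vanishing partial derivative at a touching point does not make $\phi_j$ independent of $z_k$ near the touching set. Take $\varphi(z_1,z_2)=\frac{1+z_1}{2}+\frac1{16}(1-z_1)^2z_2$. It satisfies $|\varphi(e^{it},z_2)|\le\cos(t/2)+\frac14\sin^2(t/2)<1$ for $t\neq0$, so it touches $\TT$ exactly on $\{1\}\times\TT$, and $\partial\varphi/\partial z_2=0$ there. Yet it depends on $z_2$ at every point with $z_1\neq1$. With mere $\mathcal C^1$ regularity there is no usable bound on $\partial_k\phi_j$ off the touching set.

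\emph{Averaging over $\eta$.} This is too lossy. For $\phi_1(z)=\frac{1+z_1}{2}$ and $I=\{1\}$, the mean of $\lambda_d(A_\eta)$ over $\eta\in\TT$ is of order $\delta^{3/2}$, whereas you need $\delta$. The reason is that $A_\eta$ is large only for $\eta$ near the image of the touching set.

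\textbf{The missing ingredient} is a rigidity property on faces, after which no estimate off the touching set is needed. Take $\theta^0$ with $\phi_I(e^{i\theta^0})=\eta\in\TT^{|I|}$, the point where the $P_j$ are read off, and fix $j\in I$. Then $\phi_j(w)=\eta_j$ for every $w\in\ovD$ with $w_k=e^{i\theta^0_k}$ for all $k\in P_j$.

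\emph{Proof of the face property.} Let $m=d-|P_j|$ and let $G$ be $\phi_j$ on this face, viewed as a function of $(w_k)_{k\notin P_j}\in\overline{\DD}^m$. It is holomorphic on $\DD^m$ as a uniform limit of holomorphic functions. Put $h(\lambda)=G(\lambda\zeta)$ with $\zeta=(e^{i\theta^0_k})_{k\notin P_j}$. Then $h\in\mathcal C^1(\overline{\DD})$, $|h|\le 1$, $|h(1)|=1$, and $h'(1)=\sum_{k\notin P_j}\zeta_k\,\partial_k\phi_j(e^{i\theta^0})=0$. If $h$ were not constant, Schwarz--Pick would give $1-|h(r)|\ge c_h(1-r)$ with $c_h>0$. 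This contradicts $1-|h(r)|\le|h(1)-h(r)|=o(1-r)$. Hence $|G(0)|=1$, and the maximum principle gives $G\equiv\eta_j$.

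\emph{Closing your argument.} Take $\theta$ with $|\theta_k-\theta^0_k|\le c\,\omega_{I,k}(\ovd)$ for $k\in P_I$ and $\theta_k$ arbitrary otherwise. Compare it with the point $\theta'$ obtained by resetting only the $P_j$-coordinates to those of $\theta^0$. Since $\phi_j(e^{i\theta'})=\eta_j$, you get
$$|\phi_j(e^{i\theta})-\eta_j|\le M c\sum_{k\in P_j}\omega_{I,k}(\ovd)\le Mcd\,\delta_j,$$
where $M$ bounds the first derivatives of $\phi$ on $\ovD$. So for $c$ small, $A_\eta$ contains a set of measure $\gtrsim\prod_{k\in P_I}\omega_{I,k}(\ovd)=\prod_{k=1}^d\omega_{I,k}(\ovd)$, and your argument goes through.

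\emph{A shorter route.} The same face property gives a shorter proof that bypasses Lemmas \ref{lem:doubleinclusion} and \ref{equiv}. Test $\|K^{\beta_1}_{\phi(w)}\|\le\|C_\phi\|\,\|K^{\beta_2}_w\|$ at $w_k=(1-\omega_{I,k}(\ovd))e^{i\theta^0_k}$ for $k\in P_I$ and $w_k=0$ otherwise. For this $w$ one has $1-|\phi_j(w)|\lesssim\delta_j$ for every $j\in I$, which yields the inequality directly.
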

\smallskip

\subsection{How to prove compactness}
Now, we are ready to state the result: 
\begin{theorem}\label{result_gen_comp}
Let $\beta_1, \ \beta_2\geq-1$.  Let $\phi \in \mathcal{O}(\DD^d,\DD^d)$ $ \cap  $ $\mathcal{C}^1(\ovD)$. Then, the following assertions are equivalent:  
\begin{enumerate}[(a)]
\item ${C_\phi : A_{\beta_1}^2(\DD^d) \rightarrow A_{\beta_2}^2(\DD^d)}$ is compact. \\
\item For all $I \subset \I{d}$ such that $\phi_I$ touches the polycircle and  for all $\veps >0, $ there exists $\gamma>0$ such that for all $\eta \in \TTI$ and all $\ovd \in (0,1)^{\m{I}},$ $\min\limits_{i\in I} \delta_i \leq \gamma $ implies 
$$\lambda_d(\lbrace \theta \in [-\pi,\pi)^d : \m{\phi_j(\theta)-\eta_j}<\delta_j,\  j\in I \rbrace) \dfrac{\prod\limits_{k=1}^d\omega_{I,k}(\ovd)^{1+\beta_2}}{\prod\limits_{j\in I}\delta_j^{2+\beta_1}}\leq \veps. $$
\item For all $\theta_0 \in \RR^d$, all $I \subset \I{d}$ such that  $\phi_I(\theta_0)\in \TTI$ and $I$ is maximal with respect to this property, there exists a neighbourhood $O$ of $\theta_0$ in $\RR^d$ such that for all $\veps >0, $ there exists $\gamma>0$ such that for all $\eta \in \TTI$ and all $\ovd \in (0,1)^{\m{I}},$ $\min\limits_{i\in I} \delta_i \leq \gamma $ implies 
\begin{equation}\label{eq:carlesonbord}
\lambda_d(\lbrace \theta \in O : \m{\phi_j(\theta)-\eta_j}<\delta_j,\ j\in I \rbrace)\dfrac{\prod\limits_{k=1}^d\omega_{I,k}(\ovd)^{1+\beta_2}}{\prod\limits_{j\in I}\delta_j^{2+\beta_1}}\leq \veps.
\end{equation}
\end{enumerate}
\end{theorem}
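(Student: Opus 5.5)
I would prove the cycle (a)$\Rightarrow$(b)$\Rightarrow$(c)$\Rightarrow$(a). Throughout, the link between compactness and the geometry of $\phi$ is the pull-back measure $\mu=V_{\beta_2}\circ\phi^{-1}$. Compactness of $C_\phi$ is compactness of the embedding $A^2_{\beta_1}(\DD^d)\to L^2(\mu)$, which Theorem \ref{caract_compacite_mesure} (and Theorem \ref{comp_Hardy_Bergman} when $\beta_1=-1$) reduces to estimates of $V_{\beta_2}(\phi^{-1}(\sed))$. The job is to compare these volumes with the boundary quantities in (b) and (c), using Lemma \ref{lem:doubleinclusion} and Lemma \ref{equiv}.

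For (a)$\Rightarrow$(b), fix $I$ such that $\phi_I$ touches the polycircle, and let $\eta\in\TTI$, $\ovd\in(0,1)^{\m{I}}$. Complete $\eta$ to a point of $\TT^d$ and take $\delta_k=2$ for $k\notin I$, so that $\phi^{-1}(S(\eta,D\ovd))=\phi_I^{-1}(S_I(\eta,D\ovd))$. Lemma \ref{lem:doubleinclusion}(a) says this set contains $\{((1-\rk)e^{i\tk}):\ 0\le\rk\le\wik,\ e^{i\theta}\in\phi_I^{-1}(S_I(\eta,\ovd))\}$. Lemma \ref{equiv} then bounds its $V_{\beta_2}$-measure from below by a constant times $\prod_k\wik^{1+\beta_2}\,\lambda_d(\{\theta:\m{\phi_j(\theta)-\eta_j}<\delta_j,\ j\in I\})$. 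Here we must treat coordinates $k\notin P_I$, where $\omega=1$ only in the limit, by taking $\rk\le c$ for a fixed $c<1$; this only changes constants. Applying the necessity part of Theorem \ref{caract_compacite_mesure}, with $\delta_k=2$ off $I$ and $\min_{i\in I}\delta_i\le\gamma$, gives (b). In the Hardy case one uses part (b) of that theorem with the normalisation $\prod\delta_j$. The step (b)$\Rightarrow$(c) is immediate: restrict to $O$ and note that a maximal $I$ with $\phi_I(\theta_0)\in\TTI$ makes $\phi_I$ touch the polycircle.

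The main work is (c)$\Rightarrow$(a). I would first use compactness of $\TT^d$ to extract finitely many points $\theta_0$ and neighbourhoods $O$ covering the set of $\theta$ at which some $\phi_I(\theta)\in\TTI$. I would then get the Carleson estimate for $\mu$, namely $V_{\beta_2}(\phi^{-1}(\sed))\le\veps\prod_j\delta_j^{2+\beta_1}$ whenever $\min\delta_j\le\gamma$. To do this, let $I$ be the set of indices with $\delta_j$ small, say $\delta_j<\gamma_0$. Points of $\ovD$ whose image lies in $S_I(\eta,\ovd)$ lie either near a point $\xi$ with $\phi_J(\xi)\in\TT^{\m{J}}$ for some $J\supset I$, or in a region where $\m{\phi_I}$ is bounded away from the polycircle, which contributes nothing once $\gamma$ is small. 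Near such a $\xi$, Lemma \ref{lem:doubleinclusion}(b) puts $\phi^{-1}(\sed)\cap\mathcal U$ inside a tube over the boundary set with radii $D\wik$. Lemma \ref{equiv} then converts its volume into the product in \eqref{eq:carlesonbord} with $D\ovd$, after which (c) yields the $\veps$ bound. If $\xi$ is not on $\TT^d$, I would reduce to the distinguished boundary: the $\rk$ coordinates with $k\notin P_I$ are free, and the tube description localises $\theta$ near some $\theta_0$. The indices $j\notin I$, where $\delta_j$ is large, are absorbed because $\delta_j^{2+\beta_1}\gtrsim\gamma_0^{2+\beta_1}$. Summing the finitely many local estimates gives the Carleson condition, and Theorem \ref{caract_compacite_mesure}(a) yields compactness when $\beta_1>-1$. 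When $\beta_1=-1$, I would check that the same bound holds uniformly for the weights $\beta\in(-1,0]$ and invoke Theorem \ref{comp_Hardy_Bergman}.

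I expect the main obstacle to be this localisation in (c)$\Rightarrow$(a). Condition (c) is stated only for the maximal $I$ at each $\theta_0$, whereas the box $\sed$ may have only a subset of small sides. Matching the two requires passing between the $\omega$'s of $I$ and of a larger $J$, presumably via Lemma \ref{lem:produitwk} or monotonicity of $\omega$ in $\ovd$. It also requires controlling the gaps between $\delta$ and $D\delta$ uniformly; here the uniformity of $\gamma$ and the finiteness of the cover are essential.
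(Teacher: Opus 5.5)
Your (a)$\Rightarrow$(b), your (b)$\Rightarrow$(c), and your (c)$\Rightarrow$(a) for $\beta_1,\beta_2>-1$ follow the paper's route. Lemma \ref{lem:doubleinclusion} and Lemma \ref{equiv} pass between $V_{\beta_2}(\phi^{-1}(\sed))$ and the boundary quantity, and a finite covering then absorbs the coordinates where $\phi_j$ stays away from $\TT$.

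The genuine gap is the case $\beta_1=-1$, which you reduce to checking ``the same bound uniformly for $\beta\in(-1,0]$''. Keep the target weight $\beta_2$ fixed. Then the bound $V_{\beta_2}(\phi^{-1}(\sed))\le\veps\prod_j\delta_j^{2+\beta}$ does not follow from (c) with $\beta_1=-1$: that condition only carries $\prod_{j\in I}\delta_j$, and $\delta_j^{2+\beta}\le\delta_j$. The bound is in fact false in general. Take $\phi=\mathrm{id}$ and $\beta_2\in(-1,0]$. Then (c) holds and $C_\phi:H^2(\DD^d)\to A^2_{\beta_2}(\DD^d)$ is compact, yet the bound would make $A^2_\beta(\DD^d)\hookrightarrow A^2_{\beta_2}(\DD^d)$ compact for $\beta\in[\beta_2,0]$, which it is not.

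The missing idea is to let the target weight move with $\beta$. The paper takes $\beta_2'=\beta_2+d(1+\beta)$. It writes $\prod_k\wik^{1+\beta_2'}/\prod_{j\in I}\delta_j^{2+\beta}$ as the quantity in (c) for $\beta_1=-1$, times $\bigl(\prod_k\wik^{d}/\prod_{j\in I}\delta_j\bigr)^{1+\beta}$. It bounds the last factor by $1$ with Lemma \ref{lem:produitwk}. It then applies Theorem \ref{comp_Hardy_Bergman} in the form its proof actually gives, with a target weight depending on $\beta$ and tending to the original one as $\beta\to-1$; the printed statement keeps it fixed.

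For $H^2\to H^2$ the paper proceeds differently. It first derives boundedness on $H^2$ from (c). Lemma \ref{lem:wikcontinu} then gives $\prod_k\wik\lesssim\prod_{j\in I}\delta_j$, hence a uniform estimate of $V_\beta(\phi^{-1}(\sed))$ against $\prod_j\delta_j^{2+\beta}$. It concludes with Theorem 8.2 of \cite{Baytridisc}.

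Separately, the obstacle you single out is not where the difficulty lies. You never need (c) for the set of small indices. At a point $\xi$ with maximal set $J$, apply (c) to $J$ with the full vector $(\delta_j)_{j\in J}$: (c) only requires $\min_{i\in J}\delta_i\le\gamma$ and leaves the other $\delta_j$, $j\in J$, arbitrary in $(0,1)$. For $j\notin J$ one has $\m{\phi_j}\le 1-\varepsilon_\xi$ near $\xi$. So either $\delta_j<\varepsilon_\xi$ and the local preimage is empty, or $\delta_j^{-(2+\beta_1)}\le\varepsilon_\xi^{-(2+\beta_1)}$. The empty case also disposes of boxes whose minimal side lies outside $J$.

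A comparison of $\omega$'s is needed elsewhere. For $\xi\notin\TT^d$, the maximal set at $e^{i\widetilde\theta}$ may be strictly larger than at $\xi$; for example, $\phi=\mathrm{id}$ on $\DD^2$ with $\xi=(1,1/2)$. Then (c) at $\widetilde\theta$ concerns some $J'\supsetneq J$; the paper's proof of its Fact passes over this point too. To handle it, fix $\eta_j=\phi_j(\widetilde\theta)$ and $\delta_j=c\in(0,1)$ for $j\in J'\setminus J$. Shrink $O$ so that these constraints hold automatically there. Then use $\omega_{J',k}(\ovd')\ge c\,\omega_{J,k}(\ovd)$, at the cost of constants. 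This is plain monotonicity of $\omega$; Lemma \ref{lem:produitwk} plays no role here.
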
 

\begin{proof}
We first observe that $b) \Rightarrow c)$ is immediate. We split the proof of the other implications in several cases, following $A_{\beta_1}^2$ (resp. $A_{\beta_2}^2$) is, or is not, the Hardy space. \\ 

\noindent \textit{Case 1: $C_\phi : A^2_{\beta_1}(\DD^d) \rightarrow A^2_{\beta_2}(\DD^d) $ with $\beta_1,\beta_2>-1$. } \\

\noindent $a) \Rightarrow  b)$ Let  $I \subset \I{d}$ be such $\phi_I$ touches the polycircle. Let also $\eta\in\TTI$ and $\ovd\in(0,1)^{|I|}.$ By Lemma \ref{lem:doubleinclusion}, there exists $D\geq 1$ such that 
$$E:=\left\{((1-\rho_k)e^{i\theta_k}):\ 0\leq \rho_k\leq \omega_{I,k}(\ovd),\ k\in P_I\textrm{ and }e^{i\theta}\in \phi_I^{-1}(S_{I}(\eta,\ovd))\right\}
\subset \phi_I^{-1}(S_I(\eta,D\ovd)).$$
Let us now extend $\ovd$ to $(0,2]^d$ by setting $\delta_j=2$ if $j\notin I$ and $\eta$ to $\TT^d$ by taking $\eta_j=1$ if $j\notin I.$
Then 
$$\phi_I^{-1}(S_I(\eta,D \ovd))= \phi^{-1}(S(\eta,D\ovd)).$$
We now use Lemma \ref{equiv} to get 
\begin{align*}
\lambda_d(\lbrace \theta \in [-\pi,\pi)^d : \m{\phi_j(\theta)-\eta_j}<\delta_j,\ j\in I \rbrace) \prod\limits_{k=1}^d\omega_{I,k}(\ovd)^{1+\beta_2} &\lesssim V_{\beta_2}(E)\\
&\lesssim V_{\beta_2}(\phi_I^{-1}(S_I(\eta,D \ovd)))\\
&\lesssim V_{\beta_2}(\phi^{-1}(S(\eta,D\ovd )))
\end{align*}
where the involved constants do not depend on $\eta$ and $\ovd.$ However, as the operator is compact, we have 
$$\forall \veps>0, \hspace{2mm} \exists \gamma >0, \hspace{2mm} \forall \ovd \in (0,2]^d,  \hspace{2mm} \forall \eta \in \TT^d, \hspace{2mm} \min\limits_{i = 1,\dots,d}\delta_i \leq\gamma \ \Rightarrow \ \dfrac{\vb{2}(\phi^{-1}(\sed))}{\prod\limits_{j=1}^d \delta_j^{2+\beta_1}} \leq \veps .$$
Thus, for all $\veps>0$,  there exists $\gamma >0$, such that for all $\ovd \in (0,2]^{\m{I}}$ and all $\eta \in \TTI$,  $\min\limits_{i \in I }\delta_i \leq\gamma $ implies \begin{align*}
\lambda_d(\lbrace \theta \in [-\pi,\pi)^d : \m{\phi_j(\theta)-\eta_j}<\delta_j,\  j\in I \rbrace) \prod\limits_{k=1}^d\omega_{I,k}(\ovd)^{1+\beta_2}  \lesssim  \veps\dsp_{j=1}^d\delta_j^{2+\beta_1} \lesssim \veps \dsp_{j\in I}\delta_j^{2+\beta_1}. 
\end{align*}
\smallskip

\noindent c) $\Rightarrow $ a) The proof will be divided into two steps. We first show that \eqref{eq:carlesonbord} implies a local Carleson type condition and then we conclude with a covering argument. \\
\noindent {\bf Fact:}
Let $\xi\in\ovD$ and $I\subset\{1,\dots,d\}$ be such that $\phi_I(\xi)\in\TTI$ and $I$ is maximal with respect to this property. Then there exists a neighbourhood $\mathcal U$ of $\xi$ in $\overline{\DD}^d$ such that, for all $\veps >0, $ there exists $\gamma = \gamma(\veps,\beta_1,\beta_2) >0$ such that for all $\eta\in \TTI$   and all $\ovd\in(0,1)^{|I|},\ \min\limits_{i \in I}\delta_i\leq\gamma $ implies  
$$V_{\beta_2}(\phi_I^{-1}(\Si)\cap \mathcal U) \leq \veps \dsp_{j\in I}\delta_j^{2+\beta_1}$$
where $\gamma$ does not depend on $\beta_1$ (resp. $\beta_2$) if $\beta_1\in(-1,0]$ (resp. $\beta_2\in (-1,0]$).

\noindent {\it Proof of the fact:}
Let us write $\xi=((1-\widetilde{\rho_k})e^{i\widetilde{\theta_k}})$ and observe that $\phi_I(e^{i\widetilde\theta})\in\TTI.$ Let $O$ be a neighbourhood of $\widetilde\theta$ be given by c) of Theorem \ref{result_gen_comp}. Let $D\geq 1$ and $\mathcal U_1$ be the neighbourhood of $\xi$ given by b) of Lemma \ref{lem:doubleinclusion}.
We set 
$$\mathcal U=\mathcal U_1\cap \{((1-\rho_k)e^{i\theta_k})_{k=1,\dots,d}: 0\leq\rho_k\leq 3/2,\ e^{i\theta}\in O\}.$$
Then $\mathcal U$ is a neighbourhood of $\xi$ and for all $\eta \in \TTI$ and all $\ovd \in \oii,$
$$\phi_I^{-1}(S_I(\eta,\ovd))\cap\mathcal U\subset \left\{((1-\rho_k)e^{i\theta_k}):\ 0\leq \rho_k\leq D\omega_{I,k}(\ovd),\ k\in P_I\textrm{ and }e^{i\theta}\in \phi_I^{-1}(S_{I}(\eta,D\ovd))\cap O\right\}.$$
We use Lemma \ref{equiv} and c) of Theorem \ref{result_gen_comp} to get : $\forall \veps >0 , \hspace{2mm} \exists \gamma >0, \hspace{2mm}\forall \eta \in \TTI, \hspace{2mm} \forall \ovd \in (0,1)^{\m{I}}, \hspace{2mm} {\min\limits_{i \in I}\delta_i \leq \gamma},$
\begin{align*}
V_{\beta_2}(\phi_I^{-1}(S_I(\eta,\ovd))\cap \mathcal U)&\leq
C_{\beta_2} \prod_{k=1}^d\omega_{I,k}(D\ovd)^{1+\beta_2}\lambda_d(\phi_I^{-1}(S_I(\eta,D\ovd))\cap O)\\
&\leq C_{\beta_2}D^{d(2+\beta_1)}C_{\beta_1}\veps \prod_{j \in I}\delta_j^{2+\beta_1}.
\end{align*}
The fact that  the constant $C_{\beta_1}$ and $C_{\beta_2}$ can be chosen independentely from $\beta_1$ and $\beta_2$  if $\beta_1$ and $\beta_2$ belong to $(-1,0] $ comes from Lemma $3.6$ of \cite{Baytridisc} and Lemma $8.1$ of \cite{BAYPOLY}. Thus $\gamma(\veps,\beta_1,\beta_2) = \gamma(\veps,\beta_2) $ (resp  $\gamma(\veps,\beta_1,\beta_2) = \gamma(\veps,\beta_1) $) when $\beta_1 \in (-1,0]$ (resp $\beta_2 \in (-1,0])$. 
\smallskip

We are now ready for the second half of $c)\implies a),$ the covering argument.
We introduce $$A_1 = \lbrace \xi \in \overline{\DD}^d:\  \phi(\xi) \in \TT^d\}. $$ 
Let $\xi\in A_1.$ By the fact, there exists $\mathcal U_{1,\xi}$ a neighbourhood of $\xi$ in $\overline{\DD}^d$ 
such that for all $\veps >0,$ there exists $\gamma_1 >0$ such that for all $\eta \in \TT^d$ and all $\ovd\in (0,1)^d$, $\min\limits_{i =1,\dots,d}\delta_i \leq\gamma_1$ implies 
$$V_{\beta_2}(\phi^{-1}(S(\eta,\ovd)) \cap \mathcal U_{1,\xi}) \leq 
\veps V_{\beta_1}(S(\eta,\ovd)).$$
Then, the sets $(\mathcal U_{\xi})_\xi$ form an open covering of $A_1$ which is compact so we can extract a finite covering $(\mathcal U_{1,k})_{k=1,\dots,s_1}$. For all $\eta \in \TT^d$ and $\ovd \in (0,1)^d $ with $\min\limits_{i =1,\dots,d}\delta_i\leq\gamma_1,$ we get 
$$\dfrac{\vb{2}\left(\phi^{-1}(S(\eta,\ovd)) \cap \bigcup\limits_{k=1}^{s_1}\mathcal U_{1,k}\right)}{\dsp_{j=1}^d\delta_j^{2+\beta_1}} \leq \sum\limits_{k=1}^{s_1}\dfrac{\vb{2}(\phi^{-1}(S(\eta,\ovd)) \cap \mathcal U_{1,k})}{\dsp_{j=1}^d\delta_j^{2+\beta_1}} \leq \left(\sum\limits_{k=1}^{s_1}1
\right)\veps. $$ 
Observe that $s_1$ does not depend on $\gamma_1$. Then, there exists $\varepsilon_1 \in (0,1)$ such that for all $\xi \in B_1 = \ovD \setminus \bigcup\limits_{k=1}^{s_1}\mathcal U_{1,k} $, we have $\m{\phi_j(\xi)}< 1-\varepsilon_1$ for some $j \in \lbrace 1,\dots,d\rbrace$. \\
We introduce 
$$A_2 = \lbrace \xi \in B_1:\  \exists I \subset \lbrace 1 ,\dots,d \rbrace,\  \m{I}  = d-1,\ \phi_I(\xi)\in \TT^{d-1} \rbrace.$$
Let $\xi \in A_2.$  There exists $I \subset \lbrace 1 ,\dots,d \rbrace$ with  $\m{I}  = d-1$ such that $ \phi_I(\xi)\in \TT^{d-1}$ and $\m{\phi_j(\xi)} < 1 - \varepsilon_1$ for $j \notin I$. So, applying again the fact, we get that there exists $\mathcal U_{2,\xi}$ a neighbourhood of $\xi$ in $\ovD$ 
such that for all $\veps >0,$ there exists $\gamma >0$ such that for all $\eta \in \TT^d$ and all $\ovd\in (0,1)^d$, $\min\limits_{i\in I}\delta_i \leq\gamma$ implies 
$$V_{\beta_2}(\phi_I^{-1}(S_I(\eta,\ovd)) \cap \mathcal U_{2,\xi}) \leq 
\veps \dsp_{j\in I }\delta_j^{2+\beta_1}.$$
Let $j \notin I$. If $\delta_j < \varepsilon_1$ then $\phi_j^{-1}(S(\eta,\ovd)) \cap \mathcal U_{2,\xi} = \emptyset$ for all $z$. Then, for all $\veps >0,$ there exists $\gamma_2 = \min(\veps_1,\gamma)  \in (0,\gamma_1]$ such that for all $\eta \in \TT^d$ and all $\ovd\in (0,1)^d$, $\min\limits_{i =1,\dots,d}\delta_i \leq\gamma_2$ implies 
\begin{align*}
\dfrac{V_{\beta_2}(\phi^{-1}(\sed) \cap \mathcal U_{2,\xi})}{\dsp_{j=1}^d \delta_j^{2+\beta_1}} \leq \dfrac{V_{\beta_2}(\phi_I^{-1}(\Si) \cap \mathcal U_{2,\xi})}{\dsp_{j \in I} \delta_j^{2+\beta_1}} \cdot \dfrac{1}{\veps_1^{2+\beta_1}} \leq D_{2,\xi}\veps.
\end{align*}
As $\dfrac{1}{\varepsilon_1^{2+\beta_1}}\leq \dfrac{1}{\varepsilon_1^2}$ when $\beta_1 \in (-1,0],$ we get that $D_{2,\xi}$ is independent of $\beta_i $ if $\beta_i \in (-1,0].$ Moreover, the sets $(\mathcal U_{2,\xi})_\xi$ form an open covering of $A_2$ which is compact so we can extract a finite covering $(\mathcal U_{2,k})_{k=1,\dots, s_2}$ and we have for all $\veps >0,$ there exists $\gamma_2  >0$ such that for all $\eta \in \TT^d$ and all $\ovd\in (0,1)^d$, $\min\limits_{i =1,\dots,d}\delta_i \leq\gamma_2$ implies 
$$\dfrac{V_{\beta_2}\left(\phi^{-1}(\sed) \cap \bigcup\limits_{k=1}^{s_2} \mathcal U_{2,k}\right)}{\dsp_{j=1}^d\delta_j^{2+\beta_1}} \leq \sum_{k=1}^{s_2}\dfrac{\vb{2}(\phi^{-1}(\sed\cap \mathcal U_{2,k}))}{\dsp_{j=1}^d\delta_j^{2+\beta_1}} \leq \left(\sum_{k=1}^{s_2}D_{2,k}\right)\veps.$$ 
Then, there exists $\varepsilon_2 \in (0 ,\varepsilon_1)$ such that for all $\xi \in B_2 = B_1 \setminus \bigcup\limits_{k=1}^{s_2}\mathcal U_{2,k}$, there exist two distinct $j_1, j_2 \in \lbrace 1,\dots,d\rbrace$ such that $\m{\phi_{j_1}(\xi)} < 1-\varepsilon_2$  and $\m{\phi_{j_2}(\xi)} < 1-\varepsilon_2$.
We repeat the reasoning with 
$$A_3 =  \lbrace \xi \in B_2:\ \text{ there exists } I \subset \I{d},\ \m{I} = d-2,\ \phi_I(\xi) \in \TT^{d-2} \rbrace$$ and then we define $B_3, A_4, \dots , B_{d-1}, A_d$. At the last step, there exists $\varepsilon_d \in (0,\varepsilon_{d-1})$ such that for all $\xi \in B_d = B_{d-1}\setminus \bigcup\limits_{k=1}^{s_d}\mathcal U_{d,k}$, we have $\m{\phi_j(z)} < 1-\varepsilon_d$ for all $j\in \lbrace 1,\dots,d\rbrace.$\\
Then, for all $\veps >0,$ there exists $\gamma_d \in (0,\gamma_{d-1}]$ such that for all $\eta \in \TT^d$ and all $\ovd\in (0,1)^d$, $\min\limits_{i =1,\dots,d}\delta_i \leq \gamma_d$ implies 
\begin{align*}
    \dfrac{\vb{2}(\phi^{-1}(\sed))}{\dsp_{j=1}^d\delta_j^{2+\beta_1}} \leq \sum_{l} \dfrac{\vb{2}\left(\phi^{-1}(\sed) \cap \bigcup\limits_{k}\mathcal U_{l,k} \right)}{\dsp_{j=1}^d\delta_j^{2+\beta_1}} &\leq \left(\sum_{k,l}D_{l,k}\right)\veps = D \veps,
\end{align*}
with $D>0$ and independent of $\beta_i $ if $\beta_i \in (-1,0].$ Notice  that  that $\gamma_d$ is also independent of $\beta_i$ for $\beta_i \in (-1,0]$. 
Thus, $C_\phi : \Abdi{1} \rightarrow \Abdi{2}$ is compact. \\

\noindent \textit{Case 2 : $C_\phi : A^2_{\beta_1}(\DD^d) \rightarrow H^2(\DD^d)$ with $\beta_1>-1.$} The proof is identical since the characterisation of compactness extends to $\beta_2=-1$. We replace everywhere $V_{\beta_2}$ by $\lambda_d.$ \\

\noindent \textit{Case 3 : $C_\phi :  H^2(\DD^d)  \rightarrow  A^2_{\beta_2}(\DD^d)$ with $\beta_2>-1.$}  $a) \Rightarrow b)$ The proof is again identical as the necessity of the condition holds on $H^2(\DD^d)$. To prove $c) \Rightarrow a)$ we are going to apply Theorem \ref{comp_Hardy_Bergman}.  By assumptions,  for all $\theta_0 \in \RR^d$, all $I \subset \I{d}$ such that  $\phi_I(\theta_0)\in \TTI$ and $I$ is maximal with respect to this property, there exists a neighbourhood $O$ of $\theta_0$ in $\RR^d$ such that for all $\veps >0, $ there exists $\gamma>0$ such that for all $\eta \in \TTI$ and all $\ovd \in (0,1)^{\m{I}},$ $\min\limits_{i\in I} \delta_i \leq \gamma $ implies $$
\lambda_d(\lbrace \theta \in O : \m{\phi_j(\theta)-\eta_j}<\delta_j,\ j\in I \rbrace)\dfrac{\prod\limits_{k=1}^d\omega_{I,k}(\ovd)^{1+\beta_2}}{\prod\limits_{j\in I}\delta_j}\leq \veps. $$ Let us consider $\beta_2' = \beta_2+d(\beta+1)$  for all  $\beta \in (-1,0]$. Then
 \begin{align*}
    \dfrac{\dsp_{k=1}^d\wik^{1+\beta_2'}}{\dsp_{j\in I}\delta_j^{2+\beta}} = \dfrac{\dsp_{k=1}^d\wik^{1+\beta_2+d(\beta+1)}}{\dsp_{j\in I}\delta_j^{2+\beta}} = \dfrac{\prod\limits_{k=1}^d\omega_{I,k}(\ovd)^{1+\beta_2}}{\prod\limits_{j\in I}\delta_j} \left(\dfrac{\prod\limits_{k=1}^d\omega_{I,k}(\ovd)^{d}}{\prod\limits_{j\in I}\delta_j}\right)^{1+\beta} 
    \leq  \dfrac{\prod\limits_{k=1}^d\omega_{I,k}(\ovd)^{1+\beta_2}}{\prod\limits_{j\in I}\delta_j},
 \end{align*}
because $\dfrac{\prod\limits_{k=1}^d\omega_{I,k}(\ovd)^{d}}{\prod\limits_{j\in I}\delta_j} \leq 1 $ by Lemma \ref{lem:produitwk}. Hence, for all $\theta_0 \in \RR^d$, all $I \subset \I{d}$ such that  $\phi_I(\theta_0)\in \TTI$ and $I$ is maximal with respect to this property, there exists a neighbourhood $O$ of $\theta_0$ in $\RR^d$ such that for all $\veps >0, $ there exists $\gamma>0$ such that for all $\eta \in \TTI$ and all $\ovd \in (0,1)^{\m{I}},$ $\min\limits_{i\in I} \delta_i \leq \gamma $ implies
$$ \lambda_d(\lbrace \theta \in O : \m{\phi_j(\theta)-\eta_j}<\delta_j,\ j\in I \rbrace)  \dfrac{\dsp_{k=1}^d\wik^{1+\beta_2'}}{\dsp_{j\in I}\delta_j^{2+\beta}} \leq \veps. $$ 
In the same way as in the proof of the first case of this theorem, one can show that 
$$ \forall \veps>0, \ \exists \gamma >0, \ \forall\eta\in \TT^d, \ \forall \ovd \in (0,1)^d, \  \forall \beta \in (-1,0],\ \min\limits_{i=1,\dots,d}\delta_i\leq \gamma \ \Rightarrow \ \vb{2'}(\phi^{-1}(\sed)) \leq \veps \dsp_{j=1}^d\delta_j^{2+\beta}. $$
Thus, by Theorem \ref{comp_Hardy_Bergman}, we deduce that $C_\phi : H^2(\DD^d) \rightarrow \Abd$ is compact. \\
\noindent \textit{Case 4 : $C_\phi :  H^2(\DD^d) \rightarrow H^2(\DD^d)$.} 
The proof of $a) \Rightarrow b) $ is once again identical. Now to prove $c) \Rightarrow a) $, we want to apply Theorem $8.2$ of \cite{Baytridisc}. By assumptions,  for all $\theta_0 \in \RR^d$, all $I \subset \I{d}$ such that  $\phi_I(\theta_0)\in \TTI$ and $I$ is maximal with respect to this property, there exists a neighbourhood $O$ of $\theta_0$ in $\RR^d,$
 there exists $\gamma>0$ such that for all $\eta \in \TTI$ and all $\ovd \in (0,1)^{\m{I}},$ $\min\limits_{i\in I} \delta_i \leq \gamma $ implies $$
\lambda_d(\lbrace \theta \in O : \m{\phi_j(\theta)-\eta_j}<\delta_j,\ j\in I \rbrace)\leq\prod\limits_{j\in I}\delta_j. $$ 
 Besides, for $\ovd \in (0,1)^d $ such that $\min\limits_{i\in I} \delta_i \geq \gamma ,$   we know that $\lambda_d(\lbrace \theta \in O : \m{\phi_j(\theta)-\eta_j}<\delta_j,\ j\in I \rbrace)$ is bounded because $O$ is a bounded neighbourhood of $\theta_0 \in \RR^d$ so there exists $C_1 >0$ such that $$\lambda_d(\lbrace \theta \in O : \m{\phi_j(\theta)-\eta_j}<\delta_j,\ j\in I \rbrace)\leq C_1 \dsp_{j\in I} \delta_j$$ (because $ \dsp_{j\in I} \delta_j \geq \gamma_1^{\m{I}} >0).$ Thus,  for $C= \max(1,C_1),$ we obtain that 
 for all $\theta_0 \in \RR^d$, all $I \subset \I{d}$ such that  $\phi_I(\theta_0)\in \TTI$ and $I$ is maximal with respect to this property, there exist a neighbourhood $O$ of $\theta_0$ in $\RR^d$ and $C>0$ such that for all $\eta \in \TTI$ and all $\ovd \in (0,1)^{\m{I}},$ $$
\lambda_d(\lbrace \theta \in O : \m{\phi_j(\theta)-\eta_j}<\delta_j,\ j\in I \rbrace)\leq C \prod\limits_{j\in I}\delta_j $$ and $C_\phi: H^2(\DD^d) \rightarrow H^2(\DD^d)$ is bounded. Now, we intend to prove the following fact:\\ 
\noindent {\bf Fact:} Let $\xi\in\ovD$ and $I\subset\{1,\dots,d\}$ be such that $\phi_I(\xi)\in\TTI$ and $I$ is maximal with respect to this property. Then there exists a neighbourhood $\mathcal U$ of $\xi$ in $\overline{\DD}^d$ such that, for all $\veps >0, $ there exists $\gamma >0$ such that for all $\eta\in \TTI,$ all $\beta \in (-1,0]$ and  all $\ovd\in(0,1)^{|I|},\ \min\limits_{i \in I}\delta_i\leq\gamma $ implies 
$$V_{\beta}(\phi_I^{-1}(\Si)\cap \mathcal U) \leq \veps \dsp_{j\in I}\delta_j^{2+\beta}.$$
As in the first case of the present theorem, we construct such a neighbourhood $\mathcal{U}$ and we have that for all $\veps >0, $ there exists $\gamma >0$ such that for all $\eta \in \TTI,  $ all $\ovd \in (0,1)^{\m{I}}, $ $\min\limits_{i \in I}\delta_i\leq  \gamma$ implies
\begin{align*}
V_\beta(\phi^{-1}_I(\Si)\cap \mathcal U) \leq C_\beta \dsp_{k=1}^d\wik^{1+\beta}\lambda_d(\phi^{-1}_I(S_I(\eta, D\ovd))) &\leq C_\beta\veps \dsp_{j \in I}\delta_j\dsp_{k=1}^d\wik^{1+\beta} \\
 &\leq C_\beta \veps \dsp_{j\in I} \delta_j^{2+\beta} \left(\dfrac{\dsp_{k=1}^d\wik}{ \dsp_{j\in I} \delta_j}\right)^{1+\beta}.  
\end{align*}
However, as $C_\phi : H^2(\DD^d) \rightarrow H^2(\DD^d) $ is bounded, by Lemma \ref{lem:wikcontinu}, we know that there exists $D_1>0$ such that $\dsp_{k=1}^d\wik \leq D_1\dsp_{j\in I}\delta_j. $ Hence, 
$$\left(\frac{\dsp_{k=1}^d\wik}{ \dsp_{j\in I} \delta_j}\right)^{1+\beta} \leq D_1^{1+\beta} \leq  \left\{ \begin{tabular}{ccc}
     $1$& \text{ if }& $D_1 \in (0,1)$ \\
     $D_1^2$ &\text{ if } &$ D_1 \geq 1$
\end{tabular}.
\right.
$$

Thus, $$V_\beta(\phi^{-1}_I(\Si)\cap \mathcal U) \leq C_\beta \veps C_{D_1} \dsp_{j \in I}\delta_j^{2+\beta} = C\veps \dsp_{j \in I}\delta_j^{2+\beta},$$
with $C>0$  independent from $\beta \in (-1,0] $ (see Lemma $3.6$ of \cite{Baytridisc} and Lemma $8.1$ of \cite{BAYPOLY}) and $\gamma$ is also independent from $\beta \in (-1,0]$.  Thus, there exists $\mathcal{U} $ a neighbourhood of $\xi \in \DD^d $, such that for all $\veps >0, $ there exists $\gamma >0$ such that for all $\eta \in \TTI,  $ all $\beta \in (-1,0]$ and  all $\ovd \in (0,1)^{\m{I}}, $ $\min\limits_{i \in I}\delta_i\leq \gamma$ implies $V_\beta(\phi^{-1}_I(\Si)\cap \mathcal U)  \leq \veps \dsp_{j \in I}\delta_j^{2+\beta}$ . Then, by the same covering argument as in the first case of the proof of the current theorem, one can show that 
$$ \forall \veps >0, \ \exists \gamma > 0, \ \forall \eta \in \TT^d, \ \forall \ovd\in (0,1)^d,\ \forall \beta \in (-1,0],\ \min\limits_{i= 1,\dots,d} \delta_i \leq\gamma  \ \Rightarrow V_\beta(\phi^{-1}(\sed)) \leq \veps \dsp_{j=1}^d \delta_j^{2+\beta}. 
$$
Applying Theorem $8.2$ of \cite{Baytridisc}, we deduce that $C_\phi : H^2(\DD^d) \rightarrow H^2(\DD^d)  $ is compact. 
\end{proof} 

We conclude this section with two corollaries that provide conditions for compactness and non-compactness using only the product of the $\delta_i$, $i=1,\dots,d$, rather than $\wik$.

\begin{corollary}\label{coro_compacite}
Let $\beta_1, \ \beta_2\geq -1 $ and $\phi \in \mathcal O(\DD^d,\DD^d) \cap \mathcal C^1(\overline \DD^d). $
\begin{enumerate}[(a)]
    \item Let $\theta_0 \in \RR^d$ and $I \subset \lbrace 1, \dots, d\rbrace $ be such that $\phi_I(e^{i\theta_0}) \in \TTI$ and 
 there exist $C>0$ and $O$ a neighbourhood of $\theta_0$ in $\RR^d$ such that for all $\eta \in \TT^{\m{I}}$, $\ovd \in (0,1)^{\m{I}}$ we have 
 \begin{equation}\label{eq:cont1}
\lambda_d(\lbrace \theta \in O : \m{\phi_j(\theta)- \eta_j } < \delta_j,\ j \in I\rbrace) \leq C \left(\displaystyle\prod_{j\in I}\delta_j\right)^\alpha, 
\end{equation}
with $\alpha + \dfrac{\m{P_I}}{\m{I}}(1+\beta_2) > 2+\beta_1$. \\
Then for all $\veps >0,$ there exists $\gamma >0$ such that for all $\eta \in \TTI$ and all $\ovd \in (0,1)^{\m{I}}, $ $\min\limits_{i\in I}\delta_i \leq\gamma $ implies  
$$\lambda_d(\lbrace \theta \in O : \m{\phi_j(\theta)- \eta_j } < \delta_j,\ j \in I\rbrace) \leq \veps  \dfrac{\prod\limits_{j\in I}\delta_j^{2+\beta_1}}{\prod\limits_{k=1}^d\omega_{I,k}(\ovd)^{1+\beta_2}} .$$
\item Assume that for all $\theta_0 \in \RR^d$ and all $I \subset \lbrace 1, \dots, d\rbrace $ such that $\phi_I(e^{i\theta_0}) \in \TTI$ and $I$ is maximal with respect to this property, there exist $C>0$ and  $O$ a neighbourhood of $\theta_0$ in $\RR^d$ such that for all $\eta \in \TT^{\m{I}}$ and all $\ovd \in (0,1)^{\m{I}},$ we have 
 \begin{equation}\label{eq:cont}
\lambda_d(\lbrace \theta \in O : \m{\phi_j(\theta)- \eta_j } < \delta_j,\ j \in I\rbrace) \leq C \left(\displaystyle\prod_{j\in I}\delta_j\right)^\alpha, 
\end{equation}
with $\alpha + \dfrac{\m{P_I}}{\m{I}}(1+\beta_2) > 2+\beta_1$. Then $C_\phi : \Abdi{1}\rightarrow \Abdi{2} $ is compact.  
\end{enumerate} 
\end{corollary}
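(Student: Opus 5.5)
Part (a) reduces to a pointwise inequality between $\prod_k\omega_{I,k}(\ovd)$ and $\prod_{j\in I}\delta_j$, supplied by Lemma \ref{lem:produitwk}. Part (b) then follows from (a) through the implication $c)\Rightarrow a)$ of Theorem \ref{result_gen_comp}.

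For (a), fix $\theta_0$, $I$, $O$ and $C$ as in the hypothesis, and write $\pi=\prod_{j\in I}\delta_j$. Since $\ovd\in(0,1)^{|I|}$, every $\omega_{I,k}(\ovd)$ lies in $(0,1]$. Lemma \ref{lem:produitwk} gives $\prod_k\omega_{I,k}(\ovd)\le \pi^{|P_I|/|I|}$, and since $1+\beta_2\ge 0$,
$$\prod_{k=1}^d\omega_{I,k}(\ovd)^{1+\beta_2}\le \pi^{\frac{|P_I|}{|I|}(1+\beta_2)}.$$
Combining this with \eqref{eq:cont1} yields
$$\lambda_d(\lbrace \theta\in O:\m{\phi_j(\theta)-\eta_j}<\delta_j,\ j\in I\rbrace)\,\frac{\prod_{k}\omega_{I,k}(\ovd)^{1+\beta_2}}{\prod_{j\in I}\delta_j^{2+\beta_1}}\le C\,\pi^{\,\alpha+\frac{|P_I|}{|I|}(1+\beta_2)-(2+\beta_1)}=C\pi^{\kappa},$$
where $\kappa>0$ by assumption. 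Because every $\delta_j<1$, we have $\pi\le\min_{i\in I}\delta_i$. Hence $\min_i\delta_i\le\gamma$ implies $C\pi^\kappa\le C\gamma^\kappa$, and choosing $\gamma=(\veps/C)^{1/\kappa}$ gives the claimed inequality after rearranging.

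For (b), take $\theta_0$ and $I$ maximal with $\phi_I(\theta_0)\in\TTI$. The hypothesis supplies $O$ and $C$, and (a) yields exactly the condition \eqref{eq:carlesonbord} in $c)$ of Theorem \ref{result_gen_comp}, with the same neighbourhood $O$. Since $\beta_1,\beta_2\ge-1$, that theorem applies in all four cases (Hardy or Bergman on either side), so $C_\phi:\Abdi{1}\to\Abdi{2}$ is compact.

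The only real issue is the direction of the inequalities when exponents could be negative. The factor $\omega^{1+\beta_2}$ is harmless, since $1+\beta_2\ge0$ preserves the bound from Lemma \ref{lem:produitwk}. We also need $\pi\le\min_i\delta_i$ to turn a smallness condition on one coordinate into smallness of the whole product, and this holds precisely because all $\delta_j\in(0,1)$. I see no other obstacle.
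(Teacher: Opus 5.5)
Your argument is the paper's own. The paper writes $\alpha=2+\beta_1-\frac{\m{P_I}}{\m{I}}(1+\beta_2)+\Delta$ (your $\kappa$ is its $\Delta$), bounds $\prod_k\omega_{I,k}(\ovd)^{1+\beta_2}$ by Lemma \ref{lem:produitwk}, and feeds (a) into $c)\Rightarrow a)$ of Theorem \ref{result_gen_comp}. You are more explicit than the paper about $1+\beta_2\geq 0$ and about $\prod_{j\in I}\delta_j\le\min_{i\in I}\delta_i$.

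\textbf{The gap.} The one nontrivial step is a genuine gap, and the paper shares it. Lemma \ref{lem:produitwk}, as stated for \emph{all} $\ovd\in(0,1)^{\m{I}}$, is false, and (a) is false too. Take $d=3$, $\phi(z)=\bigl(\frac{z_1+z_2}{2},z_3,0\bigr)$, $\theta_0=0$ and $I=\{1,2\}$. Then $P_1=\{1,2\}$, $P_2=\{3\}$, and $\prod_k\omega_{I,k}(\ovd)=\delta_1^2\delta_2$. This exceeds $(\delta_1\delta_2)^{3/2}$ whenever $\delta_1>\delta_2$.

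\textbf{Why (a) fails.} Put $u=\frac{\theta_1+\theta_2}{2}$ and $v=\frac{\theta_1-\theta_2}{2}$, so that $\phi_1(e^{i\theta})=\cos v\,e^{iu}$. With $O=(-1,1)^3$, the set in \eqref{eq:cont1} then has measure $\lesssim\delta_1^{3/2}\delta_2\le\delta_1\delta_2$, uniformly in $\eta$. So the hypothesis of (a) holds with $\alpha=1$ and $\beta_1=\beta_2=0$, since $1+\frac32>2$. The conclusion of (a) would then read $\lambda_3(\ldots)\le\veps\,\delta_2$. But for $\eta=(1,1)$, $\delta_1=\frac12$ and $\delta_2\to0$, the left-hand side is $\geq c\,\delta_2$ with $c>0$ depending only on $O$.

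\textbf{A correct replacement.} Let $m=\m{I}$, order the radii as $\delta_{(1)}\le\cdots\le\delta_{(m)}$, and put $J_i=\{(1),\dots,(i)\}$, $s_i=-\log\delta_{(i)}$, $s_{m+1}=0$. Each $\omega_{I,k}$ with $k\in P_{J_i}\setminus P_{J_{i-1}}$ equals $\delta_{(i)}$. Abel summation then gives
$$-\log\prod_{k=1}^d\omega_{I,k}(\ovd)=\sum_{i=1}^m(s_i-s_{i+1})\m{P_{J_i}}\geq c_I\sum_{i=1}^m(s_i-s_{i+1})\,i=-\log\prod_{j\in I}\delta_j^{c_I},\qquad c_I=\min_{\emptyset\neq J\subset I}\frac{\m{P_J}}{\m{J}}.$$
The exponent $c_I$ is sharp: let $\delta_j\to0$ for $j\in J$ only. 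Hence Lemma \ref{lem:produitwk} holds exactly when $c_I=\frac{\m{P_I}}{\m{I}}$.

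If you replace $\frac{\m{P_I}}{\m{I}}$ by $c_I$ in the hypotheses, your proof of (a), and then of (b), goes through verbatim. As stated, neither your proof nor the paper's establishes (a) or (b). The example does not contradict (b): its hypothesis fails at points $\theta_0$ with $\theta_{0,1}\neq\theta_{0,2}$, where $I=\{2\}$ is maximal, and indeed $C_\phi$ is not compact on $A^2_0(\DD^3)$.
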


\begin{proof}
As $\alpha + \dfrac{\m{P_I}}{\m{I}}(1+\beta_2) > 2+\beta_1$, there exists $\Delta >0, $ such that $\alpha =   2+\beta_1 - \dfrac{\m{P_I}}{\m{I}}(1+\beta_2) + \Delta$. Then by Lemma \ref{lem:produitwk}, we have 
$$\dsp_{j\in I}\delta_j^\alpha =\dsp_{j\in I}\delta_j^{2+\beta_1 - \frac{\m{P_I}}{\m{I}}(1+\beta_2) + \Delta}  \leq \dsp_{j\in I} \delta_j^{\Delta} \cdot \dfrac{\dsp_{j\in I}\delta_j^{2+\beta_1}}{\dsp_{k=1}^d \wik^{1+\beta_2}}. $$
Thus, 
$$\lambda_d(\lbrace \theta \in O : \m{\phi_j(\theta)- \eta_j } < \delta_j,\ j \in I\rbrace)\leq  C  \dsp_{j\in I} \delta_j^{\Delta} \cdot \dfrac{\dsp_{j\in I}\delta_j^{2+\beta_1}}{\dsp_{k=1}^d \wik^{1+\beta_2}}$$ 
and for all $\veps >0,$ there exists $\gamma >0$ such that for all $\eta \in \TTI$ and  all $\ovd \in (0,1)^{\m{I}},\ \min\limits_{i\in I}\delta_i \leq\gamma $ implies  
$$\lambda_d(\lbrace \theta \in O : \m{\phi_j(\theta)- \eta_j } < \delta_j,\ j \in I\rbrace) \leq \veps  \dfrac{\prod\limits_{j\in I}\delta_j^{2+\beta_1}}{\prod\limits_{k=1}^d\omega_{I,k}(\ovd)^{1+\beta_2}} .$$
The second point now follows from the first one and an application of Theorem \ref{result_gen_comp}. 
\end{proof}

\begin{corollary}\label{coro_noncom}
Let $\beta_1,\ \beta_2\geq -1.$ Let $\phi \in\mathcal O( \DD^d,\DD^d)\cap \mathcal{C}^1(\ovD)$. Assume that there exist $I \subset \lbrace 1,\dots,d\rbrace$ such that $\phi_I$ touches the polycircle and 
$\eta \in \TTI$ such that for all $\delta \in (0,1)$ 
$$\lambda_d(\lbrace \theta \in [-\pi,\pi)^d : \m{\phi_j(e^{i\theta})-\eta_j}<\delta,\  j \in I\rbrace) \geq  f(\delta)$$
where 
$$\limsup_{\delta\to 0}\frac{f(\delta)}{\delta^{|I|(2+\beta_1)-|P_I|(1+\beta_2)}}>0.$$
Then $C_\phi : A^2_{\beta_1}(\DD^d) \to A^2_{\beta_2}(\DD^d)$ is not compact. 
This holds in particular with $f(\delta)=\delta^{\alpha |I|}$ when 
$\alpha + \dfrac{\m{P_I}}{\m{I}}(1+\beta_2) \leq 2 +\beta_1.$
\end{corollary}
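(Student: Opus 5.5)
We argue by contraposition through Theorem \ref{result_gen_comp}: it suffices to show that assertion (b) of that theorem fails for the given $I$. Since $\phi_I$ touches the polycircle, (b) applies to this $I$. So it is enough to produce $\veps_0>0$ and, for every $\gamma>0$, some $\ovd\in(0,1)^{|I|}$ with $\min_{i\in I}\delta_i\leq\gamma$ for which the quantity in (b), evaluated at our fixed $\eta$, is at least $\veps_0$. We take all components of $\ovd$ equal, $\delta_j=\delta$ for $j\in I$. Then the set in (b) is exactly the one bounded below by $f(\delta)$ in the hypothesis.

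The key step is to compute $\prod_{k=1}^d\omega_{I,k}(\ovd)$ when all $\delta_j=\delta$. By the definition, $\omega_{I,k}=1$ for $k\notin P_I$. For $k\in P_I$, $\omega_{I,k}$ is a minimum of some $\delta_j$ with $j\in I$, hence equals $\delta$. Therefore
$$\prod_{k=1}^d\omega_{I,k}(\ovd)^{1+\beta_2}=\delta^{|P_I|(1+\beta_2)}, \qquad \prod_{j\in I}\delta_j^{2+\beta_1}=\delta^{|I|(2+\beta_1)}.$$
Substituting, the quantity in (b) is at least
$$\frac{f(\delta)}{\delta^{|I|(2+\beta_1)-|P_I|(1+\beta_2)}}.$$

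By the $\limsup$ hypothesis there are $c>0$ and a sequence $\delta_n\to0$ along which this ratio is at least $c$. Set $\veps_0=c/2$. For any $\gamma$, some $\delta_n\leq\gamma$, and at that $\delta_n$ the quantity in (b) is at least $c>\veps_0$. So (b) fails, and by Theorem \ref{result_gen_comp} the operator $C_\phi$ is not compact. This covers all $\beta_1,\beta_2\geq-1$, including the Hardy cases, since Theorem \ref{result_gen_comp} is stated for that full range.

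For the particular case, take $f(\delta)=\delta^{\alpha|I|}$. The ratio becomes $\delta^{|I|\left(\alpha+\frac{|P_I|}{|I|}(1+\beta_2)-(2+\beta_1)\right)}$. The exponent is $\leq 0$ by assumption, so the ratio is $\geq1$ for $\delta\in(0,1)$ and its $\limsup$ is positive.

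The only delicate point is the exact evaluation of $\omega_{I,k}$ on the diagonal $\delta_j=\delta$. This works because each $P_j$ is defined with respect to a fixed point $\theta$ where $\phi_I(\theta)\in\TT^{|I|}$, so the sets $P_j$, and hence $P_I$, are fixed sets independent of $\ovd$, and the definition of $\omega_{I,k}$ can be applied directly.
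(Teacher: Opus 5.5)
Your proposal is correct and follows the paper's proof. You specialize to the diagonal $\delta_j=\delta$, where $\prod_{k=1}^d\omega_{I,k}(\ovd)=\delta^{|P_I|}$, and use the $\limsup$ hypothesis to violate condition (b) of Theorem \ref{result_gen_comp}, concluding by the contrapositive of (a)$\Rightarrow$(b); you also spell out the particular case $f(\delta)=\delta^{\alpha|I|}$, which the paper leaves implicit.
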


\begin{proof}
When $\delta_1=\cdots=\delta_d=\delta,$ inequality \eqref{eq:carlesonbord} reads 
$$\lambda_d(\lbrace \theta \in [-\pi,\pi)^d : \m{\phi_j(\theta)-\eta_j}<\delta,\   j \in I\rbrace) \frac{\delta^{|P_I|(1+\beta_2)}}{\delta^{|I|(2+\beta_1)}}\leq \veps.$$ However, $\limsup\limits_{\delta\to 0}\frac{f(\delta)}{\delta^{|I|(2+\beta_1)-|P_I|(1+\beta_2)}}>0$ means that there exists $\veps>0 $ such that  for all $\gamma >0, $ there exists $\delta \leq \gamma $ such that $\frac{f(\delta)}{\delta^{|I|(2+\beta_1)-|P_I|(1+\beta_2)}}\geq \veps.  $ Thus our assumptions implies that there exist $\veps >0$ such that for all $\gamma >0$, there exists $\delta\leq \gamma $ such that 
$$\lambda_d(\lbrace \theta \in [-\pi,\pi)^d : \m{\phi_j(\theta)-\eta_j}<\delta,\   j \in I\rbrace) \frac{\delta^{|P_I|(1+\beta_2)}}{\delta^{|I|(2+\beta_1)}}\geq \veps.$$
 We deduce that $C_\phi : \Abdi{1} \rightarrow \Abdi{2}$ is not compact by contrapositive of the implication $a) \Rightarrow b)$ of Theorem \ref{result_gen_comp}. 

\end{proof}
 \subsection{Applications}
Theorem \ref{result_gen_comp} has several applications. The first one is a stability  result for compactness:  

\begin{theorem}\label{thm:stability}
Let $\phi \in \mathcal{O}(\DD^d,\DD^d) \cap  \mathcal{C}^1(\ovD)$ and $\beta_1, \beta_2\geq -1$. Assume that $C_\phi$ compactly maps  $A_{\beta_1}^2(\DD^d)$ into $A_{\beta_2}^2(\DD^d)$. Then for all $\beta'_1\geq \beta_1,$ $C_\phi$ compactly maps  $A_{\beta_1'}^2(\DD^d)$ into $A_{\beta_2'}^2(\DD^d)$ where 
$$\beta_2'=\frac{\beta_1'(\beta_2+2)+2(\beta_2-\beta_1)}{\beta_1+2}.$$
In particular, if $C_\phi$ is compact on $A_{\beta_1}^2(\DD^d),$ then it is compact on $A_{\beta_1'}^2(\DD^d).$
\end{theorem}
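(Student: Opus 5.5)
The plan is to use the characterisation (b) of Theorem \ref{result_gen_comp} in both directions. Compactness of $C_\phi : A^2_{\beta_1}(\DD^d)\to A^2_{\beta_2}(\DD^d)$ gives (b) for the pair $(\beta_1,\beta_2)$. It then suffices to show that (b) for $(\beta_1,\beta_2)$ implies (b) for $(\beta_1',\beta_2')$, after which the implication (b) $\Rightarrow$ (a) yields compactness for the new pair. The identity $\beta_2'+2 = \frac{(\beta_1'+2)(\beta_2+2)}{\beta_1+2}$ suggests setting $t=\frac{\beta_1'+2}{\beta_1+2}\geq 1$, so that $2+\beta_2' = t(2+\beta_2)$ and $2+\beta_1'=t(2+\beta_1)$. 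The special case $\beta_1=\beta_2$ gives $\beta_2'=\beta_1'$, which is the final assertion.

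Fix $I$ such that $\phi_I$ touches the polycircle, and write $L(\eta,\ovd)=\lambda_d(\{\theta : \m{\phi_j(\theta)-\eta_j}<\delta_j,\ j\in I\})$, $\Omega=\prod_k\wik$ and $\Delta=\prod_{j\in I}\delta_j$. We know that $Q:=L\,\Omega^{1+\beta_2}/\Delta^{2+\beta_1}\leq\veps$ whenever $\min\delta_i\leq\gamma$. We want to bound
$$Q' := L\,\Omega^{1+\beta_2'}\Delta^{-(2+\beta_1')} = Q\cdot \Omega^{\beta_2'-\beta_2}\Delta^{-(t-1)(2+\beta_1)}.$$
Since $\beta_2'-\beta_2=(t-1)(2+\beta_2)$, this becomes
$$Q' = Q\cdot\left(\frac{\Omega^{2+\beta_2}}{\Delta^{2+\beta_1}}\right)^{t-1}.$$
Compactness implies boundedness, so Lemma \ref{lem:wikcontinu} gives $\Omega^{2+\beta_2}\leq D\,\Delta^{2+\beta_1}$. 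Hence $Q'\leq D^{t-1}Q\leq D^{t-1}\veps$ under the same smallness condition on $\min\delta_i$, and this is exactly (b) for $(\beta_1',\beta_2')$.

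The main point to check is the case $\beta_2'<\beta_2$ (possible if $\beta_1>\beta_2$). This causes no problem, because the algebra above never needed a sign on $\beta_2'-\beta_2$; $t\geq1$ is all that makes the exponent $t-1$ of the bounded quotient nonnegative. One also needs $\beta_2'\geq -1$ for the target space to make sense. From $2+\beta_2'=t(2+\beta_2)\geq 2+\beta_2\geq 1$ we get $\beta_2'\geq-1$. The Hardy-space endpoints are already covered by Theorem \ref{result_gen_comp}, since it allows $\beta_i\geq -1$.
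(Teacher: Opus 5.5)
Your proof is correct and follows essentially the paper's route; the paper only says ``same as Theorem 2.13 of \cite{BD26}'', and the inclusion of Lemma \ref{lem:wikcontinu} suggests that is the intended argument. You pass through condition (b) of Theorem \ref{result_gen_comp}, write the new quotient as the old one times $\bigl(\prod_k\omega_{I,k}(\ovd)^{2+\beta_2}/\prod_{j\in I}\delta_j^{2+\beta_1}\bigr)^{t-1}$ with $t=(\beta_1'+2)/(\beta_1+2)\geq 1$, and bound that factor uniformly via Lemma \ref{lem:wikcontinu}, using that compactness implies boundedness. Your checks that $\beta_2'\geq -1$ and that the sign of $\beta_2'-\beta_2$ does not matter are the right ones.
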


\begin{proof}
The proof is the same as the proof of Theorem $2.13$ of \cite{BD26}. 
\end{proof}

The next result gives a characterisation of compactness for some classes of functions:

\begin{theorem}\label{thm:examples}
Let $\beta_1,\beta_2\geq -1.$ Let $\varphi\in\mathcal O(\DD,\DD)\cap \mathcal C^1(\overline\DD)$. Assume that, for any $z\in \overline\DD,$ $|\varphi(z)|=1$ if and only if $z=1$ and that there exist $\kappa>1$ and $c>0$ such that
$$|\varphi(e^{i\theta})|=1-c|\theta|^\kappa +o(|\theta|^\kappa).$$ Let $d\geq k\geq 1$ and let us set
$$\phi(z)=(\varphi(z_1)\cdots \varphi(z_k)z_{k+1}\cdots z_{d},\dots,\varphi(z_1)\cdots \varphi(z_k)z_{k+1}\cdots z_{d},0,\dots,0)$$ where the first $q$ coordinates, $q\in\{1,\dots,d\}$, are identical. Then $C_\phi : \Abd \rightarrow \Abd $ is compact if and only if 
$$d\beta_2 > \left(2q-d-1-\frac k\kappa\right)+q\beta_1.$$
\end{theorem}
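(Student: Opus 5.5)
The natural tool is Theorem \ref{result_gen_comp}, used in the form of criterion (c) for sufficiency and of Corollary \ref{coro_noncom} for necessity, with $C_\phi$ acting from $A^2_{\beta_1}(\DD^d)$ to $A^2_{\beta_2}(\DD^d)$. Write $\psi(z)=\varphi(z_1)\cdots\varphi(z_k)z_{k+1}\cdots z_d$, so that $\phi_1=\dots=\phi_q=\psi$ and the other coordinates vanish.

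\emph{Touching set and $P_I$.} On $\ovD$ we have $|\psi(z)|=1$ if and only if $z_1=\dots=z_k=1$ and $|z_{k+1}|=\dots=|z_d|=1$. So the only subsets $I$ for which $\phi_I$ touches the polycircle are nonempty subsets of $\I{q}$, and the maximal one is $I=\I{q}$. The touching points on $\TT^d$ are $\theta=(0,\dots,0,\theta_{k+1},\dots,\theta_d)$. At such a point $\partial\psi/\partial z_j$ is nonzero for every $j$:
\begin{itemize}
\item for $j\le k$ it is $\varphi'(1)$ times unimodular factors, and $\varphi'(1)\neq0$ by the Julia--Carath\'eodory (or Hopf) lemma, since $\varphi$ is $\mathcal C^1$, nonconstant and $|\varphi(1)|=1$;
\item for $j>k$ it is a product of unimodular numbers.
\end{itemize}
Hence $P_I=\I{d}$ for every such $I$, and $\omega_{I,k}(\ovd)=\min_{j\in I}\delta_j=:\delta$ for all $k$.

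\emph{Measure estimate.} The key step is to show that, uniformly in $\eta\in\TT^q$,
\[
\lambda_d(\lbrace\theta\in O: |\psi(\theta)-\eta_j|<\delta_j,\ j\in I\rbrace)\lesssim \delta^{1+k/\kappa},
\]
where $O$ is a neighbourhood of a touching point, together with the matching lower bound $\gtrsim\delta^{1+k/\kappa}$ when all $\delta_j=\delta$ and $\eta=(\psi(\theta_0),\dots,\psi(\theta_0))$. Since every $\phi_j$, $j\in I$, equals $\psi$, the set is contained in $\lbrace|\psi-\eta_{j_0}|<\delta\rbrace$, where $j_0$ realizes the minimum. This condition splits into two constraints:
\begin{itemize}
\item the modulus constraint $1-|\psi|\lesssim\delta$, which by the expansion $|\varphi(e^{i\theta})|=1-c|\theta|^\kappa+o(|\theta|^\kappa)$ forces $|\theta_i|\lesssim\delta^{1/\kappa}$ for $i\le k$, a factor $\delta^{k/\kappa}$;
\item the argument constraint, namely that $\sum_{i\le k}\arg\varphi(e^{i\theta_i})+\theta_{k+1}+\dots+\theta_d$ lies within $O(\delta)$ of $\arg\eta_{j_0}$. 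When $d>k$ this is a slab of width $\delta$ in the variable $\theta_d$, giving the extra factor $\delta$. Fubini gives the upper bound, and taking a product box gives the lower bound.
\end{itemize}

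\emph{Sufficiency.} Plug this into (c). Using $\prod_{j\in I}\delta_j^{2+\beta_1}\ge\delta^{q(2+\beta_1)}$, the quantity in \eqref{eq:carlesonbord} is bounded by
\[
\delta^{\,1+k/\kappa+d(1+\beta_2)-q(2+\beta_1)}.
\]
This exponent is positive exactly when $d\beta_2>(2q-d-1-\frac k\kappa)+q\beta_1$, and then the quantity tends to $0$ as $\min\delta_i\to0$, so $C_\phi$ is compact.

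\emph{Necessity.} Conversely, when the inequality fails, apply Corollary \ref{coro_noncom} with $f(\delta)=c\,\delta^{1+k/\kappa}$. There
\[
|I|(2+\beta_1)-|P_I|(1+\beta_2)=q(2+\beta_1)-d(1+\beta_2)\ge 1+\tfrac k\kappa,
\]
so the $\limsup$ in that corollary is positive and $C_\phi$ is not compact.

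\emph{Main obstacle.} The delicate point is the argument constraint in the measure estimate, especially the degenerate case $d=k$, where no free linear variable $\theta_{k+1},\dots,\theta_d$ is available. There I would use that $\arg\varphi(e^{i\theta})=a\theta+o(\theta)$ with $a\neq0$ and carefully compare the slab of width $\delta$ with the region $\sum|\theta_i|^\kappa\lesssim\delta$, which is much wider since $\kappa>1$. This comparison needs checking, as it is where the exponent $1+k/\kappa$ must be confirmed. A second point to handle with care is making the $o(|\theta|^\kappa)$ error uniform on a fixed neighbourhood $O$, so that the constants do not depend on $\eta$.
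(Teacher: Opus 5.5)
For $d>k$ your argument is correct and follows essentially the same route as the paper, which defers to Theorem~2.17 of \cite{BD26}. The steps are: $I=\I{q}$ is the only maximal touching set; $P_I=\I{d}$ by Julia--Carath\'eodory, so $\omega_{I,k}(\ovd)=\min_{j\in I}\delta_j$; the level set has measure $\approx\delta^{1+k/\kappa}$; and Theorem~\ref{result_gen_comp} with Corollary~\ref{coro_noncom} concludes. One small correction: for the lower bound, integrate in the linear variable $\theta_d$ with Fubini rather than using a literal product box, since $\sum_{i\le k}\arg\varphi(e^{i\theta_i})$ varies by $\approx\delta^{1/\kappa}\gg\delta$ on the box.

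The genuine gap is the case $d=k$. You flag it but expect to confirm $\delta^{1+k/\kappa}$ there, and that cannot be done because the estimate is false. When $d=k$, every variable is already confined by the modulus constraint to $|\theta_i|\lesssim\delta^{1/\kappa}$. The argument constraint $\bigl|\sum_{i=1}^k\arg\varphi(e^{i\theta_i})-\arg\eta\bigr|\lesssim\delta$ is a slab of width $\approx\delta$ whose $\theta_k$-derivative is $\approx a\neq0$. Integrate first in $\theta_k$, over a box of side $\epsilon\delta^{1/\kappa}$ in $(\theta_1,\dots,\theta_{k-1})$. The slab then \emph{replaces} one factor $\delta^{1/\kappa}$ by $\delta$ rather than contributing an extra factor $\delta$. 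Hence $\lambda_d\approx\delta^{1+(k-1)/\kappa}\gg\delta^{1+k/\kappa}$, with a matching lower bound.

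Your own scheme then gives, for $d=k$, compactness if and only if $d(1+\beta_2)+1+\frac{k-1}{\kappa}>q(2+\beta_1)$, which is not the displayed inequality. Two checks confirm this:
\begin{itemize}
\item Take $d=k=q=1$ and $\beta_1=\beta_2=\beta$. The displayed condition reads $\beta>\beta-\frac1\kappa$, which always holds. Yet $\lambda_1(\{\theta:|\varphi(e^{i\theta})-\varphi(1)|<\delta\})\gtrsim\delta$, and Corollary~\ref{coro_noncom} (with $|I|(2+\beta)-|P_I|(1+\beta)=1$) shows $C_\varphi$ is not compact. This is as it must be, since $\varphi$ has a finite angular derivative at $1$.
\item The paper's Example~1 ($d=k=3$, $q=2$, $\kappa=4$) has threshold $\beta>-\frac12$, which matches $(k-1)/\kappa$. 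The displayed formula would predict $\beta>-\frac34$.
\end{itemize}

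So your proof establishes the statement only for $d>k$. For $d=k$ the statement as written is false, so no argument can close that case. The exponent $k/\kappa$ must be replaced by $(k-1)/\kappa$, or uniformly for all $d\ge k$ by $\min(k,d-1)/\kappa$.
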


\begin{proof}
The proof is the same as Theorem $2.17$ of \cite{BD26}
\end{proof}

\vspace{5mm}

\section{A geometric characterisation of compactness}

This part is dedicated to the proofs of Theorem $1.1$ and $1.2$.
\subsection{Technical lemmas}
First, we need some technical lemmas linking $\dsp_{k = 1}^d \wik $ and $\dsp_{j\in I}\delta_j  $ and a new definition.
\begin{definition}
Let $d\geq 1$ and $ j\in \I{d}, $ we define $T_{j,d} = \lbrace z \in \ovD : \exists I\subset\I{d},\ \m{I} = j,\  \m{z_k} = 1, \ \forall k \in I  \rbrace.  $  Notice that $\TT^d = T_{d,d} ,$ $T_{d+1,d} = \emptyset$ and  for all $j = 1,\dots,d-1,\ T_{j+1,d}\subset T_{j,d}.$ 	
\end{definition}

\begin{lemma}\label{max_principle}
Let $\phi \in \mathcal{O}(\DD^d,\DD^d) \cap \mathcal{C}^1(\ovD)$ be such that there exist $I \subset \I{d}$ and  $\xi \in \ovD$ such that $\phi_I(\xi ) \in \TTI$. Assume $\m{P_I} = k. $ Then $\xi \in  T_{k,d}$.
\end{lemma}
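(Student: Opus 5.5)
The plan is a boundary-point / maximum-modulus argument applied coordinatewise. Here $P_I=\bigcup_{j\in I}P_j$ collects the variables $z_k$ on which some $\phi_j$, $j\in I$, has a nonzero derivative at the point where $\phi_I$ reaches $\TT^{|I|}$. The goal is to show that for every $k\in P_I$ one has $\m{\xi_k}=1$. Since $\m{P_I}=k$ distinct coordinates would then be unimodular, this gives $\xi\in T_{k,d}$. So we fix $j\in I$ and $l\in P_j$, and prove $\m{\xi_l}=1$ by contradiction.

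Suppose $\m{\xi_l}<1$. Consider the one-variable slice
\[
g(\zeta)=\phi_j(\xi_1,\dots,\xi_{l-1},\zeta,\xi_{l+1},\dots,\xi_d),\qquad \zeta\in\overline{\DD}.
\]
Since $\phi_j$ is continuous on $\ovD$, maps $\DD^d$ into $\DD$, and is holomorphic in each variable separately, $g$ is continuous on $\overline{\DD}$ with $\m{g}\le 1$. To see that $g$ is holomorphic in $\DD$ even when some $\m{\xi_m}=1$ with $m\neq l$, I would approximate: $g$ is a locally uniform limit of the holomorphic functions $\zeta\mapsto\phi_j(r\xi_1,\dots,\zeta,\dots,r\xi_d)$ as $r\to1^-$, using uniform continuity of $\phi_j$ on $\ovD$. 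Now $g(\xi_l)=\phi_j(\xi)$ has modulus $1$ at the interior point $\xi_l\in\DD$. By the maximum modulus principle $g$ is constant, so $g'(\xi_l)=\partial\phi_j(\xi)/\partial z_l=0$, contradicting $l\in P_j$.

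The main obstacle, and the step I would handle carefully, is the meaning of $P_j$. The definition of $P_\varphi$ is written with a point $\theta$ on the torus where $\varphi(\theta)\in\TT$, while the lemma is stated for an arbitrary $\xi\in\ovD$. I would interpret $P_j$ as computed at the point $\xi$ in question, with the derivative at boundary points understood through the $\mathcal C^1(\ovD)$ extension. If instead $P_j$ is defined at some other reference point, one needs that $P_j$ does not depend on the touching point. I would try to justify this by the same slice argument: whenever $\m{\xi_l}<1$, $\phi_j$ is constant in $z_l$ near such points, so the derivative vanishes there. This is exactly what the contradiction step needs.

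Finally, the count is immediate: the indices in $P_I$ are distinct elements of $\I{d}$ at which $\m{\xi_k}=1$. Taking the set $J:=P_I$, which has cardinality $k$, in the definition of $T_{k,d}$ gives $\xi\in T_{k,d}$. If $k=0$ the statement is vacuous, with the convention $T_{0,d}=\ovD$.
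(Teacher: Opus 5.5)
Your proof is correct and matches the paper's. Both fix $j\in I$ and $l\in P_j$ and note that the slice $\zeta\mapsto\phi_j(\xi_1,\dots,\zeta,\dots,\xi_d)$ is holomorphic on $\DD$ by Weierstrass' theorem; since it reaches modulus $1$ at $\xi_l$, the maximum principle forces $\m{\xi_l}=1$, for otherwise the slice is constant and $\partial\phi_j(\xi)/\partial z_l=0$, contradicting $l\in P_j$. Like you, the paper implicitly reads $P_j$ at the point $\xi$ itself, so your fallback about independence of the reference point is unnecessary, and its claim is false in general (only the slice through $\xi$ is constant, not $\phi_j$ near $\xi$), so you should drop it.
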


\begin{proof}
Let $I \subset \I{d}$ and  $\xi = (\xi_1,\dots,\xi_d) \in \ovD$ be such that $\phi_I(\xi ) \in \TTI$. For $j \in I$, we consider $\phi_j : \DD^d \rightarrow \DD$. We know that $\phi_j(\xi) \in \TT$. Then for $p \in P_j$, we consider $\phi_{p,j} : z_p \in \DD \mapsto \phi_j(\xi_1,\dots,\xi_{p-1},z_p,\xi_{p+1},\dots,\xi_d)$. $\phi_{p,j} $ is holomorphic on $\DD$ by Weierstrass' theorem and $\phi_{p,j}(\xi_p) \in \TT$. Thus by the maximum principle, we deduce that $\xi_p\in \TT$. We can do the same for every $p \in P_j$ and for every $j \in  P_I$ thus $\xi \in T_{k,d}$. 
\end{proof}

\begin{lemma}\label{lemme_recurrence}
Let $d \geq 2$. Let $\phi \in \mathcal{O}(\DD^d,\DD^d) \cap \mathcal{C}^1(\ovD).  $ Assume that
$$\forall  j= 1,\dots,d-1, \ \phi(\ovD \setminus T_{j+1,d}) \cap T_{j,d} = \emptyset. $$  
Let $I\subset \I{d}$ be  such that $\phi_I$ touches the polycircle with $\m{I}\leq d-1$. Then  for all $r \in \I{\m{I}}$ and  all $ \ovd \in (0,1)^{\m{I}}  $ such that $\delta_1 \leq \dots \leq  \delta_{\m{I}}, $
\begin{equation}\label{eq_lemme}
\dsp_{k \in P_1 \cup \dots\cup P_r}\wik  \leq  \left\{ 
\begin{array}{rcl}
  \delta_1^2\delta_2\cdots \delta_r^n &\textrm{ if } r > 1 \\
\delta_1^{\m{P_1}}& \textrm{ if  } r=1 
\end{array}
\right.
\end{equation}

where $n = \m{P_1 \cup \dots \cup P_r} - r. $
\end{lemma}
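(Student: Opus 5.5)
The plan is to reduce the estimate to a counting statement about the sets $P_j$. I will first show that the hypothesis forces a Hall-type expansion property of the $P_j$. I will then run an elementary rearrangement argument based on the ordering $\delta_1\le\dots\le\delta_{|I|}\le 1$. After relabelling, $I=\{1,\dots,|I|\}$, and $P_J=\bigcup_{j\in J}P_j$ for $J\subset I$.

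\textbf{Step 1 (expansion).} I will prove that for every nonempty $J\subset I$ with $|J|\le d-1$ such that $\phi_J$ touches the polycircle, $|P_J|\ge |J|+1$. The argument is by contradiction, so assume $|P_J|\le |J|$. Let $\xi$ be a point with $\phi_J(\xi)\in\TT^{|J|}$, and let $p\notin P_J$ with $|\xi_p|=1$.

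Consider, for $j\in J$, the one-variable slice $z_p\mapsto\phi_j(\xi_1,\dots,z_p,\dots,\xi_d)$. It is a holomorphic self-map of $\DD$ that is $\mathcal C^1$ up to the boundary, takes a unimodular value at $\xi_p$, and has zero derivative there. By Julia's lemma (the angular derivative at such a point is strictly positive unless the map is a unimodular constant), this slice is constant. Hence I may replace $\xi_p$ by $0$ and $\phi_J$ stays in $\TT^{|J|}$.

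I will iterate this coordinate by coordinate, arguing as in Lemma \ref{max_principle}. The sets $P_j$ are defined via derivatives at a touching point, so I must check that $P_J$ does not grow along the way; this is the delicate point. The iteration produces $\xi'$ with $\phi_J(\xi')\in\TT^{|J|}$ whose unimodular coordinates all lie in $P_J$. Then $\xi'\notin T_{|J|+1,d}$, while $\phi(\xi')\in T_{|J|,d}$, which contradicts the hypothesis $\phi(\ovD\setminus T_{|J|+1,d})\cap T_{|J|,d}=\emptyset$. Applied to $J=\{1,\dots,s\}$, this gives $m_s:=|P_1\cup\dots\cup P_s|\ge s+1$ for all $s\le |I|\le d-1$.

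\textbf{Step 2 (pointwise bounds and rearrangement).} Fix $k\in P_1\cup\dots\cup P_r$ and let $s$ be the first index with $k\in P_s$. By definition of $\wik$ as a minimum over those $j$ with $k\in P_j$, we get $\wik\le\delta_s$. Therefore
$$\prod_{k\in P_1\cup\dots\cup P_r}\wik\le \delta_1^{m_1}\delta_2^{m_2-m_1}\cdots\delta_r^{m_r-m_{r-1}}.$$
When $r=1$ this is exactly $\delta_1^{|P_1|}$.

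For $r>1$, the target $\delta_1^2\delta_2\cdots\delta_{r-1}\delta_r^{n}$ has total exponent $2+(r-2)+n=m_r$, and its cumulative exponents are $s+1$ for $s<r$. Set $a_s=-\log\delta_s$, a nonnegative nonincreasing sequence. An Abel summation gives
$$\sum_s e_s a_s=\sum_{s<r}E_s(a_s-a_{s+1})+E_r a_r,$$
where $E_s$ denotes the cumulative exponent up to $s$. Our cumulative exponents $E_s=m_s$ dominate the target ones ($m_s\ge s+1$ by Step 1) and have the same total $m_r$. Since each $a_s-a_{s+1}\ge0$, our product is at most the target, which yields \eqref{eq_lemme}.

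\textbf{Main obstacle.} I expect Step 1 to be the hard part, specifically making the Julia-lemma freezing argument rigorous when several coordinates are moved simultaneously. If this iteration is awkward, I would first try moving all coordinates $p\notin P_J$ at once along the polydisc slice. In that slice $\phi_j$, $j\in J$, is a holomorphic map into $\DD$ whose gradient in those variables vanishes at a unimodular boundary value. The several-variable Julia lemma, or the one-variable lemma applied along each complex line through $\xi$, should again force constancy of $\phi_J$ on that slice.
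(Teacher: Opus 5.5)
Your proposal is correct and follows essentially the paper's route. The paper likewise first shows $|P_J|>|J|$ for all $J\subset I$ by replacing the coordinates of a touching point outside $P_J$ with $0$ and contradicting the hypothesis. It then uses the ordering $\delta_1\le\dots\le\delta_{|I|}$ by induction on $r$, via $\delta_r^n\le\delta_r\delta_{r+1}^{n-1}$, which is the same majorization as your Abel summation.

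Your fallback for the delicate point is the right fix, and it supplies the justification the paper omits. Let $Q$ be the complement of $P_J$ and apply the one-variable Julia lemma to $\lambda\mapsto\phi_j(\xi_{P_J},\lambda\xi_{Q})$. Its derivative at $\lambda=1$ is $\sum_{q\in Q}\xi_q\,\partial_q\phi_j(\xi)=0$, so $\phi_j(\xi_{P_J},0)=\phi_j(\xi)\in\TT$ in one step, and no coordinate-by-coordinate iteration is needed.
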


\begin{proof}
Let $I \subset \I{d}$ be  such that $\phi_I$ touches the polycircle and $\m{I} \leq d-1$, our assumptions imply that  \begin{equation}\label{hypothese}
\forall J \subset I ,\  \m{P_J} > \m{J}
\end{equation}
so $n$ is always greater than $1$. Let us explain this inequality. To do that, we are going to argue by contrapositive. Let $J \subset I $ with $\m{J}  =k \leq d-1$ and $\m{P_J} = k-p $ with $p\in \{0,\dots,k-1\}$. We have that $\phi_J$ touches the polycircle. Then there exists $\xi \in \ovD$ such that $\phi_J(\xi) \in \TT^k$. Thus, by Lemma \ref{max_principle}, $\xi \in T_{k-p,d}$. Taking $\widetilde{\xi} = \left\{ \begin{array}{rlc}
\xi_k \in \TT & \text{ if } k \in P_J\\
0 &\text{ othertwise}

\end{array}
\right.$, we have that there exists $\xi \in T_{k-p,d} \setminus T_{k-p+1,d}$ such that $\phi(\xi) \in  T_{k,d} \subset T_{k-p,d}$ and our condition with $j=k-p$ is not satisfied. Thus, $\m{P_J} > \m{J}$.\\

Let  $\ovd \in (0,1)^{\m{I}}$ be such that $\delta_1\leq \dots \leq \delta_{\m{I}}. $ We will proceed by induction on $r \in \I{\m{I}}. $ For  $r  = 1, $ the result is obvious. For $r= 2$, we have 
$$ \dsp_{k \in P_1 \cup P_2} \wik \leq \dsp_{k \in P_1} \wik \dsp_{k \in P_2\setminus P_1} \wik \leq \delta_1^{\m{P_1}} \delta_2^{\m{P_2\setminus P_1}} \leq \delta_1^2 \delta_2^{\m{P_2\setminus P_1}+\m{P_1}-2} = \delta_1^2 \delta_2^{\text{card}(P_1\cup P_2)-2}. $$ Now, let $r\in \lbrace 2,\dots ,\m{I}-1\rbrace$ and assume that $(\ref{eq_lemme})$ is satisfied for $r$ and let us prove it for $r+1$.
We have $$P_1\cup \dots \cup P_{r+1} = P_1\cup \dots \cup P_r \sqcup (P_{r+1}\setminus (P_1\cup \dots \cup P_r) )$$ and $\text{card}(P_1\cup \dots \cup P_{r+1}) = \text{card}(P_1\cup \dots \cup P_r) + \text{card}(P_{r+1}\setminus (P_1\cup \dots \cup P_r)) . $ Therefore, 
\begin{align*}
\dsp_{k \in P_1\cup \dots \cup P_{r+1}} \wik = \dsp_{k \in P_1\cup \dots \cup P_{r}} \wik \dsp_{k \in  P_{r+1}\setminus ( P_1\cup \dots \cup P_{r})} \wik &\leq \delta_1^2\delta_2\cdots\delta_r^n \delta_{r+1}^{\text{card}(P_{r+1}\setminus (P_1\cup \dots \cup P_r)) } \\
&\leq \delta_1^2\delta_2\cdots\delta_r\delta_{r+1}^{n-1+\text{card}(P_{r+1}\setminus (P_1\cup \dots \cup P_r)) },  
\end{align*}
with 
\begin{align*}
    n-1+\text{card}(P_{r+1}\setminus (P_1\cup \dots \cup P_r)) & =  \text{card}(P_1\cup \dots \cup P_r) -r-1 + \text{card}(P_{r+1}\setminus (P_1\cup \dots \cup P_r))  \\
    &=\text{card}(P_1\cup \dots \cup P_{r+1})-(r+1).  
\end{align*}
\end{proof}

\begin{corollary}\label{lemme_comp}
Let $d\geq 2$. Let $\phi \in \mathcal{O}(\DD^d,\DD^d) \cap \mathcal{C}^1(\ovD).$ Assume that  $$\forall j = 1,\dots, d,\  \phi(\ovD \setminus T_{j+1,d}) \cap T_{j,d} = \emptyset. $$
Let $I \subset \I{d} $ be such that $\phi_I$ touches the polycircle then for all $\ovd \in \oii, $
\begin{equation}\label{eq_coro_comp}
\dfrac{\dsp_{k=1}^d\wik}{\dsp_{j\in I }\delta_j} \leq \min\limits_{ j \in I} \delta_j. 
\end{equation}
\end{corollary}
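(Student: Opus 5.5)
The plan is to reduce \eqref{eq_coro_comp} to Lemma \ref{lemme_recurrence} applied with $r=\m{I}$, after disposing of the case $\m{I}=d$ and reordering the $\delta_j$.

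\emph{Step 1: the case $\m{I}=d$ is vacuous.} Taking $j=d$ in the hypothesis and using $T_{d+1,d}=\emptyset$ and $T_{d,d}=\TT^d$, we get $\phi(\ovD)\cap\TT^d=\emptyset$. So $\phi_I$ cannot touch the polycircle when $I=\I{d}$, and we may assume $\m{I}=m\leq d-1$.

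\emph{Step 2: reduce to $P_I$ and reorder.} Set $m=\m{I}$. By definition, $\wik=1$ for $k\notin P_I$, so
$$\dsp_{k=1}^d\wik=\dsp_{k\in P_I}\wik .$$
Both sides of \eqref{eq_coro_comp}, and the quantities $\wik$ themselves, are symmetric under relabelling the indices of $I$. We may therefore index $I$ so that its elements correspond to $1,\dots,m$ with $\delta_1\leq\dots\leq\delta_m$. Then $P_I=P_1\cup\dots\cup P_m$ and $\min_{j\in I}\delta_j=\delta_1$.

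\emph{Step 3: apply Lemma \ref{lemme_recurrence} with $r=m$.} Its hypothesis (for $j=1,\dots,d-1$) is contained in ours. The proof of that lemma also yields \eqref{hypothese}, namely $\m{P_J}>\m{J}$ for all $J\subset I$; in particular $n=\m{P_I}-m\geq 1$.

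If $m=1$, then $\m{P_1}\geq 2$, and since $\delta_1<1$,
$$\dfrac{\dsp_{k}\wik}{\delta_1}\leq \delta_1^{\m{P_1}-1}\leq \delta_1 .$$
If $m>1$, I read \eqref{eq_lemme} as the bound $\delta_1^2\delta_2\cdots\delta_{m-1}\delta_m^{n}$. Dividing by $\delta_1\cdots\delta_m$ gives $\delta_1\delta_m^{n-1}$. Since $\delta_m<1$ and $n\geq 1$, this is at most $\delta_1$, which is \eqref{eq_coro_comp}.

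\emph{Main obstacle.} The only delicate point is making sure the exponent bookkeeping in Lemma \ref{lemme_recurrence} really gives $n\geq1$. This rests on the strict inequality $\m{P_J}>\m{J}$ from \eqref{hypothese}, which comes from Lemma \ref{max_principle} together with the hypothesis for $j=\m{P_J}$. I would recheck that this derivation applies to $J=I$ itself, not only to proper subsets. I would also check that the relabelling in Step 2 is compatible with how $\wik$ is defined as a minimum over $\lbrace \delta_j : j\in I,\ k\in P_j\rbrace$.
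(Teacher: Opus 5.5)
Your proposal is correct and matches the paper's proof: order the $\delta_j$, apply Lemma \ref{lemme_recurrence} with $r=\m{I}$, divide to get $\delta_1\delta_{\m{I}}^{n-1}\leq\delta_1$, and handle $\m{I}=1$ separately via \eqref{hypothese}. Your Step 1 is a point the paper leaves implicit: the $j=d$ hypothesis rules out $\m{I}=d$, which is needed for the lemma's standing assumption $\m{I}\leq d-1$. As for your worry, the derivation of \eqref{hypothese} in the lemma's proof does cover $J=I$, since it only requires $\m{J}\leq d-1$.
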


\begin{proof}
Let $I \subset\I{d}$ be such that $\phi_I$ touches the polycircle and let $\ovd\in (0,1)^{\m{I}}.$ Without loss of generality, we may assume $\delta_1 \leq \dots \leq \delta_{\m{I}}. $ Then applying Lemma \ref{lemme_recurrence}, we get for $\m{I} \geq 2, $
$$\dfrac{\dsp_{k=1}^d \wik}{ \dsp_{j\in I }\delta_j} \leq \dfrac{\delta_1^2\delta_2\cdots\delta_{\m{I}}^n}{\delta_1\delta_2\cdots \delta_{\m{I}}} \leq \delta_1\delta_{\m{I}}^{n-1} \leq \delta_1 = \min\limits_{j\in I} \delta_j.$$
If $\m{I} = 1 , $ for example $I = \{1\}, $ we have 
$$\dfrac{\dsp_{k=1}^d \wik}{ \dsp_{j\in I }\delta_j}   = \dfrac{\delta_1^{\m{P_1}}}{\delta_1} = \delta_1^{\m{P_1}-1} \leq \delta_1,$$
because $(\ref{hypothese})$ implies $\m{P_1}-1 \geq 1. $
\end{proof}

  \begin{lemma}\label{lemme_cont}  
Let $d\geq 2.$  Let $\phi \in \mathcal{O}(\DD^d,\DD^d) \cap \mathcal{C}^1(\ovD) $ . Let $I\subset \I{d} $ be such that $\phi_I$ touches the polycircle and $\m{I} \leq d-1. $  Assume that $1 \leq \m{I } < \m{P_I} \leq d$.   Then for all $\ovd \in \oii,$
$$ \dfrac{\dsp_{j\in I } \delta_j^{d-1}}{\dsp_{k=1}^d \wik^{d-2}} \geq \min\limits_{j \in I} \delta_j .  $$
\end{lemma}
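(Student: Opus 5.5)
The plan is to reduce everything to Lemma \ref{lem:produitwk} and then do an elementary comparison of powers of the $\delta_j$. Write $m=\m{I}$, $P=\prod_{j\in I}\delta_j\in(0,1)$ and $\delta_{\min}=\min_{j\in I}\delta_j$.

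\textbf{Step 1.} Since $d-2\geq 0$, Lemma \ref{lem:produitwk} gives
$$\prod_{k=1}^d\wik^{d-2}\leq P^{(d-2)\frac{\m{P_I}}{m}}.$$
The hypothesis $\m{P_I}\geq m+1$ together with $P<1$ yields the further bound $P^{(d-2)(1+\frac1m)}$. The claimed inequality will therefore follow once we show
$$P^{(d-2)(1+\frac1m)}\leq \frac{P^{d-1}}{\delta_{\min}},\qquad\text{i.e.}\qquad P^{e}\leq \delta_{\min}^{-1}\quad\text{with } e=\frac{d-2}{m}-1 .$$

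\textbf{Step 2.} This is a case split on the sign of $e$.
\begin{itemize}
\item If $e\geq 0$, then $P^e\leq 1\leq \delta_{\min}^{-1}$ because all $\delta_j<1$.
\item If $e<0$, we use $P\geq \delta_{\min}^{m}$, which gives $P^e\leq \delta_{\min}^{me}=\delta_{\min}^{d-2-m}$. The assumption $m\leq d-1$ gives $d-2-m\geq -1$, so this is at most $\delta_{\min}^{-1}$.
\end{itemize}
In fact $e<0$ forces $m=d-1$, so in that case the bound is exactly $\delta_{\min}^{-1}$. The degenerate case $d=2$ (where the exponent $d-2$ vanishes and $m=1$) is covered by the same computation: the left-hand side is simply $\delta_1\geq\delta_1$.

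\textbf{Main point.} The only delicate step is the extremal case $m=d-1$. There the exponent $e=-\frac1{d-1}$ is negative, and we must use the lower bound $P\geq\delta_{\min}^{d-1}$ rather than $P\leq 1$. This is exactly where the hypothesis $\m{I}\leq d-1$ enters, while $\m{I}<\m{P_I}$ is used through Lemma \ref{lem:produitwk}.
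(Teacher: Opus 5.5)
Your proof is correct and follows the same route as the paper: Lemma \ref{lem:produitwk}, then an elementary comparison of the exponents of the $\delta_j$. Your reduction to the extremal case $|P_I|=|I|+1$, together with the explicit split on the sign of $e$, is in fact slightly more careful than the paper: its step $\prod_{j\in I}\delta_j^{c}\geq(\min_{j\in I}\delta_j)^{|I|c}$, with $c=d-1-(d-2)\frac{|P_I|}{|I|}$, tacitly requires $c\geq 0$, whereas for $c<0$ one instead simply has $\prod_{j\in I}\delta_j^{c}\geq 1>\min_{j\in I}\delta_j$.
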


\begin{proof}
Let $\delta \in \oii. $ By Lemma \ref{lem:produitwk}, we know that $ \dsp_{k=1}^d\wik  \leq \dsp_{j \in I} \delta_j^{\frac{\m{P_I}}{\m{I}}}.$ 
Thus 
 \begin{align*}
\dfrac{\dsp_{j\in I}\delta_j^{d-1}}{\dsp_{k=1}^d\wik^{d-2}} \geq \dfrac{\dsp_{j\in I}\delta_j^{d-1}}{\dsp_{j\in I}\delta_j^{\frac{(d-2)\m{P_I}}{\m{I}} }} = \dsp_{j\in I} \delta_j^{d\left(1-\frac{\m{P_I}}{\m{I}}\right) -1 + 2\frac{\m{P_I}}{\m{I}}} \geq \min_{j\in I} \delta_j^{d(\m{I}-\m{P_I}) - \m{I}+2\m{P_I}}. 
 \end{align*}
Moreover, 
\begin{align*}
 \min_{j\in I} \delta_j^{d(\m{I}-\m{P_I}) - \m{I}+2\m{P_I}} \geq \min_{j\in I}\delta_j & \Leftrightarrow 1\geq d(\m{I} - \m{P_I})-\m{I} -2 \m{P_I} \\ 
 &\Leftrightarrow d(\m{P_I} - \m{I}) \geq \m{P_I} - \m{I} +\m{P_I}-1 \\
 &\Leftrightarrow d \geq 1+ \dfrac{\m{P_I}-1}{\m{P_I}-\m{I}}.
\end{align*}
The last inequality being always true because $1 \leq \m{I} < \m{P_I} \leq d.$ 
 \end{proof}

\subsection{The results}

And now our boundedness criterion: 
\begin{theorem}\label{thm_cont_geom}
Let $d \geq 2  $  and $\beta \geq d-3$. Let $\phi \in \mathcal{O}( \DD^d,\DD^d) \cap \mathcal{C}^1(\ovD). $ Then, $C_\phi : A^2_{\beta}(\DD^d) \rightarrow A^2_{\beta}(\DD^d)$ is bounded if and only if 
\begin{enumerate}[(a)]
    \item For all $ j = 1,\dots,d-1,\ \phi(\ovD \setminus T_{j+1,d} ) \cap T_{j+1,d} = \emptyset$
    \item For all  $I \subset \I{d}, \ \m{I } = j$ and all $ \xi \in T_{j,d} \setminus T_{j+1,d}$ such that $ \phi_I(\xi) \in \TT^j $ then $ \left( \dfrac{\partial\phi_i(\xi)}{\partial z_k }\right)_{i \in I, \ k \in P_I} $is invertible. 
\end{enumerate}

\end{theorem}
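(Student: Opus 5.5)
We will prove both directions through the boundedness analogue of Theorem \ref{result_gen_comp}. That analogue comes from Lemma \ref{lem:doubleinclusion}, Lemma \ref{equiv} and the same covering argument, with ``$\leq\veps$'' replaced by ``$\leq C$''. It says that $C_\phi$ is bounded on $\Abd$ if and only if, for every $I$ with $\phi_I$ touching the polycircle (locally near each boundary point, with $I$ maximal), there is $C>0$ such that
\[
\lambda_d(\lbrace \theta \in O : \m{\phi_j(\theta)-\eta_j}<\delta_j,\ j\in I\rbrace)\prod_{k=1}^d\wik^{1+\beta}\leq C\prod_{j\in I}\delta_j^{2+\beta}.
\]
We will reduce this inequality to a statement about the geometry of $\phi$ near the points where $\phi_I\in\TTI$.

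\textbf{Necessity.} Assume $C_\phi$ is bounded. For (a), suppose some $\xi\in\ovD\setminus T_{j+1,d}$ has $\phi(\xi)\in T_{j+1,d}$. Then there is $J$ with $\m{J}=j+1$ and $\phi_J(\xi)\in\TT^{j+1}$. By Lemma \ref{max_principle}, $\xi\in T_{\m{P_J},d}$, so $\m{P_J}\leq j<\m{J}$. By Lemma \ref{lem:wikcontinu} with $\beta_1=\beta_2=\beta$, taking all $\delta_i=\delta$, we get $\delta^{\m{P_J}(2+\beta)}\lesssim\delta^{\m{J}(2+\beta)}$. Strictly speaking this product is $\prod_k\wik^{2+\beta}$ with $\wik=\delta$ for $k\in P_J$, which forces $\m{P_J}\geq\m{J}$, a contradiction. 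For (b), take $\xi\in T_{j,d}\setminus T_{j+1,d}$ with $\phi_I(\xi)\in\TT^j$, $\m{I}=j$. By (a) and Lemma \ref{max_principle}, $\m{P_I}\leq j$, and the argument above gives $\m{P_I}\geq j$, so $\m{P_I}=j$ and the matrix is square. If it were singular, the map $\theta\mapsto\phi_I(\theta)$ restricted to the $P_I$-variables would have rank $<j$ near $\theta_0$. A first order Taylor expansion would then show that the $\delta$-preimage of a box has measure $\gtrsim \delta^{j-1}\cdot\delta^{1/2}$ or larger, in fact at least $\delta^{j-1}$ times a quantity $\gg\delta$. This contradicts the Carleson inequality above with $\prod\wik=\delta^j$, which requires measure $\lesssim\delta^{j}$. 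Here we must be careful: $\mathcal C^1$ regularity only gives $o(\delta)$ remainders, so we would use the degenerate direction to show the measure is $\geq \delta^{j-1}\cdot\rho(\delta)$ with $\rho(\delta)/\delta\to\infty$.

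\textbf{Sufficiency.} Assume (a) and (b). Fix $\theta_0$ and a maximal $I$ with $\phi_I(\theta_0)\in\TTI$, and let $\xi=e^{i\theta_0}$. There are two cases.

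If $\m{P_I}=\m{I}=j$, then by (a) and Lemma \ref{max_principle} the point $\xi$ lies in the stratum where (b) applies, so the Jacobian in the $P_I$ variables is invertible. By the inverse function theorem, the preimage of $S_I(\eta,\ovd)$ in a neighbourhood $O$ has measure $\lesssim\prod_{j\in I}\delta_j$, while $\prod_k\wik\leq\prod_{j\in I}\delta_j$. Together these give the bound with exponent $1+(1+\beta)=2+\beta$ exactly.

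If $\m{P_I}>\m{I}$, we use only the trivial measure bound, which will be supplied by Lemma \ref{lemme_cont}. To get it, we first apply the implicit function theorem: $\phi_I$ is a submersion onto $\TTI$ in some $\m{I}$ of the $P_I$ variables, which gives $\lambda_d(\ldots)\lesssim\prod_{j\in I}\delta_j$. It then suffices that $\prod_k\wik^{1+\beta}\lesssim\prod_{j\in I}\delta_j^{1+\beta}$. This follows from Lemma \ref{lem:wikcontinu}-type estimates, and more precisely from Lemma \ref{lemme_cont}, which gives the key inequality precisely at the threshold $\beta=d-3$ (exponents $d-1=2+\beta$ and $d-2=1+\beta$). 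For $\beta>d-3$ we interpolate using $\prod\wik\leq\prod\delta_j$.

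\textbf{Main obstacle.} The main obstacle is the second case: guaranteeing a submersion direction when the Jacobian condition is only imposed at points where $\m{P_I}=\m{I}$. If that fails, I would fall back on the crude bound $\lambda_d\leq C$ combined with a stronger $\wik$ decay from Lemma \ref{lemme_recurrence}, using $\m{P_J}>\m{J}$ for all $J\subset I$.
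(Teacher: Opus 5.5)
Your necessity half is essentially sound. For (a), applying Lemma \ref{lem:wikcontinu} with all $\delta_i=\delta$ is a cleaner route than the paper's explicit lower bound $\lambda_d\gtrsim\delta^{\m{P_I}}$. Your $o(\m{\theta})$ treatment of (b) is, if anything, more careful than the paper's.

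The gap is in the sufficiency case $\m{P_I}>\m{I}$. You claim that $\phi_I$ is a submersion in $\m{I}$ of the $P_I$-variables, giving $\lambda_d(\{\theta\in O:\ \m{\phi_j(\theta)-\eta_j}<\delta_j,\ j\in I\})\lesssim\prod_{j\in I}\delta_j$. This does not follow from (a) and (b), since (b) only constrains points where $\m{P_I}=\m{I}$. It is also false in general. Take $d\geq 3$ and $\phi=(z_1\cdots z_d,\dots,z_1\cdots z_d,0)$, which satisfies (a) and (b), with $I=\I{d-1}$. All components of $\phi_I$ coincide, so the Jacobian has rank one, and for $\eta=(1,\dots,1)$ the set above has measure $\simeq\min_{j\in I}\delta_j$, far larger than $\prod_{j\in I}\delta_j$. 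A consistency check exposes the problem. With $\lambda_d\lesssim\prod_{j\in I}\delta_j$ and $\prod_k\wik\leq\prod_{j\in I}\delta_j$ (Lemma \ref{lem:produitwk}), your Case 2 never uses $\beta\geq d-3$. It would therefore give boundedness on every $\Abd$, whereas for this $\phi$ the operator is bounded exactly when $\beta\geq d-3$.

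The missing idea is to settle for the one-function bound $\lambda_d(\{\theta\in O:\ \m{\phi_j(\theta)-\eta_j}<\delta_j,\ j\in I\})\lesssim\min_{j\in I}\delta_j$. This holds because each $\phi_j$ has a nonvanishing derivative in some $\theta_k$; the paper takes it from Lemma 2.17 of its reference BD26. You then feed this into Lemma \ref{lemme_cont}, which says exactly
\[
\min_{j\in I}\delta_j\leq\frac{\prod_{j\in I}\delta_j^{d-1}}{\prod_{k=1}^d\wik^{d-2}},
\]
that is, the Carleson condition at $\beta=d-3$. This is the only place the threshold enters. The passage to $\beta>d-3$ then follows either from the stability theorem or from your monotonicity remark, which is valid because $\prod_k\wik\leq\prod_{j\in I}\delta_j$ under (a).

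You mention Lemma \ref{lemme_cont} but pair it with the wrong measure bound. The inequality you then ask for, $\prod_k\wik^{1+\beta}\lesssim\prod_{j\in I}\delta_j^{1+\beta}$, is not what that lemma provides. Your fallback does not work either:
\begin{itemize}
\item Lemma \ref{lemme_recurrence} needs $\phi(\ovD\setminus T_{j+1,d})\cap T_{j,d}=\emptyset$, which is strictly stronger than (a).
\item Even when that hypothesis holds, the crude bound $\lambda_d\leq C$ loses too much. In the example above with all $\delta_j=\delta$, one has $\prod_k\wik=\delta^d$, and the requirement $\delta^{d(1+\beta)}\lesssim\delta^{(d-1)(2+\beta)}$ forces $\beta\geq d-2$.
\end{itemize}
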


\begin{proof}
We  will prove the result on $\Abd$ for $\beta = d-3$ and applying Theorem $2.13$ of \cite{BD26}, we can deduce that the operator is bounded  on any $\Abd$ for $\beta >d-3$.\\
Let us first observe that in condition $(b)$, we have $\m{P_I}  =\m{I}$. Indeed, let $I\subset \I{d}$, $\m{I} = j$ and $\xi \in T_{j,d}\setminus T_{j+1,d}$ be such that $\phi_I(\xi) \in \TT^j$. If $\m{P_I}> \m{I}$, for example $\m{P_I} = j+1$, then as $\phi_I(\xi) \in \TT^j,$ by Lemma \ref{max_principle}, we have $\xi \in T_{j+1,d}$ which is a contradiction because $\xi \in T_{j,d}\setminus T_{j+1,d}$. Now, if $\m{P_I} < \m{I}$, for example $\m{P_I} = j-1$, then by Lemma \ref{max_principle}, we have $\xi  \in T_{j-1,d} \subset \ovD\setminus T_{j,d}.$ Thus, we have $\phi(\ovD \setminus T_{j,d}) \cap T_{j,d} \neq \emptyset$ and condition $(a)$ is not satisfied. In conclusion, $\m{P_I}  =\m{I}$.\\

First let us show that the conditions are sufficient. We intend to apply Theorem $2.10$ of \cite{BD26}.  Let $\xi \in \ovD $ and let $I \subset \I{d}$ of maximal cardinality such that $\phi_I(\xi) \in \TTI$. Firstly, if $\m{I} = d. $ Then, by assumption a) for $j= d-1$, we have $\xi = e^{i\theta_0} \in \TT^d$ and by b) the derivative $d\phi(\xi)$ is invertible thus applying Lemma $5.3$ of \cite{BD26} we get that there exist $C>0$ and $O $ a neighbourhood of $\theta_0$ in $\RR^d$ such that for all $\eta \in \TT^d$ and all $\ovd \in (0,1)^d, $ 
$$ \lambda_d(\lbrace \theta \in O : \m{ \phi_j(\theta) - \eta_j }< \delta_j, \ j = 1,\dots,d \rbrace ) \leq C \dfrac{\dsp_{j=1}^d\delta_j^{d-1}}{\dsp_{k=1}^d \wik^{d-2}}.  $$
Secondly, if $\m{I } = p \in \I{d-1}. $ Then there exists $I \subset \I{d}  $ such  that $\m{I} = p$ and $\phi_I(\xi) \in  \TT^p$. But then by a) $\xi \in T_{p,d}$ and $\m{P_I} \geq p. $ \\
If $\m{P_I} = p$ meaning $\xi  = re^{i\tu} \in T_{p,d}\setminus T_{p+1,d}. $ By    b), $\left( \dfrac{\partial\phi_i(\xi)}{\partial z_k }\right)_{i \in I, \ k \in P_I}$ is invertible thus applying Lemma $5.3$ of \cite{BD26} we get that there exist $C>0$ and $O$ a neighbourhood of $\tu $ in $\RR^d$ such that for all $\eta \in \TT^p $ and all $ \ovd \in \oii , $
$$\lambda_d(\{ \theta \in O : \m{\phi_j(\theta) - \eta_j } <\delta_j,\ j \in  I \} )\leq C \dfrac{\dsp_{j \in I } \delta_j^{d-1}}{\dsp_{k = 1 }^d\wik^{d-2}}.$$
If $\m{P_I} >p $ then applying Lemma \ref{lemme_cont}, we get $\dfrac{\dsp_{j \in I } \delta_j^{d-1}}{\dsp_{k = 1 }^d\wik^{d-2}} \geq \min\limits_{j \in I } \delta_j . $ Let $\xi = r_2e^{i\theta_2} \in T_{p+1,d}$. Applying Lemma 2.17 of \cite{BD26}, we get that there exists $O_1$ a neighbourhood of $\theta_2 \in \RR^d$ such that 
$$\lambda_d(\{ \theta \in O_1 : \m{\phi_j(\theta) - \eta_j } <\delta_j,\ j \in  I \} ) \lesssim \min\limits_{j \in I }\delta_j . $$ We deduce that there exists $C>0$ such that for all $\eta \in \TTI$ and all $\ovd \in \oii, $
 $$\lambda_d(\{ \theta \in O_1 : \m{\phi_j(\theta) - \eta_j } <\delta_j,\ j \in  I \})  \leq C\dfrac{\dsp_{j \in I } \delta_j^{d-1}}{\dsp_{k = 1 }^d\wik^{d-2}} .  $$ 
Finally, we showed that for all $\theta_0 \in \RR^d$ and all $I \subset\I{d} $ such that $\phi_I(\theta_0) \in \TTI$, there exist $C>0$ and $O$ a neighbourhood of $\theta_0 $ in $\RR^d $ such that for all $\eta \in \TTI$ and all $\ovd \in \oii$, 
 $$\lambda_d(\{ \theta \in O  : \m{\phi_j(\theta) - \eta_j } <\delta_j,\ j \in  I \})  \leq C\dfrac{\dsp_{j \in I } \delta_j^{d-1}}{\dsp_{k = 1 }^d\wik^{d-2}}  . $$
This implies that  $C_\phi : A^2_{d-3}(\DD^d) \rightarrow A^2_{d-3}(\DD^d)$ is bounded. \\

Now let us show that the conditions are necessary. We are going to argue by contrapositive. Let us assume either
\begin{enumerate}
    \item $ \exists j \in \I{d-1}, \ \phi(\ovD\setminus T_{j+1,d} ) \cap T_{j+1,d} \neq \emptyset  $ or
    \item $\exists j \in \I{d}, \ I \subset \I{d}, \ \m{I } = j, \ \exists \xi \in T_{j,d} \setminus T_{j+1,d} $ such that $\phi_I(\xi) \in \TTI$ and $\left( \dfrac{\partial\phi_i(\xi)}{\partial z_k }\right)_{i \in I, \ k \in P_I} $is not invertible. 
\end{enumerate}
First assume $(1). $ Let $\xi \in \ovD \setminus T_{j+1,d}$ and $I \subset \I{d}$ with $\m{I} = j+1$ be such that $\phi_I(\xi ) \in \TT^{j+1}.$ But then $\m{P_I} \leq j$ because if $\m{P_I} \geq j+1 $, by  Lemma \ref{max_principle}, $\xi \in T_{j+1,d}$ which is excluded. Let $\delta \in (0,1) . $ Then $[-\delta,\delta]^{\m{P_I}}\times [-\pi,\pi]^{d-\m{P_I}} \subset \{ \theta \in [-\pi, \pi)^d : \m{\phi_j(\theta) - 1 } <\delta, \  j\in I\}$ and 
$$ \lambda_d([-\delta,\delta]^{\m{P_I}}\times [-\pi,\pi]^{d-\m{P_I}} ) \gtrsim \delta^{\m{P_I}} = \delta^{\m{P_I}/\m{I}* \m{I}}. $$ Applying Corollary $2.13$ of \cite{BD26}, we get that $C_\phi$ is not bounded on any $\Abd.$\\
Now assume $(2)$.  Let $ j \in\I{d}, $ $I \subset \I{d}$ with $\m{I} = j$  and $\xi \in T_{j,d}\setminus T_{j+1,d}$ be such that $\phi_I(\xi) \in \TT^j$ and $ \left( \dfrac{\partial\phi_i(\xi)}{\partial z_k }\right)_{i \in I, \ k \in P_I} $is not invertible. Then $\m{P_I} = j$ because if $\m{P_I} \geq j+1$, by Lemma \ref{max_principle}, we have $\xi \in T_{j+1,d}$ which is excluded. Without loss of generality, we may assume that $I = \I{j}$ and $\xi = (1,\dots,1) = \phi_I(\xi).$  For $k \in I,$ we can write $\phi_k(\theta) = 1 +  L_k(\theta) + O(\tu^2+\dots + \theta_j^2).$  As the matrix $\left( \dfrac{\partial\phi_i(0)}{\partial z_k }\right)_{i \in I, \ k \in P_I} $is not invertible, we can assume that its rank is equal to $r \in \I{j-1}$ and that its $r-$first rows are linearly independent. Then, $(L_1,\dots,L_r)$ are linearly dependent and we can write, for all $k = r+1,\dots,j, \ L_k(\theta) = \dss_{p=1}^ra_pL_p(\theta)$. Thus, we have 
$$ \phi_k(\theta)= \left\{ \begin{array}{rlc} 
 1+ L_k(\theta) + O(\tu^2+ \dots + \theta_j^2)  & \text{if } k = 1,\dots, r \\ 
1 + \dss_{p=1}^ra_pL_p(\theta) + O(\tu^2+\dots+\theta_j^2)  & \text{if } k =r+1,\dots,j.

\end{array} 
\right.
$$
Doing linear changes of variables, we may assume that for all $k= 1 ,\dots,r$, we have $L_k(\theta) = \theta_k.$ Hence,
$$ \phi_k(\theta)= \left\{ \begin{array}{rlc} 
 1+ \theta_k + O(\tu^2+ \dots + \theta_j^2)  & \text{if } k = 1,\dots, r \\ 
1 + \dss_{p=1}^rb_p\theta_p + O(\tu^2+\dots+\theta_j^2)  & \text{if } k =r+1,\dots,j.

\end{array} 
\right.
$$

\noindent Let $\delta \in (0,1). $ Then
$$\lbrace \theta: \m{\theta_k} <\delta, \ k = 1,\dots, r  \text{ and } \m{\tk } \leq \delta^{1/2},\ k =r+1,\dots,j \} \subset  \{ \theta \in [-\pi,\pi)^d : \m{\phi_k(\theta) -1} \lesssim \delta,\ k\in I \}.$$
Moreover, 
$$\lambda_d(\lbrace \theta : \m{\theta_k} <\delta, \ k = 1,\dots, r  \text{ and } \m{\tk } \leq \delta^{1/2}, \ k =r+1,\dots,j\} ) \gtrsim \delta^{(r+j-1)/2} = \delta^{\frac{r+j-1}{2j}\cdot j}.$$
Applying Corollary $2.13$ of \cite{BD26}, we get that $C_\phi$ is  not bounded on  any $\Abd. $ 
\end{proof}

The proof shows that conditions a) and b) are necessary for $C_\phi$ to be bounded on any $\Abd$ and that they are sufficient for $\beta \geq d-3$. Now, we are going to show that $\beta = d-3$ is optimal. 
Let $\varphi\in \mathcal{O}(\DD^d,\DD) \cap \mathcal{C}^1(\ovD)$ be define by $\varphi(z_1,\dots,z_d) = z_1\cdots z_d$. Consider $$\phi : \begin{array}{ccc}
\DD^d & \rightarrow & \DD^d\\
(z_1,\dots,z_d) & \mapsto & (\varphi(z_1,\dots,z_d),\dots,\varphi(z_1,\dots,z_d),0).
\end{array}$$
Let $z \in \DD^d$. We have $\m{\varphi(z)} = 1 \Leftrightarrow \m{z_i} = 1, \forall i = 1,\dots,d$ thus $\phi_j(\xi) \in \TT$ only if $\xi\in \TT^d$. Hence, there are no $\xi$ as in condition b) for $j=1,\dots,d-1$. As $\phi_d = 0$, we have $\phi(\TT^d)\cap \TT^d = \emptyset$ and  conditon b) is also satisfied for $j= d$. From $\m{\varphi(z)} = 1 \Leftrightarrow \m{z_i} = 1, \forall i = 1,\dots,d$, we also deduce that condition a) is satisfied. Thus the function $\phi$ satifies the condition of Theorem \ref{thm_cont_geom}. Moreover by Corollary $6$ of \cite{Ko22}, we know that $C_\phi$ is bounded on $\Abd$ is and only if $\beta \geq d-3$. Therefore, $\beta = d-3$ is optimal. \\

From this theorem we can deduce a sufficient condition of boundedness which is the one we need to prove our result:
\begin{corollary}\label{condition_suff_cont}
Let $d\geq 2.$ Let $\phi\in \mathcal{O}(\DD^d,\DD^d)\cap \mathcal{C}^1(\ovD)$. Assume that
\begin{enumerate}[(a)]
    \item For all $\xi \in \TT^d$ such that $\phi(\xi) \in \TT^d, $ the derivative $d\phi(\xi)$ is invertible,\\ 
    \item For all $j \in \I{d-1},\ \phi(\ovD \setminus T_{j+1,d}) \cap T_{j,d} = \emptyset.$ 
\end{enumerate}
Then $C_\phi : A^2_{d-3}(\DD^d) \rightarrow A^2_{d-3}(\DD^d)$  is bounded.
\end{corollary}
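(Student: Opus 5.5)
The plan is to deduce Corollary \ref{condition_suff_cont} from Theorem \ref{thm_cont_geom} with $\beta = d-3$. This means checking that assumptions (a) and (b) of the corollary imply conditions (a) and (b) of Theorem \ref{thm_cont_geom}.

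\emph{Condition (a) of the theorem.} We need $\phi(\ovD\setminus T_{j+1,d})\cap T_{j+1,d}=\emptyset$ for $j=1,\dots,d-1$. Fix such a $j$ and suppose $\xi\in\ovD\setminus T_{j+1,d}$. Since $T_{j+1,d}\subset T_{j,d}$, we have $\ovD\setminus T_{j+1,d}\supset \ovD\setminus T_{j,d}$, so the hypothesis does not apply directly. Split into two cases:
\begin{itemize}
\item If $\xi\notin T_{j,d}$, then $\xi \in \ovD\setminus T_{j,d}$. For $j\geq 2$, hypothesis (b) with index $j-1$ gives $\phi(\xi)\notin T_{j-1,d}\supset T_{j+1,d}$. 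For $j=1$, $\xi\notin T_{1,d}$ means $\xi\in\DD^d$, so $\phi(\xi)\in\DD^d$ and $\phi(\xi)\notin T_{2,d}$.
\item If $\xi\in T_{j,d}\setminus T_{j+1,d}$, then hypothesis (b) with index $j$ gives $\phi(\xi)\notin T_{j,d}\supset T_{j+1,d}$.
\end{itemize}
So condition (a) of Theorem \ref{thm_cont_geom} holds. In fact, applying the same dichotomy to the hypothesis for all $j$ shows more: $\phi$ maps $\ovD\setminus T_{j+1,d}$ into $\ovD\setminus T_{j,d}$.

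\emph{Condition (b) of the theorem.} Take $I$ with $|I|=j$ and $\xi\in T_{j,d}\setminus T_{j+1,d}$ with $\phi_I(\xi)\in\TT^j$.
\begin{itemize}
\item If $j\le d-1$, then $\phi(\xi)\in T_{j,d}$ while $\xi\in \ovD\setminus T_{j+1,d}$, which contradicts hypothesis (b). So no such $\xi$ exists, and condition (b) of the theorem is vacuous in this range.
\item If $j=d$, then $I=\I{d}$, $\xi\in\TT^d$ and $\phi(\xi)\in\TT^d$. Hypothesis (a) says $d\phi(\xi)$ is invertible.
\end{itemize}
In the case $j=d$ we still need to identify $d\phi(\xi)$ with the matrix $\bigl(\partial\phi_i(\xi)/\partial z_k\bigr)_{i\in I,\,k\in P_I}$. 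If some $k\notin P_I$, then the $k$-th column of $d\phi(\xi)$ vanishes. This is because $P_I$ collects the indices $k$ for which some $\partial \phi_j/\partial z_k(\xi)\neq0$, with $P_j$ computed at this boundary point; this is the same convention the paper uses in the proof of Theorem \ref{thm_cont_geom}. A vanishing column would contradict invertibility, so $P_I=\I{d}$ and the two matrices coincide, which gives condition (b) of the theorem.

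Theorem \ref{thm_cont_geom} with $\beta=d-3$ then yields boundedness on $A^2_{d-3}(\DD^d)$. The main obstacle is the bookkeeping in the first step: the hypothesis is stated with $T_{j,d}$ on the target side, while the theorem needs $T_{j+1,d}$ on both sides. The case split above handles this by using the hypothesis at the shifted index $j-1$ for points outside $T_{j,d}$. The other delicate point is that $P_I$ depends on the base point, so it must be read at the $\xi$ in question.
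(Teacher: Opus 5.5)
Your proof is correct and follows the paper's route: you verify conditions (a) and (b) of Theorem \ref{thm_cont_geom} at $\beta=d-3$, with (b) vacuous for $j\le d-1$ and reducing to invertibility of $d\phi(\xi)$ for $j=d$. Your check that this invertibility forces $P_I=\{1,\dots,d\}$ is a detail the paper leaves implicit. The case split in your first step is unnecessary, though. Hypothesis (b) at the \emph{same} index $j$ already concerns the domain $\ovD\setminus T_{j+1,d}$, so $T_{j+1,d}\subset T_{j,d}$ gives condition (a) of the theorem at once, and the ``main obstacle'' you describe does not arise.
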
 

\begin{proof}
We want to check that these assumptions  implies the assumptions of Theorem \ref{thm_cont_geom}.\\
The condition a) is the same as condition b) of Theorem \ref{thm_cont_geom} for $j=d$. Besides, as for all $j=1,\dots,d-1$,  $T_{j+1,d}\subset T_{j,d}$ condition b) implies condition a) of Theorem \ref{thm_cont_geom}. Finally,  conditon b) implies that for $j=1,\dots,d-1$ there cannot exists $\xi \in T_{j,d}\setminus T_{j+1,d}$ such that $\phi_I(\xi) \in \TTI$ for some $I \subset \I{d}$ with $\m{I} = j$, thus condition b) of Theorem \ref{thm_cont_geom} is satisfied.  Thus, our assumptions implies the ones of Theorem \ref{thm_cont_geom} so applying it, we  deduce that $C_\phi : A^2_{d-3}(\DD^d)  \rightarrow A^2_{d-3}(\DD^d)$ is bounded.

\end{proof}

Before stating our compactness result, let us link the cases of dimension $d=2$ and $d=3$ to Theorem $8$ and $10$ of \cite{Ko22}.\\
For $d=2$ : 
\begin{align*}
 \forall \xi \in \TT^2, \phi(\xi)\in \TT^2, d\phi(\xi) \text{ is invertible} \Leftrightarrow  & \  \text{a)} \ \phi(\ovdd \setminus \TT^2) \cap \TT^2 = \emptyset\\
& \ \text{b)} \ \forall \xi \in \TT^2, \phi(\xi)\in \TT^2, d\phi(\xi) \text{ is invertible} \\
& \ \text{c)} \ \forall \xi \in T_{1,2} \setminus \TT^2, \phi_1(\xi) \in \TT^2, \dfrac{\partial \phi_1(\xi)}{\partial z_k}\neq 0, k \in P_1. 
\end{align*}
\begin{proof}
The right-hand side clearly implies the left-hand one. Now assume that $\forall \xi \in \TT^2$ such that $ \phi(\xi)\in \TT^2$ then $d\phi(\xi) \text{ is invertible} $. Then condition b) is satisfied. Condition c) is also satisfied by Julia-Caratheodory theorem as $k \in P_1$. Finally, assume that a) is not satisfied. There exists $\xi \in T_{1,2}$, for example $\xi \in \TT\times \DD$, such that $\phi(\xi) \in \TT^2$. Then, $z_2 \mapsto \phi_1(\xi_1,z_2)$ is holomorphic on $\DD$ and $\phi_2(\xi_2) \in \TT$, by the maximum principle, $\phi_1(\xi_1, \cdot)$ is constant. The same goes for $\phi_2(\xi_1, \cdot)$. Thus $\phi(\xi_1, z_2) = \phi(\xi_1,\xi_2) \in \TT^2$ for all $z_2 \in \TT$. By assumption, $d\phi(\xi_1,z_2)$ is invertible, however, $\dfrac{\partial \phi_1}{\partial z_2 }   = \dfrac{\partial \phi_2 }{\partial z_2 }  = 0 $ which is a contradiction so a) is satisfied.
\end{proof}

For $d=3$ : 
\begin{enumerate}[(1)]
  \item For any $\xi\in\TT^3$ such that $\phi(\xi)\in\TT^3$, $d\phi(\xi)$ is invertible 
  \item for any $1\leq i_1<i_2\leq 3$ and any $\xi\in\TT^3$ such that $(\phi_{i_1}(\xi),\phi_{i_2}(\xi))\in\TT^2$, either
  \begin{enumerate}[(a)]
    \item $\nabla\phi_{i_1}(\xi),\nabla\phi_{i_2}(\xi)$ are linearly independent, or
    \item $\frac{\partial \phi_{i_k}}{\partial z_j}(\xi)\neq 0$ for $j=1,2,3$ and $k=1,2.$
  \end{enumerate}
\end{enumerate}
is equivalent to 
\begin{enumerate}[(a)]
  \item $\phi(\overline{\mathbb{D}}^3 \setminus \TT^3) \cap \TT^3 = \emptyset$
  \item $\phi(\overline{\mathbb{D}}^3 \setminus T_{2,3}) \cap T_{2,3} = \emptyset$
  \item For any $\xi\in\TT^3$ such that $\phi(\xi)\in\TT^3$, $d\phi(\xi)$ is invertible 
  \item For any $\xi\in T_{2,3} \setminus \TT^3$ and any $I \subset \{1,2,3\}$ with $|I|=2$ such that $\phi_I(\xi)\in\TT^2$, $\left(\frac{\partial\phi_i(\xi)}{\partial z_k}\right)_{i \in I,\, k \in P_I}$ is invertible 
  \item For any $\xi\in T_{1,3} \setminus T_{2,3}$ and any $I \subset \{1,2,3\}$ with $|I|=1$ such that $\phi_I(\xi)\in\TT$, $\left(\frac{\partial\phi_i(\xi)}{\partial z_k}\right)_{i \in I,\, k \in P_I}$ is invertible.
\end{enumerate}

\begin{proof}
Assume $(a),(b),(c), (d), (e)$. Then $(1)$ is satisfied. Now, let $\xi \in \ovdt$  and $I \subset \{1,2,3\} $ with $\m{I }  = 2$ be such that $\phi_I(\xi )\in \TT^2$. There are three cases to consider. First, $\m{P_I} = 1 : $ impossible otherwise $\phi(\ovdt \setminus T_{2,3})\cap T_{2,3}  \neq \emptyset.$ Next, $\m{P_I}  = 2$. By $(d)$, $\left(\frac{\partial\phi_i(\xi)}{\partial z_k}\right)_{i \in I,\, k \in P_I}$ is invertible  thus $\nabla\phi_{i_1}(\xi),\nabla\phi_{i_2}(\xi)$ are linearly independent for $i_1,i_2 \in I$. Finally, if $\m{P_I} = 3$. Either $\nabla\phi_{i_1}(\xi),\nabla\phi_{i_2}(\xi)$ are linearly independent or not. If not, let us assume, for example, $\dfrac{\partial\phi_{i_1}(\xi)}{\partial z_1} = 0$. As the gradients are not linearly independent, there exists $C>0$ such that $ \dfrac{\partial\phi_{i_2}(\xi)}{\partial z_1} = 0 $ thus neither $\phi_{i_1}$ nor $\phi_{i_2}$ depend on the variable $z_2$ and $\m{P_I} \leq 2 $ which is a contradiction. Thus $\dfrac{\partial\phi_{i_j}(\xi)}{\partial z_k} \neq 0 $ for $j=1,2$ and $k=1,2,3$. Thus $(1) $ and $(2)$ are satified. \\
Now assume $(1)$ and $(2)$. Then, $(c)$ is satisfied. $(e)$ is a consequence of Julia-Caratheodory theorem's and the fact that $k \in P_I$. Then, if $(a) $ is not satisfied, there exists $\xi \in T_{2,3} $ such that $\phi(\xi) \in \TT^3$. For example, $\xi \in \TT^2 \times \DD$. The application  $z_3 \mapsto \phi_1(\xi_1,\xi_2,z_3)$ is holomorphic and its modulus reaches  its maximum in $\xi_3 \in \DD$ so by the maximum principle it is constant. The same goes for $\phi_2(\xi_1,\xi_2,\cdot)$ and $\phi_3(\xi_1,\xi_2,\cdot)$. By $(1)$, $d\phi(\xi_1,\xi_2,z_3)$ is invertible, however $\dfrac{\partial \phi_1}{\partial z_3 } = \dfrac{\partial \phi_2}{\partial z_3 } =\dfrac{\partial \phi_3}{\partial z_3 } = 0$ and that is a contradiction. Thus $(a)$ is satisfied. We show that $(b)$ is satisfied in a similar way. Now, let $\xi \in  T_{2,3} \setminus \TT^3$ and $I = \{i_1,i_2\}\subset \{1,2,3\}$ with $\m{I}  =2$ be such that $\phi_I(\xi) \in \TT^2$. If $\nabla \phi_{i_1}(\xi)$ and $\nabla\phi_{i_2}(\xi)$ are linearly independent then  $\left(\frac{\partial\phi_i(\xi)}{\partial z_k}\right)_{i \in I,\, k \in P_I}$ is invertible and if $\frac{\partial\phi_i(\xi)}{\partial z_k} \neq 0$ for $i\in I$ and $k = 1,2 ,3$ then $\m{P_I}  = 3 $ and by lemma \ref{max_principle}, $\xi \in \TT^3$ which is excluded. Finally, $(a), (b), (c), (d) $ and $(e)$ are satisfied.  
\end{proof}

 Let us finally state our compactness criterion: 
\begin{theorem}\label{thm_comp_geom}
Let $d \geq 2$. Let $\phi \in \mathcal{O}(\DD^d ,\DD^d) \cap \mathcal{C}^1(\ovD). $ Let $\beta >d-3.$ Then 
$C_\phi : \Abd \rightarrow \Abd$ \text{ is compact  if and only if } $\forall j \in \I{d},\ \phi(\ovD\setminus T_{j+1,d} ) \cap T_{j,d} = \emptyset. $ 

\end{theorem}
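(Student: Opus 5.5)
We argue the two implications separately, using Theorem \ref{result_gen_comp} in both directions (with $\beta_1=\beta_2=\beta$) together with the boundedness criterion already established.

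\emph{Sufficiency.} Assume $\phi(\ovD\setminus T_{j+1,d})\cap T_{j,d}=\emptyset$ for all $j\in\I{d}$. For $j=d$ this reads $\phi(\ovD\setminus\TT^d)\cap\TT^d=\emptyset$ (recall $T_{d+1,d}=\emptyset$, so in fact $\phi(\ovD)\cap \TT^d=\emptyset$). Hence no $I$ with $\m{I}=d$ has $\phi_I$ touching the polycircle.

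Now take $I$ with $\phi_I$ touching the polycircle, so $\m{I}\le d-1$. The argument in Lemma \ref{lemme_recurrence} (inequality \eqref{hypothese}) then gives $\m{P_I}>\m{I}$. Corollary \ref{lemme_comp} yields $\prod_k\wik\le \min_{j\in I}\delta_j\prod_{j\in I}\delta_j$. We verify condition (c) of Theorem \ref{result_gen_comp} with $\beta_1=\beta_2=\beta$. The quantity to control is
$$\lambda_d(\{\theta\in O: |\phi_j(\theta)-\eta_j|<\delta_j,\ j\in I\})\,\frac{\prod_k\wik^{1+\beta}}{\prod_{j\in I}\delta_j^{2+\beta}}.$$
We first need a local bound on the Lebesgue measure, and there are two routes.

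\emph{First route.} Corollary \ref{condition_suff_cont} applies: its hypothesis (a) is vacuous since $\phi(\TT^d)\cap\TT^d=\emptyset$, and (b) is our assumption. So $C_\phi$ is bounded on $A^2_{d-3}$. Equivalently, as in the proof of Theorem \ref{thm_cont_geom}, for each $\theta_0$ and maximal $I$ there are $C$ and $O$ with
$$\lambda_d(\cdots)\le C\,\frac{\prod_{j\in I}\delta_j^{d-1}}{\prod_k\wik^{d-2}}.$$
Plugging this in, the quantity above is at most
$$C\left(\frac{\prod_k\wik}{\prod_{j\in I}\delta_j}\right)^{\beta-(d-3)}\le C\,\bigl(\min_{j\in I}\delta_j\bigr)^{\beta-d+3},$$
using Corollary \ref{lemme_comp}. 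Since $\beta>d-3$, this tends to $0$ as $\min\delta_j\to0$, which is exactly (c).

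The main obstacle is justifying the local Lebesgue bound $\lambda_d\lesssim \prod\delta_j^{d-1}/\prod\wik^{d-2}$ for \emph{every} maximal $I$. This is what the sufficiency part of Theorem \ref{thm_cont_geom} establishes via Lemma $5.3$ and Lemma $2.17$ of \cite{BD26} and Lemma \ref{lemme_cont}, and I would reuse that case analysis verbatim.

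\emph{Second route.} If the case analysis is troublesome, use the cruder bound $\lambda_d\lesssim\min_{j\in I}\delta_j$ (Lemma $2.17$ of \cite{BD26}, valid since $\m{P_I}>\m{I}$). This reduces the problem to showing $\min\delta_j\,(\prod\wik)^{1+\beta}/\prod\delta_j^{2+\beta}\to0$. That follows from Lemma \ref{lemme_cont}'s computation combined with Corollary \ref{lemme_comp}.

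\emph{Necessity.} Suppose some $j$ and some $\xi\in\ovD\setminus T_{j+1,d}$ satisfy $\phi(\xi)\in T_{j,d}$. Then there is $I$ with $\m{I}=j$ and $\phi_I(\xi)\in\TT^j$. Since $\xi\notin T_{j+1,d}$, Lemma \ref{max_principle} forces $\m{P_I}\le j=\m{I}$.

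We apply Corollary \ref{coro_noncom}. Near $\xi$, the map $\phi_I$ depends to first order only on the coordinates in $P_I$. As in part (1) of the necessity proof of Theorem \ref{thm_cont_geom}, the set $\{\theta: |\phi_k(\theta)-\eta_k|<\delta,\ k\in I\}$ contains a box of measure $\gtrsim\delta^{\m{P_I}}$. Choosing the $\theta_k$, $k\notin P_I$, freely is legitimate after moving to the distinguished boundary by the maximum principle, since $\phi_I$ is independent of the coordinates outside $P_I$ along the relevant slice. Corollary \ref{coro_noncom} then requires
$$\m{P_I}\le \m{I}(2+\beta)-\m{P_I}(1+\beta),\quad\text{i.e.}\quad \m{P_I}(2+\beta)\le\m{I}(2+\beta),$$
which holds. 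Hence $C_\phi$ is not compact. For $j=d$ with $\xi\in\TT^d$ and $\phi(\xi)\in\TT^d$, the same reasoning applies with $\m{P_I}\le d$.
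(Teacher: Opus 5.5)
Your proposal is correct and follows the paper's proof. Sufficiency goes through Corollary \ref{condition_suff_cont}, the local bound $\lambda_d\lesssim\prod_{j\in I}\delta_j^{d-1}/\prod_k\wik^{d-2}$, and Corollary \ref{lemme_comp} to produce the factor $(\min_{j\in I}\delta_j)^{\beta-d+3}$; necessity goes through Lemma \ref{max_principle}, a box of measure $\gtrsim\delta^{\m{P_I}}$, and Corollary \ref{coro_noncom}. The only difference is cosmetic: the paper obtains the local bound by citing Theorem 2.10 of \cite{BD26} once boundedness on $A^2_{d-3}(\DD^d)$ is known, whereas you re-run the case analysis of Theorem \ref{thm_cont_geom}, where only the case $\m{P_I}>\m{I}$ can occur here (handled by Lemma 2.17 of \cite{BD26} plus Lemma \ref{lemme_cont}), so your two routes coincide.
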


\begin{proof}
Let us start by  proving that the conditions are necessary. We will prove it by contrapositive. 
Assume that $\phi(\ovD \setminus T_{j+1,d}) \cap T_{j,d} \neq \emptyset$ for some $j \in \I{d}$.  There exist $\xi \in \ovD \setminus T_{j+1,d}$  and $I \subset \I{d} $ with $\m{I} = j $ such that $\phi_I(\xi) \in \TTI. $ Then $\m{P_I } \leq j. $ Indeed, if $\m{P_I}  >j $ then $\xi$ must belong to $T_{j+1,d}$ by Lemma \ref{max_principle} and that is a contradiction. Without loss of generality, we can assume that $\phi_I(\xi )  = (1,\dots,1)$. For $k \in I$, we can write 
$$\phi_k(\theta) = 1 +i\dss_{j\in P_k}a_{j,k}\tk  + O(\tu^2+\dots +\theta_d^2).$$
Let $\delta\in (0,1)$. Then, for all $k \in I$, if we chose $\m{\theta_i} \leq \delta$ for $i \in P_k$, we have $\m{\phi_k(\theta) -  1 } \lesssim \delta$. Thus,
$$ \{ \theta \in [-\pi,\pi)^d : \m{\theta_i} \leq \delta, \ i \in P_I\} \subset \{ \theta  \in[-\pi,\pi)^d : \m{\phi_j(\theta) - 1 } <\delta,\ j \in I\} $$ and 
$\lambda_d(\{  \theta \in [-\pi,\pi)^d : \m{\theta_i} \leq \delta, \ i \in P_I\})\gtrsim \delta^{\m{P_I}} = \delta^{\m{P_I}/\m{I}\cdot\m{I}}$. Applying Corollary \ref{coro_noncom}, we get that $C_\phi$ is not compact on any $\Abd. $\\

Now, let us show that the conditions  are sufficient. We want to apply Theorem \ref{result_gen_comp}. Let $\xi \in \ovD. $ Let  $I \subset \I{d}$ be such that $\phi_I(\xi) \in \TTI$ and $I$ is maximal with respect to this property. Denote by $p$ the cardinal of $ I $ and observe that $p \in\I{d-1} $ since $\phi(\TT^d) \cap \TT^d = \emptyset$. 
As its assumptions are satisfied, applying Theorem \ref{condition_suff_cont}, we get that $C_\phi : A^2_{d-3}(\DD^d) \rightarrow A^2_{d-3}(\DD^d)$  is bounded thus by Theorem $2.10$ of \cite{BD26}, there exists $C >0$ such  that for all $\eta \in \TTI $ and all $\ovd \in (0,1)^{\m{I}}, $
$$ \lambda_d(\lbrace \theta \in [-\pi,\pi)^d : \m{\phi_j(\theta) - \eta_j } < \delta_j, \ j\in I \rbrace ) \leq  C \dfrac{\dsp_{j\in I} \delta^{d-1}}{\dsp_{k=1}^d\wik^{d-2}} \leq C \veps(\ovd)\dfrac{\dsp_{j\in I} \delta^{2+\beta}}{\dsp_{k=1}^d\wik^{1+\beta }},$$ 
where $\veps(\ovd)  = \left( \dfrac{\dsp_{k=1}^d\wik}{\dsp_{j\in I} \delta_j}\right)^{\beta-d+3}. $ Let us show that $\veps(\ovd) \xrightarrow[\min\limits_{j\in I}\delta_j \to 0 ]{}0$. Indeed, applying Corollary \ref{lemme_comp}, we get that $\dfrac{\dsp_{k=1}^d\wik}{\dsp_{j\in I} \delta_j} \leq  \min\limits_{j\in I} \delta_j.  $ Thus, as $\beta > d-3 , $ we have $\veps(\ovd) \leq (\min\limits_{j\in I} \delta_j)^{\beta-d+3} \xrightarrow[\min\limits_{j\in I}\delta_j \to 0 ]{}0. $ 
 Therefore, we proved that for all $I \subset \I{d}$ such that $\phi_I$ touches the polycircle and $I $ is maximal with respect to this property, for  all $\veps >0, $ there exists $\gamma>0$ such that for all $\eta \in \TTI$ and all $\ovd \in (0,1)^{\m{I}}$  such that $\min\limits_{j\in I}\delta_j\leq  \gamma$, we have 
 $$ \lambda_d(\lbrace \theta \in [-\pi,\pi)^d : \m{\phi_j(\theta) -\eta_j } <\delta_j, \ j\in I \rbrace)\leq \veps \dfrac{\dsp_{j\in I }\delta_j^{\beta+2 } }{\dsp_{k =1}^d\wik^{1+\beta} }. $$
 Applying Theorem \ref{result_gen_comp}, we get that $
 C_\phi : \Abd \rightarrow \Abd $ is compact.

\end{proof}

For this theorem too, $\beta = d-3$ is optimal. Indeed consider as before 
$$\phi : \begin{array}{ccc}
\DD^d & \rightarrow & \DD^d\\
(z_1,\dots,z_d) & \mapsto & (\varphi(z_1,\dots,z_d),\dots,\varphi(z_1,\dots,z_d),0) 
\end{array}$$
with $\varphi(z_1,\dots,z_d) = z_1\cdots z_d$.
As $\phi_d = 0$, we have $\phi(\TT^d) \cap \TT^d = \emptyset$ and as $\m{\varphi(z) } =1 \Leftrightarrow \m{z_1} = \dots = \m{z_d}= 1$, we deduce that $\phi(\ovD \setminus T_{j+1,d} )\cap T_{j,d}= \emptyset$ for all $j = 1, \dots,d-1$. Thus, applying Theorem \ref{thm_comp_geom}, we get that $C_\phi$ is compact on any $\Abd$ with $\beta >d-3$.\\
Now, for $\xi  = (1,\dots,1)$ and $I = \I{d-1}$, we have $\phi_I(\xi ) \in \TTI$.  Let $\eio \in \TT^d$. Doing the linear change of variable $u_1 = \tu+\dots+\theta_d,$ $ u_2 = \theta_2,\dots, u_d = \theta_d$, we can write, for $k = 1,\dots, d-1, $ $\phi_k(u) = 1+iu_1+O(u_1^2)$. Let $\delta \in (0,1)$.
 We have 
$$ \{u \in [-\pi , \pi)^d : \m{u_1} \leq \delta \} \subset \{ u \in [-\pi,\pi)^d : \m{\phi_j(u) -1 } \leq \delta,\ j\in I \}$$ and 
$ \lambda_d(\{u \in [-\pi , \pi)^d : \m{u_1} \leq \delta \})  \gtrsim \delta = \delta^{1/(d-1)\cdot(d-1)}.$ Applying Corollary \ref{coro_compacite}, we get that $C_\phi$ is not compact on $A_{d-3}^2(\DD^d)$. \\

 The case with $d=2$ is an expansion of  Theorem $1$ of \cite{CCS25} to any $\beta >-1  $ under the additional assumption that $\phi \in \mathcal{C}^1(\ovdd)$:

\begin{corollary}
Let $\beta >-1$. Let $\phi \in \mathcal{O}(\DD^2,\DD^2) \cap  \mathcal{C}^1(\ovdd). $  Then $C_\phi : A^2_\beta(\DD^2) \rightarrow  A^2_\beta(\DD^2) $ is compact if and only if $\phi(\TT^2) \cap \TT^2  = \emptyset$ and $\phi(\ovdd \setminus \TT^2) \cap T_{1,2}= \emptyset. $
\end{corollary}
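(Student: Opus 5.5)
This is the case $d=2$ of Theorem \ref{thm_comp_geom}, since $\beta>-1=d-3$ when $d=2$. The plan is to rewrite the condition of Theorem \ref{thm_comp_geom} in dimension two and check that it is exactly the one in the statement. Theorem \ref{thm_comp_geom} says that $C_\phi$ is compact on $A^2_\beta(\DD^2)$ if and only if
\[
\forall j\in\{1,2\},\quad \phi(\ovdd\setminus T_{j+1,2})\cap T_{j,2}=\emptyset .
\]
So the whole proof reduces to identifying the sets $T_{j,2}$ and unwinding the two conditions, $j=1$ and $j=2$.

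First, identify the sets. We have $T_{3,2}=\emptyset$ and $T_{2,2}=\TT^2$. The set $T_{1,2}$ consists of the points of $\ovdd$ with at least one coordinate of modulus one, i.e. $T_{1,2}=(\TT\times\overline\DD)\cup(\overline\DD\times\TT)=b\DD^2$.

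Next, unwind the two conditions.
\begin{itemize}
\item For $j=2$: the condition reads $\phi(\ovdd\setminus\emptyset)\cap\TT^2=\emptyset$, that is, $\phi(\ovdd)\cap\TT^2=\emptyset$.
\item For $j=1$: the condition reads $\phi(\ovdd\setminus\TT^2)\cap T_{1,2}=\emptyset$.
\end{itemize}

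It remains to compare this pair with the conditions in the statement. The second condition of the statement is literally the $j=1$ condition. For $j=2$, the statement only asks that $\phi(\TT^2)\cap\TT^2=\emptyset$. So I must show that this weaker requirement, together with the $j=1$ condition, implies $\phi(\ovdd)\cap\TT^2=\emptyset$. Let $z\in\ovdd$. If $z\in\TT^2$, then $\phi(z)\notin\TT^2$ by the first condition of the statement. If $z\notin\TT^2$, then $\phi(z)\notin T_{1,2}$ by the $j=1$ condition, and since $\TT^2\subset T_{1,2}$ we get $\phi(z)\notin\TT^2$. Hence $\phi(\ovdd)\cap\TT^2=\emptyset$.

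Conversely, the conditions of Theorem \ref{thm_comp_geom} obviously imply those of the statement. So the two sets of conditions are equivalent, and the corollary follows from Theorem \ref{thm_comp_geom} applied with $d=2$. There is no real obstacle here; the only point needing attention is the small set-theoretic reduction of the $j=2$ condition carried out above.
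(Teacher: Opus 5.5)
Your proof is correct and follows the paper, which presents this corollary simply as the case $d=2$ of Theorem \ref{thm_comp_geom}, where the hypothesis $\beta>d-3$ becomes $\beta>-1$. Your explicit check covers the one step the paper leaves implicit: since $\TT^2\subset T_{1,2}$, the condition $\phi(\ovdd\setminus\TT^2)\cap T_{1,2}=\emptyset$ lets the $j=2$ condition $\phi(\ovdd)\cap\TT^2=\emptyset$ be replaced by $\phi(\TT^2)\cap\TT^2=\emptyset$.
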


\vspace{5mm}


\section{Examples}
Let us give examples illustrating our compactness results: \\

\textbf{Example 1 :} Let  $\varphi :  \mathbb{D} \rightarrow \mathbb{D} $ be defined by $\varphi(z) = \dfrac{3+6z-z^2}{8} = 1+ \dfrac{z-1}{2}-\dfrac{(z-1)^2}{8}.$ We then consider \begin{center}
\begin{tabular}{ccccc}
&$\phi : $& $\DD^3$& $\rightarrow$& $\DD^3$ \\
&&$(z_1,z_2,z_3)$&$\mapsto$ & $(\varphi(z_1)\varphi(z_2)\varphi(z_3),\varphi(z_1)\varphi(z_2)\varphi(z_3),0).$
\end{tabular}
\end{center}
As in \cite{BD26}, one can prove that $C_\phi : \Ab \rightarrow \Ab $ is compact if and only if $\beta >\dfrac{-1}{2}. $

\vspace{5mm}

\textbf{Example 2  : }For $\veps\geq 0,$  we define the map $F_\veps$ on $\CC$ by
$$F_\veps(z)=\frac{3+6z-z^2}8+2i\veps(z-1)^2-i\veps(z-1)^3.$$
It is shown in \cite{Baytridisc} that there exists $\veps_1>0$ such that $F_\veps\in\mathcal O(\DD,\DD)$ provided $\veps\in[0,\veps_1)$ and $\m{F_\veps(z)} = 1 $ if and only if $z=1$. We then consider
\begin{center}
\begin{tabular}{ccccc}
&$\phi : $& $\DD^3$& $\rightarrow$& $\DD^3$ \\
&&$(z_1,z_2,z_3)$&$\mapsto$ & $(F_0(z_1)F_b(z_2)F_0(z_3),F_0(z_1)F_b(z_2)F_c(z_3),0)$

\end{tabular}
\end{center}
with $b> 0$ and  $c> 0$ small enough. One can show that $C_\phi : \Ab \rightarrow \Ab $ is compact if and only if $\beta > \dfrac{-3}{4 }. $
\smallskip

\textbf{Example 3 : } Let  $\psi :  \mathbb{D} \rightarrow \mathbb{D} $ be defined by $\psi(z) = 1+\dfrac{z-1}{2}-\dfrac{(z-1)^2}{8}+\dfrac{3}{128}(z-1)^3.$ We then consider \begin{center}
\begin{tabular}{ccccc}
&$\phi : $& $\DD^3$& $\rightarrow$& $\DD^3$ \\
&&$(z_1,z_2,z_3)$&$\mapsto$ & $(z_1z_2\psi(z_3),z_1z_2\psi(z_3),0)$
\end{tabular}
\end{center}
Applying Theorem \ref{thm:examples}, we get that $C_\phi : \Ab \rightarrow \Ab  $ is compact if and only if $\beta > \dfrac{ -1}{6 }.$
\smallskip

\textbf{Example 4 : } Consider 
$$ \phi :  \begin{tabular}{cccc}
& $\DD^4$& $\rightarrow$& $\DD^4$ \\
&$(z_1,z_2,z_3,z_4)$&$\mapsto$ & $\left(\dfrac{1+z_1z_2}{2},\dfrac{z_1+z_2+z_3+z_4}{4},\dfrac{1}{2}\left(z_1z_2+\dfrac{1+z_3}{2}\right),0\right)$
\end{tabular}
$$
Applying Theorem \ref{thm_comp_geom}, we get that $C_ \phi : A_2^2(\DD^4) \rightarrow A_2^2(\DD^4) $ is compact.

\vspace{5mm}
\vspace{5mm}

\bibliographystyle{amsplain} 
\bibliography{ref}

\end{document}